\theoremstyle{plain}
\newtheorem{thm}{Theorem}[section]
\newtheorem{cor}[thm]{Corollary}
\newtheorem{lem}[thm]{Lemma}
\theoremstyle{definition}
\newtheorem{defn}[thm]{Definition}
\theoremstyle{remark}
\newtheorem{rem}[thm]{Remark}
\theoremstyle{plain}
\newcommand{\R}{\mathbb{R}}
\newcommand{\N}{\mathbb{N}}
\newcommand{\dimn}{\mathrm{dim}}
\newcommand{\supp}{\mathrm{supp}}
\newcommand{\scal}{\mathrm{scal}}
\newcommand{\ric}{\mathrm{Ric}}
\newcommand{\trace}{\mathrm{tr}}
\newcommand{\kernel}{\mathrm{ker}}
\newcommand{\volume}{\mathrm{vol}}
\newcommand{\mult}{\mathrm{mult}}
\newcommand{\dv}{\text{ }dV}
\newcommand{\spectrum}{\mathrm{spec}}
\newcommand{\gradient}{\mathrm{grad}}
\newcommand{\ol}{\overline}
\newcommand{\wt}{\widetilde}
\newcommand{\wh}{\widehat}
\renewcommand{\title}[1]{{\bfseries #1}\par}
\renewcommand{\author}[1]{\medskip{#1}\par\smallskip}
\newcommand{\affiliation}[1]{{\itshape #1}\par}
\newcommand{\email}[1]{E-mail:~\texttt{#1}\par}
\numberwithin{equation}{section}
\begin{document}
\begin{center}
\title{\LARGE Spectra, rigidity and stability of sine-cones}
\vspace{3mm}
\author{\Large Klaus Kröncke}
\vspace{3mm}
%\affiliation{Universität T\"{u}bingen, Fachbereich Mathematik\\Auf der Morgenstelle 10\\72076 Tübingen, Germany}
%\vspace{3mm}
\affiliation{Universität Hamburg, Fachbereich Mathematik\\Bundesstraße 55\\20146 Hamburg, Germany}

\email{klaus.kroencke@uni-hamburg.de} 
\end{center}
\vspace{2mm}
\begin{abstract}
We compute the spectra of the Laplace-Beltrami operator, the connection Laplacian on $1$-forms and the Einstein operator on symmetric $2$-tensors on the sine-cone over a positive Einstein manifold $(M,g)$. We conclude under which conditions on $(M,g)$, the sine-cone is dynamically stable under the singular Ricci-de Turck flow and rigid as a singular Einstein manifold. 
\end{abstract}
\section{Introduction and main results}
A Riemannian  manifold $(M,g)$ is called Einstein, if $\ric_g=\lambda\cdot g$ for some $\lambda\in\R$. Einstein manifolds play an important role in Riemannian geometry as well as in theoretical physics, especially if they are of special holonomy (e.g.\ Calabi-Yau).
The following two questions concerning \emph{rigidity} and \emph{stability} are central in the study of an Einstein manifold $(M,g)$:
\begin{itemize}
\item Can $g$ be deformed to another Einstein manifold which is not isometric or homothetic to $g$?
\item Is $(M,g)$ dynamically stable under the Ricci flow as a stationary point on the space of metrics modulo homotheties?
\end{itemize}
Both questions were treated extensively in the past on closed manifolds,
see \cite{Kro16} for an overview by the author with an exhaustive list of references.
% c.f. \cite{Koi78}, some work by the author \cite{Kro15b,Kro13} and references therein. 
Some more recent work has been done on noncompact manifolds e.g.
in negatively curved situations by Bamler \cite{Bam14,Bam15} and
 in the ALE setting \cite{DK20,KP20} by the author in joint work with Deruelle and Lindblad Petersen, respectively. Both questions become much more delicate in the noncompact setting, as they also depend in a subtle way on the fallof behaviour at infinity of perturbations of the metric $g$.
In the singular setting, these properties were studied only very recently in a paper by Boris Vertman and the author \cite{KV19}, where compact Ricci-flat conifolds were considered. 

As it appears in the linearization of the Einstein condition, an elliptic operator called the Einstein operator plays an important role in studying these questions. In particular, the properties of its spectrum may give an immediate answer to both questions, at least in some geometric situations. However, this operator is hard to investigate due to the complicated structure of the bundle on which it is acting. In particular, its spectrum is only known explicitly for very few examples and even achieving good eigenvalue bounds is a very hard problem. 

In this paper, we compute the spectrum of the Einstein operator on the sine-cone $(\wt{M},\wt{g})$ over a closed manifold $(M,g)$ in terms of the respective data on $(M,g)$. This class of singular manifolds is interesting in this context for various reasons. Most importantly, sine-cones appear as limits of smooth Einstein spaces.
The sine-cone over $S^n\times S^m$ is a Gromov-Hausdorff limit of smooth Einstein metrics on $S^{n+1}\times S^m$ \cite{Boh98} if $n+m\leq 9$. These are exactly the dimensions in which its Einstein operator is unbounded below. 
This suggests a deep general relation between the existence of smooth resolutions of Einsteinian sine-cones and spectral bounds on the Einstein operator. 

Sine-cones are also of great importance in theoretical physics \cite{BILPS14,GLNP11}. Moreover, we get many more examples of Einstein spaces on which spectra and eigensections are explicitly known. In fact, it seems to the best of our knowledge that this is the first paper, in which spectra of singular manifolds are computed explicitly.
\subsection{Spectra of geometric operators}

The sine-cone of a Riemannian manifold $(M,g)$ is given by
\begin{align}\label{eq:sinecone}
(\widetilde{M},\wt{g})=((0,\pi)\times M,d\theta^2+\sin(\theta)^2g).
\end{align}
It is one of the simplest examples of a Riemannian manifold with isolated conical singularities.
\noindent
On a sine-cone, we want to determine the spectrum of the following elliptic operators:
\begin{itemize}
\item The Laplace-Beltrami operator $\Delta:=\Delta_0:=-\trace{\nabla^2}:C^{\infty}(M)\to C^{\infty}(M)$, defined with the sign convention such that $\Delta f=-\trace{\nabla^2}f=-g^{ij}\nabla^2_{ij}f$.
\item The connection Laplacian $\Delta_1=-\trace{\nabla^2}:C^{\infty}(T^*M)\to C^{\infty}(T^*M)$ on $1$-forms.
\item The Einstein operator $\Delta_E=-\trace{\nabla^2}-2\mathring{R}:C^{\infty}(S^2M)\to C^{\infty}(S^2M)$ on symmetric $2$-tensors. Here, $\mathring{R}h_{ij}=g^{km}g^{ln}R_{iklj}h_{mn}$, with $R_{iklj}=g(R_{\partial_i,\partial_k}\partial_l,\partial_j)=g(\nabla^2_{\partial_i,\partial_j}\partial_k-\nabla^2_{\partial_j,\partial_i}\partial_k,\partial_l)$.
\end{itemize}
Note that the Einstein operator is closely related to the Lichnerowicz Laplacian, which is given by
\begin{align*}
\Delta_Lh:=-\trace{\nabla^2}h+\ric\circ h+h\circ \ric-2\mathring{R}.
\end{align*}
In particular, they coincide on Ricci-flat manifolds and differ only by an explicit constant on Einstein manifolds.\\
Spectral properties of self-adjoint Laplace-type operators on compact manifods with isolated conical singularities are by now well understood. In particular, if such an operator is bounded below, it admits a discrete spectrum and its normalized eigenvectors form an orthonormal basis of $L^2$. This follows from arguments as in the smooth case, using mapping properties of the resolvent and compact embedding properties of Sobolev spaces. For more background on the spectral theory of singular spaces, see e.g.\ \cite{Che83,BS87} and references therein.

From now on, we denote the spectrum of a differential operator $P$  by $\spectrum(P)$ and its positive part  by $\spectrum_+(P)=\spectrum(P)\cap(0,\infty)$. If $\lambda$ is an eigenvalue of the operator $P$, we denote the corresponding eigenspace by $E(P,\lambda)$.
\begin{thm}\label{mainthm:spectrum}Let $(M^n,g)$, $n\geq3$ be a positive Einstein manifold, scaled such that $\ric=(n-1)\cdot g$.
Given $\spectrum(\Delta_0)$, $\spectrum(\Delta_1)$ and $\spectrum(\Delta_E)$ on $(M,g)$, we explicitly compute \begin{itemize}
\item[(i)] $\spectrum(\wt{\Delta}_0)$ as a function of $\spectrum({\Delta}_0)$,
\item[(ii)] $\spectrum(\wt{\Delta}_1)$ as a function of $\spectrum({\Delta}_1)$ and $\spectrum({\Delta}_0)$,
\item[(iii)] $\spectrum(\wt{\Delta}_E)$ as a function of $\spectrum({\Delta}_E)$, $\spectrum({\Delta}_1)$ and $\spectrum({\Delta}_0)$.
\end{itemize}
\end{thm}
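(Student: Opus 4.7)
I would treat all three items by separation of variables on the warped product $\wt{g} = d\theta^2 + \sin^2(\theta)g$. Expand an eigensection of the operator on $\wt M$ in an $L^2$-orthonormal basis of eigensections of suitable transverse operators on $(M,g)$; the coefficients become functions of $\theta\in(0,\pi)$ satisfying an ODE (or small coupled ODE system). The quantisation of $\spectrum(\wt\Delta_\bullet)$ then comes from imposing the correct domain conditions at the two conical tips $\theta=0,\pi$, selecting the subspace of local solutions that extends to a section in the chosen self-adjoint (Friedrichs) extension, whose existence and discrete spectrum are guaranteed by the general theory recalled above.

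\textbf{Case (i).} Plugging $f(\theta,x) = u(\theta)\phi(x)$ with $\Delta_0\phi=\mu\phi$ into the warped-product formula for $\wt\Delta_0$ produces the scalar ODE
\begin{align*}
-u''(\theta)-n\cot(\theta)u'(\theta)+\frac{\mu}{\sin^2\theta}u(\theta)=\wt\lambda\,u(\theta).
\end{align*}
Writing $\mu=\alpha(\alpha+n-1)$ with $\alpha\geq 0$ and substituting $u=(\sin\theta)^\alpha v(\cos\theta)$ converts this into a Jacobi differential equation in $t=\cos\theta$. The Friedrichs condition at both tips selects the Jacobi polynomial solutions and yields a countable family $\wt\lambda=\wt\lambda(\mu,k)$, $k\in\N$.

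\textbf{Cases (ii) and (iii).} For $1$-forms, write $\omega = u(\theta,x)\,d\theta + \eta(\theta,x)$ with $\eta$ a $\theta$-family of $1$-forms on $M$, and invoke the Hodge-type decomposition $\eta = df + \eta^{cc}$; on an Einstein base with $\ric_g=(n-1)g$ there are no harmonic $1$-forms, so exact and coclosed $\Delta_1$-eigenforms together span $L^2$. The eigenvalue equation for $\wt\Delta_1$ decouples accordingly into a $2\times 2$ coupled ODE system for the $(u,f)$-modes (parametrised by $\spectrum(\Delta_0)$) and a scalar ODE for the coclosed modes (parametrised by $\spectrum(\Delta_1)$). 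After diagonalising and substituting as in (i), each block becomes a Jacobi equation, whose eigenvalues give (ii). For symmetric $2$-tensors, I would decompose
\begin{align*}
h = v(\theta,x)\,d\theta^2 + 2\,\omega(\theta,\cdot)\odot d\theta + k(\theta,\cdot),
\end{align*}
further split $\omega$ as in (ii) and $k$ into its trace, symmetric-gradient and transverse-traceless parts, and use the warped-product formulas for $\nabla^2$ and $\mathring R$ on $\wt M$ to derive the resulting system. The TT-block decouples, reduces to a single Jacobi-type ODE and contributes the $\spectrum(\Delta_E)$-dependent family; the remaining blocks are coupled systems of size at most four, which I would diagonalise by hand to yield Jacobi equations whose parameters are determined by the eigenvalues of $\Delta_0$ and $\Delta_1$ on $M$.

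\textbf{Main obstacle.} The hardest step will be the block-diagonalisation of the coupled ODE systems in (iii): computing the correct matrix coefficients from the warped-product curvature formulas, diagonalising each block in closed form, and verifying uniformly in the transverse eigenvalue that the Friedrichs domain at each cone tip selects exactly the Jacobi polynomial branch, with particular care at resonant values of the transverse eigenvalue where the indicial exponents at $\theta=0$ or $\theta=\pi$ could coincide and produce a spurious logarithmic solution. Only after this combined algebraic and asymptotic analysis does the closed-form description of $\spectrum(\wt\Delta_E)$ in terms of $\spectrum(\Delta_0),\spectrum(\Delta_1),\spectrum(\Delta_E)$ fall out.
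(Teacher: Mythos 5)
Your proposal is correct in spirit, but it takes a genuinely different route from the paper. You propose direct separation of variables on $(\wt M,\wt g)$: expand in transverse eigenmodes, reduce to Jacobi-type ODEs in $\theta$, and select eigenvalues by imposing Friedrichs domain conditions at the tips $\theta=0,\pi$. The paper instead introduces two auxiliary Ricci-flat manifolds, the cone $(\ol M,\ol g)=(\R_+\times M, dr^2+r^2g)$ and $(\wh M,\wh g)=(\R\times\ol M, dz^2+\ol g)$, and exploits the isometry $(\wh M,\wh g)\cong(\R_+\times\wt M, ds^2+s^2\wt g)$: thus $\wh M$ is simultaneously the cone over $\wt M$ and a cylinder over the cone $\ol M$. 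Eigensections on $(M,g)$ correspond to homogeneous harmonic sections on $\ol M$, eigensections on $(\wt M,\wt g)$ correspond to homogeneous harmonic sections on $\wh M$, and the latter are built explicitly from the former by polynomial-in-$(r,z)$ ansätze (a direct generalization of how harmonic polynomials on $\R^{n+1}$ produce spherical harmonics). The quantisation you would obtain from the Friedrichs boundary analysis is replaced in the paper by the combinatorial fact that the polynomial spaces $P_{k,j}$ drop dimension by one under $\wh\Delta$, and completeness follows from density of polynomials in $(r,z)$ rather than from a Sturm--Liouville argument.

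One imprecision in your description of (ii) and (iii): after imposing the constraints (divergence-freeness, tracelessness, orthogonality to $\partial_s$), the apparently $2\times 2$, $3\times 3$, or even $7$-component systems do not so much diagonalise as \emph{collapse} to a single effective degree of freedom. In the paper's version the constraints determine the auxiliary components ($Q,R$, or $P^{(2)},\dots,S$) algebraically/differentially from a single scalar function $P$, and the remaining equations are then automatically satisfied (Lemmas~\ref{formulas1}--\ref{formulas3} in the appendix). If you pursue your approach, you should not expect to find several independent Jacobi blocks per transverse eigenvalue; the constraint reduction is what produces exactly one $j$-indexed family of eigenvalues for each $\lambda_i$, $\mu_i$, $\kappa_i$. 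Your concern about resonant indicial roots and logarithmic solutions is a real issue in the ODE approach; the paper sidesteps it because the explicit polynomial construction never produces a logarithmic branch, but in your framework you would have to verify case by case that no resonances occur in the range of transverse eigenvalues that actually arise on a positive Einstein manifold.
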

For more details, see Theorem \ref{sinconefunctions}, Theorem \ref{sincone1forms} and Theorem \ref{Einsteinspectrumsincone} below, where the eigenvalues and their multiplicities are explicitly given.
Our main interest lies in the spectrum of the Einstein operator. However, it contains the spectrum of the other two operators introduced above, which is why we have to compute the spectrum of all three of them.
The method we develop to compute these spectra is based on the following idea: We define two auxilliary manifolds, which are given by
\begin{align}\label{olMandwhM}
(\overline{M},\ol{g})=(\R_+\times M,dr^2+r^2g),\qquad (\wh{M},\wh{g})=(\R\times \overline{M},dz^2+\ol{g}).
\end{align}
It is now important to observe that by setting $r=\sin(\theta)\cdot s$ and $z=\cos(\theta)\cdot s$, we get an isometry
\begin{align}\label{isom}
 (\wh{M},\wh{g})\cong (\R_+\times \widetilde{M},ds^2+s^2\wt{g}).
\end{align}
Thus, the two auxiliary manifolds are just the cones over $(M,g)$ and its sine-cone.
Note that if $(M^n,g)$ is Einstein with $\ric_g=(n-1)g$, then $\ric_{\wt{g}}=n\cdot \wt{g}$, while $(\overline{M},\ol{g})$ and $ (\wh{M},\wh{g})$ are both Ricci flat. 

In the  proof of the main theorem, we establish a correspondence between eigensections on $(M,g)$ and homogeneous harmonic sections on $(\overline{M},\ol{g})$ as well a between eigensections on $(\widetilde{M},\wt{g})$ and homogeneous harmonic sections on $ (\wh{M},\wh{g})$. The identification \eqref{isom} allows us to also construct all homogeneous harmonic sections on $(\wh{M},\wh{g})$ from homogeneous harmonic sections on $(\overline{M},\ol{g})$, respectively.

In the case of functions, this correspondence will be carried out quite straightforwardly to determine the Laplace spectrum of $(\wt{M},\wt{g})$ in terms of the Laplace spectrum of $(M,g)$. For $1$-forms and in particular for symmetric $2$-tensors, the proof is much more involved. In those cases, the bundle splits into several parts where for each of these parts the above correspondence between eigensections  and homogeneous harmonic sections looks  different. Especially the correspondence of between 
homogeneous harmonic sections on $(\overline{M},\ol{g})$and $(\wh{M},\wh{g})$ is quite complicated and needs a large amount of tedious calculations.

This approach could also be used to compute the spectrum of other important geometric operators on singular spaces, e.g.\ the Hodge Laplacian on differential forms, the classical Dirac operator on spinors and twised versions of it. This potentially leads to important applications in index theory and analytic torsion of singular spaces.

\subsection{Stability of sine-cones}
There are various notions of stability for an Einstein manifold. For the following definition, we introduce the spaces
\begin{align*}
g^{\perp}&=\left\{h\in C^{\infty}(S^2M)\mid \int_M \trace h\dv=0\right\},\qquad
TT&=\left\{h\in C^{\infty}(S^2M)\mid \trace h=0,\delta h=0\right\}\subset g^{\perp}.
\end{align*}
Here, $\trace$ denotes the trace and $\delta$ denotes the divergence. Elements of $g^{\perp}$ are volume-preserving perturbations of $g$. Elements in $TT$ are called transverse traceless tensors or TT-tensors for short. Both spaces are invariant under $\Delta_E$, if $g$ is an Einstein metric.
Recall that $\ric_g=(n-1)g$ implies $\ric_{\ol{g}}=0$, where  
$\ol{g}$ is the cone metric defined in \eqref{olMandwhM}. 
Via parallel transport along radial lines, we can identify the bundle restrictions $S^2\ol{M}|_{\left\{r\right\}\times M})$ with each other. Therefore, we get a
 natural identification
\begin{align*}
C^{\infty}(S^2\ol{M})\cong C^{\infty}(\R_{+},C^{\infty}(S^2\ol{M}|_{\left\{1\right\}\times M})),
\end{align*}
with respect to which the Einstein operator of $\ol{g}$ is of the form
\begin{align*}
\ol{\Delta}_E=-\partial^2_{rr}-\frac{n-1}{r}\partial_r+\frac{1}{r^2}\Box_E,\qquad
 \Box_E: C^{\infty}(S^2\ol{M}|_{\left\{1\right\}\times M})
\to C^{\infty}(S^2\ol{M}|_{\left\{1\right\}\times M}).
\end{align*}
The self-adjoint elliptic operator $\Box_E$ is called the tangential operator of the Einstein operator. There is also a bundle isomorphism
\begin{align*}
S^2\ol{M}|_{\left\{1\right\}\times M}&\cong (M\times\R)\oplus T^*M\oplus S^2M,\\
h&\mapsto ((\pi(h),h(dr,dr)),h(dr,.)|_{M},h(.,.)|_{M\times M}),
\end{align*}
where $M\times\R$ is the trivial line bundle and $\pi$ denotes the projection onto the base point. 
Therefore, $\Box_E$ can be entirely regarded as an operator on $M$, without having constructed $\ol{M}$.
\begin{defn}\label{def:stability}We call an Einstein manifold $(M^n,g)$ with 
$\ric_g=(n-1)g$
\begin{itemize}
\item[(i)] Einstein-Hilbert stable, if $\Delta_E|_{TT}\geq 0$ and strictly Einstein-Hilbert stable, if $\Delta_E|_{TT}> 0$.
\item[(ii)] linearly stable, if $\Delta_E|_{g^{\perp}\cap\delta^{-1}(0)}\geq 0$ and strictly linearly stable, if $\Delta_E|_{g^{\perp}\cap\delta^{-1}(0)}> 0$.
\item[(iii)] tangentially stable, if $\Box_E\geq 0$ and  strictly tangentially stable, if $\Box_E> 0$.
\item[(iv)] physically stable, if $\Delta_E|_{TT}\geq-\frac{1}{4}(n-1)^2$.
\end{itemize}
\end{defn}

\begin{rem}
Let us give a brief motivation of these definitions: 
\begin{itemize}
\item[(i)]The notion of Einstein-Hilbert stability is coming from the variational theory of the Einstein-Hilbert functional. In fact, the second variation of the Einstein-Hilbert functional  $S$ restricted to $TT$-tensors is given by
\begin{align*}
\frac{d^2}{dt^2}|_{t=0}S(g+th)=-\frac{1}{2}\int_M \langle \Delta_Eh,h\rangle \dv,
\end{align*}
see \cite[Theorem 4.60]{Bes08}.
From now on, we will abbreviate this notion by (strict) EH-stability for convenience. 
\item[(ii)] The notion of  linear stability comes from the variational theory of Perelman's shrinker entropy $\nu_{-}$ and is more convenient in the context of Ricci flow. The second variation of $\nu_{-}$ restricted to $g^{\perp}\cap\ker(\delta)$ is given by
\begin{align*}
\frac{d^2}{dt^2}|_{t=0}\nu(g+th)=-\frac{1}{4(n-1)\volume(M)}\int_M \langle \Delta_Eh,h\rangle \dv,
\end{align*}
see \cite[Proposition 8.2]{Kro13}.
The restriction to $g^{\perp}\cap\ker(\delta)$ means that we only
consider essential deformations orthogonal to rescalings and pullbacks along diffeomorphisms.
\item[(iii)] In \cite{Ver16}, Vertman introduced a Ricci-de Turck flow on compact manifolds with isolated conical singularities that preserves the singularities along the flow. In order to get a well-posed initial value problem, Vertman assumed the singularities to be modelled over Ricci-flat cones, which are either strictly tangentially stable or flat orbifolds.
\item[(iv)] The spectral bound defining physical stability appears as a stability condition in the study of supersymmetric backgrounds, e.g.\ AdS-product spacetimes and higher-dimensional black holes \cite[Section II B and C]{GHP03}. 
Moreover, the author showed in \cite[Theorem 1.1]{Kro15c} that a Ricci flat cone $(\ol{M},\ol{g})$ over $(M,g)$ is linearly stable if and only if $(M,g)$ is physically stable. In all cases, the different stability conditions are related via the Hardy inequality and it directly appears in the proof of \cite[Theorem 1.1]{Kro15c}.
% A violation of this bound yields gravitational perturbations which is bounded in space but unbounded in time.
%is known as the Breitenloehner-Freedman bound in supergravity theory \cite{BF82}. It is also relevant for the stability
\end{itemize}
\end{rem}
\begin{rem}\label{rem:stability}
We have the following equivalences, see e.g.\ \cite[Corollary 1.3]{CH15} and \cite[Theorem 1.3]{KV19}:
\begin{itemize}
\item[(i)](strict) Linear stability $\Leftrightarrow$ [(strict) EH-stability $\wedge$ $\spectrum_+(\Delta)\geq  2(n-1)$ (resp.\ $> 2(n-1)$)].
\item[(ii)](strict) tangential stability $\Leftrightarrow$ [(strict) EH-stability $\wedge$ $\spectrum_+(\Delta)\geq 2(n+1)$ (resp.\ $> 2(n+1)$)].
\end{itemize}
\end{rem}
Using these definitions, we conclude the following from Theorem \ref{mainthm:spectrum}:
\begin{thm}\label{mainthm:linstability}
Let $(M^n,g)$, $n\geq3$ be a closed Einstein manifold with Einstein constant $n-1$ and $(\wt{M},\wt{g})$ be its sine-cone. Then the following equivalences do hold:
\begin{itemize}
\item[(i)] $(\wt{M},\wt{g})$ (strictly) EH-stable $\Leftrightarrow$  $(M,g)$ is (strictly) EH-stable.
\item[(ii)] $(\wt{M},\wt{g})$ (strictly) linearly stable $\Leftrightarrow$
$(M,g)$ (strictly) linearly stable and the Laplace spectrum satisfies
 $\spectrum_+(\Delta)\geq 2n-\frac{n}{2}(\sqrt{1+\frac{8}{n}}-1)$  (resp.\ $\spectrum_+(\Delta)>2n-\frac{n}{2}(\sqrt{1+\frac{8}{n}}-1)$).
 \item[(iii)] $(\wt{M},\wt{g})$ (strictly) tangentially stable $\Leftrightarrow$  $(M,g)$ is (strictly) tangentially stable.
 \item[(iv)] $\wt{\Delta}_E$ is bounded below if and only if  $(M,g)$ is physically stable. In this case, $(\wt{M},\wt{g})$ is also physically stable.
\end{itemize}
	\end{thm}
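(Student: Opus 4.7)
The plan is to derive each of the four equivalences from the explicit spectra in Theorem \ref{mainthm:spectrum} (and its refined versions Theorems \ref{sinconefunctions}--\ref{Einsteinspectrumsincone}), combined with Definition \ref{def:stability}. For each item the central task is to identify, inside the eigensection list produced by the spectrum formulas, precisely those modes that lie in the subspace relevant to the given stability notion, and then to read off the corresponding bound on the spectral data of $(M,g)$. A crucial caveat is that Remark \ref{rem:stability} cannot be invoked verbatim on the conical space $\wt{M}$: for example $\cos\theta$ is an eigenfunction of $\wt{\Delta}_0$ with eigenvalue $n+1<2n$ for $n\geq 2$, which would naively obstruct linear stability, but the associated variation $\cos\theta\cdot\wt{g}$ fails to be divergence-free and hence does not enter the criterion.

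For (i), I restrict $\spectrum(\wt{\Delta}_E)$ to $TT(\wt{M})$ along the bundle splitting $S^2\wt{M}|_{\{\theta\}\times M}\cong\mathbb{R}\,d\theta^2\oplus d\theta\cdot T^*M\oplus S^2M$ underlying the proof of Theorem \ref{mainthm:spectrum}(iii). The TT-condition couples the three components and cuts out a subspace whose $\wt{\Delta}_E$-spectrum decomposes into one sector obtained from $\spectrum(\Delta_E^M|_{TT(M)})$ by adding non-negative radial separation constants, plus further sectors built from $\spectrum(\Delta_1^M)$ and $\spectrum(\Delta_0^M)$ that contribute only non-negative eigenvalues. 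Hence $\wt{\Delta}_E|_{TT(\wt{M})}\geq 0\Leftrightarrow\Delta_E^M|_{TT(M)}\geq 0$, giving (i). The same analysis yields the physical-stability implication in (iv): $\Delta_E^M|_{TT}\geq-(n-1)^2/4$ forces $\wt{\Delta}_E|_{TT}\geq-(n-1)^2/4\geq-n^2/4$. The boundedness-below equivalence in (iv) follows by analysing $\wt{\Delta}_E$ near each conical singularity $\theta\in\{0,\pi\}$, where it is modelled on the Einstein operator of the Ricci-flat cone $\ol{M}$; the Hardy inequality at the cone tip reduces boundedness below to a spectral condition on the tangential operator, which by the reasoning behind \cite[Theorem 1.1]{Kro15c} is equivalent to physical stability of $(M,g)$.

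For (iii), I use the isometry $\wh{M}\cong\R\times\ol{M}$ from \eqref{olMandwhM}--\eqref{isom} to write $\wh{\Delta}_E=-\partial^2_{zz}+\ol{\Delta}_E$. The radial tangential operators of the two cone structures on $\wh{M}$, namely $\wt{\Box}_E$ (from $\wh{M}=\R_+\times\wt{M}$) and $\Box_E$ (from $\ol{M}=\R_+\times M$, lifted trivially in $z$), are two separations of the same ambient operator $\wh{\Delta}_E$. The coordinate change $r=s\sin\theta$, $z=s\cos\theta$ sets up a bijection between their eigenmodes and transfers non-negativity in both directions, giving $\wt{\Box}_E\geq 0\Leftrightarrow\Box_E\geq 0$, which is (iii) by definition.

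Item (ii) is the most delicate and will be the main obstacle. I plan to decompose $g^\perp\cap\delta^{-1}(0)\subset\Gamma(S^2\wt{M})$ as $TT(\wt{M})$ plus an \emph{essential conformal} sector parameterised by eigenfunctions of $\Delta_0^M$ with $\mu>0$; the purely $\theta$-dependent candidates (such as $\cos\theta\cdot\wt{g}$) fail $\delta=0$ and are discarded. Via the spectrum formula in Theorem \ref{mainthm:spectrum}(i), $\wt{\Delta}_E$ acts on the essential conformal sector with eigenvalues of the form $\alpha(\alpha+n)$, where $\alpha=\frac{-(n-1)+\sqrt{(n-1)^2+4\mu}}{2}$ is determined by $\alpha(\alpha+n-1)=\mu$. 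Solving $\alpha(\alpha+n)\geq 2n$ in $\mu$ yields the threshold $\mu\geq 2n-\frac{n}{2}(\sqrt{1+\frac{8}{n}}-1)$ claimed in the theorem. The difficult step is the first one: identifying, inside the intricate tensor/form/function splitting of Theorem \ref{Einsteinspectrumsincone}, precisely the subspace $g^\perp\cap\delta^{-1}(0)$ as the direct sum of $TT(\wt{M})$ and the essential conformal sector described above, with no additional contributions. Once this is done, combining with (i) completes (ii).
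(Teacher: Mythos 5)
Your approach to (i) and (iv) is essentially the paper's: read off the $TT$-spectrum of $\wt{\Delta}_E$ from Theorem \ref{Einsteinspectrumsincone}, observe that the sectors arising from $\spectrum(\Delta_0)$ and $\spectrum(\Delta_1)$ contribute only positive eigenvalues (since $\lambda_i,\mu_i>0$ for $i\geq1$), and reduce everything to the $\kappa$-sector. One quantitative slip in (iv): from $\kappa_i\geq-\tfrac{(n-1)^2}{4}$ one gets $\xi_n(\kappa_i)+j\geq-\tfrac{n-1}{2}$ and hence $\wt{\kappa}^{(1)}_{i,j}=\eta_{n+1}(\xi_n(\kappa_i)+j)\geq\eta_{n+1}\bigl(-\tfrac{n-1}{2}\bigr)=-\tfrac{n^2-1}{4}$, not $\geq-\tfrac{(n-1)^2}{4}$ as you wrote; the correct bound is weaker but still $>-\tfrac{n^2}{4}$, so your conclusion stands. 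For the unboundedness direction you should explicitly invoke the Hardy-inequality mechanism of Lemma \ref{unbounded} (which the paper does) rather than only gesturing at the cone tips.

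For (ii) you take a genuinely different, and in one respect more careful, route than the paper. The paper simply invokes Remark \ref{rem:stability}(i) on the conical space $\wt{M}$ and translates $\spectrum_+(\wt{\Delta}_0)\geq 2n$ to a condition on $M$ via Theorem \ref{sinconefunctions}. Taken literally this runs into exactly the issue you flag: $n+1\in\spectrum_+(\wt{\Delta}_0)$ always (eigenfunction $\cos\theta$, Remark \ref{conformalkilling}(i)), and $n+1<2n$ for $n\geq 2$, so the criterion would never hold. The resolution is precisely what you observe: since $\wt{\nabla}^2\cos\theta=-\cos\theta\,\wt{g}$, the second component $n\wt{\nabla}^2v+\wt{\Delta}v\cdot\wt{g}$ vanishes identically for $v=\cos\theta$, and the remaining piece $\cos\theta\cdot\wt{g}$ is not divergence-free, so the $n+1$ mode contributes nothing to $\wt{g}^\perp\cap\ker\wt{\delta}$. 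Your direct analysis — decomposing $\wt{g}^\perp\cap\ker\wt{\delta}$ into $TT(\wt{M})$ plus a one-dimensional conformal sector per non-Obata eigenvalue, using that $\wt{\Delta}_E$ acts on both $v\wt{g}$ and $n\wt{\nabla}^2v+\wt{\Delta}v\,\wt{g}$ by the same scalar $\wt{\lambda}-2n$ (from the commutation rules \eqref{commutation}), and solving $\alpha(\alpha+n)\geq 2n$ with $\alpha(\alpha+n-1)=\lambda$ — does produce the stated threshold $2n-\tfrac{n}{2}(\sqrt{1+8/n}-1)$ and makes the Obata exclusion transparent. This buys you a self-contained argument that does not depend on Remark \ref{rem:stability} holding verbatim in the singular setting.

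Your treatment of (iii) is the weak point. The claim that the two separations of $\wh{\Delta}_E$ on $\wh{M}$ — one via $\wh{M}=\R_+\times\wt{M}$, giving $\wt{\Box}_E$, the other via $\wh{M}\cong\R\times\ol{M}$ with $\ol{M}=\R_+\times M$, giving $\Box_E$ — set up a bijection between eigenmodes that transfers non-negativity is not a proof. The two tangential operators live on entirely different bundles over different bases ($\wt{M}$ versus $M$), and they encode the indicial structure of $\wh{\Delta}_E$ at different strata: $\wt{\Box}_E$ at the cone tip $\{s=0\}$, $\Box_E$ along the axis $\{r=0\}\times\R$. The coordinate change $r=s\sin\theta$, $z=s\cos\theta$ does not identify these strata or their links, so there is no automatic transfer of spectral positivity. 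The paper's route — apply Remark \ref{rem:stability}(ii) on both $M$ and $\wt{M}$ (with the same implicit exclusion of the Obata eigenvalue), and then use Theorem \ref{sinconefunctions} to check that $\spectrum_+(\wt{\Delta}_0)\geq 2(n+2)$ modulo the excluded mode is equivalent to $\spectrum_+(\Delta_0)\geq 2(n+1)$ — is the one that actually closes, since $\eta_{n+1}(\xi_n(\lambda_1))\geq 2(n+2)\Leftrightarrow\xi_n(\lambda_1)\geq 2\Leftrightarrow\lambda_1\geq 2(n+1)$, and the remaining modes $\eta_{n+1}(j)$ for $j\geq 2$ are automatically $\geq 2(n+2)$. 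You should either adopt this route or, if you want to avoid Remark \ref{rem:stability} on the singular space, run the same kind of direct sector-by-sector analysis on $\Box_E$ that you propose for (ii) rather than rely on the double-cone heuristic.
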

	In particular, Theorem \ref{mainthm:linstability} improves \cite[Theorem 1.4]{Kro18} and closes the gap pointed out in Remark 1.5 in the same paper.
\begin{cor}\label{maincor:stability}
Let $(M^n,g)=(M_1\times M_2,g_1+g_2)$, $n\geq4$ be a product manifold with $\ric_g=(n-1)g$. Then, $(\wt{M},\wt{g})$ is EH unstable and hence also linearly and tangentially unstable by Remark \ref{rem:stability}. If $(M_i,g_i)$ are both linearly stable, $\wt{\Delta}_E$ is unbounded below if and only if $n\leq 8$.
\end{cor}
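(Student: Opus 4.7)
The plan is to exhibit the parallel TT-tensor $h_0 := n_2 g_1 - n_1 g_2$ (with $n_i = \dim M_i$) on $M = M_1 \times M_2$ and use Theorem \ref{mainthm:linstability} to extract all claimed (in)stabilities from its $\Delta_E$-eigenvalue.

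Since each $g_i$ is parallel for the product connection, so is $h_0$, giving $h_0\in TT$; tracelessness follows from $n_1 n_2 - n_2 n_1 = 0$. From $\ric_g = (n-1)g$ together with the block-diagonality of the Ricci tensor, each factor satisfies $\ric_{g_i} = (n-1)g_i$, so the standard identity $\mathring R g_i = \ric_{g_i}$ yields $\Delta_E h_0 = -2 \mathring R h_0 = -2(n-1) h_0$. This negative TT-eigenvalue proves $(M,g)$ is EH-unstable, hence so is $(\wt M, \wt g)$ by Theorem \ref{mainthm:linstability}(i); Remark \ref{rem:stability} then gives linear and tangential instability. Moreover, Theorem \ref{mainthm:linstability}(iv) reduces boundedness below of $\wt\Delta_E$ to physical stability of $(M,g)$, i.e.\ $\Delta_E^M|_{TT}\geq -\tfrac{1}{4}(n-1)^2$. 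The eigenvalue $-2(n-1)$ of $h_0$ violates this bound exactly when $n-1<8$, which proves the ``if'' direction of the dichotomy.

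For the converse, assume both factors are linearly stable and $n\geq 9$; I plan to verify $\Delta_E^M|_{TT_M}\geq -\tfrac14(n-1)^2$ via the $\Delta_E$-invariant $L^2$-orthogonal product splitting
\[
S^2T^*M \cong \pi_1^* S^2T^*M_1 \oplus (\pi_1^*T^*M_1 \otimes \pi_2^*T^*M_2) \oplus \pi_2^* S^2T^*M_2,
\]
inherited from block-diagonality of the Levi-Civita connection and the Riemann tensor. Writing $h = h_{11}+h_{12}+h_{22}$, the off-diagonal block satisfies $\mathring R h_{12}=0$, so $\Delta_E h_{12}=-\trace\nabla^2 h_{12}\geq 0$; while on each diagonal block a direct separation-of-variables computation gives $\Delta_E^M h_{ii} = \Delta_E^{M_i} h_{ii} + \Delta_0^{M_j} h_{ii}$ (scalar Laplacian on the other factor acting componentwise), and hence $\langle\Delta_E^M h_{ii},h_{ii}\rangle \geq \lambda_{\min}(\Delta_E^{M_i})\cdot\|h_{ii}\|^2$. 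The key ingredient is the spectral bound $\lambda_{\min}(\Delta_E^{M_i})\geq -2(n-1)$ on the whole of $S^2T^*M_i$: via the Berger--Ebin decomposition $h = h^{TT}+\delta^{*}X+\phi g_i$ (preserved by $\Delta_E$ on Einstein manifolds), EH-stability of $(M_i,g_i)$ handles the TT-part, the trace-part yields Rayleigh quotient $\|d\phi\|^2/\|\phi\|^2 - 2(n-1)\geq -2(n-1)$, and the gauge-part is controlled by the intertwining $\Delta_L\circ\delta^{*} = \delta^{*}\circ\Delta_H$ on Einstein manifolds together with the spectrum bound on $\Delta_H$ on $1$-forms produced by linear stability of $(M_i,g_i)$. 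Summing the block contributions then gives $\langle\Delta_E h,h\rangle\geq -2(n-1)\|h\|^2 \geq -\tfrac{1}{4}(n-1)^2 \|h\|^2$ for $n\geq 9$, proving physical stability of $(M,g)$.

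The main obstacle is the gauge-part estimate in Step 3: converting linear stability of each factor into the sharp bound $\Delta_E^{M_i}|_{\{\delta^{*}X\}}\geq -2(n-1)$ requires careful use of the commutator identities between $\Delta_L$ and $\delta^{*}$ on Einstein manifolds. The sharpness of the $-2(n-1)$ bound (attained on constant multiples of $g_i$ in the trace piece) aligns exactly with the threshold coming from $h_0$, which is what forces the critical dimension $n=9$ to appear in the statement.
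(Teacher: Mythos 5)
Your proof is correct and follows the same strategy as the paper: exhibit the TT-tensor $h_0 = n_2 g_1 - n_1 g_2$ with $\Delta_E h_0 = -2(n-1)h_0$, deduce EH-instability (hence linear and tangential instability) of the sine-cone via Theorem~\ref{mainthm:linstability}(i), and reduce boundedness below of $\wt\Delta_E$ to physical stability of $(M,g)$ via Theorem~\ref{mainthm:linstability}(iv), which turns on comparing $-2(n-1)$ with $-\tfrac14(n-1)^2$. The only place you diverge from the paper is in the converse direction: the paper simply cites \cite[Proposition~4.8]{Kro15b} for the fact that under linear stability of both factors, $-2(n-1)$ is the infimum of $\spectrum(\Delta_E|_{TT(M)})$, whereas you re-derive that bound from scratch. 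Your re-derivation is sound: the bundle splitting $\pi_1^* S^2 T^*M_1 \oplus (\pi_1^*T^*M_1 \otimes \pi_2^*T^*M_2) \oplus \pi_2^* S^2 T^*M_2$ is indeed $\Delta_E$-invariant for a Riemannian product, the cross-factor rough Laplacian contributes non-negatively, $\mathring{R}$ kills the mixed block, and the per-factor bound $\Delta_E^{M_i}\geq -2(n-1)$ on all of $S^2T^*M_i$ does follow from linear stability of $(M_i,g_i)$. One small clarification you should make in Step~3: in the gauge piece, the part $\delta^*\omega$ with $\omega$ divergence-free automatically satisfies $\Delta_E^{M_i}\delta^*\omega = \delta^*(\Delta_1-(n-1))\omega\geq 0$ by Remark~\ref{Killing} (no stability hypothesis needed), while the exact part $\nabla^2 v$ has eigenvalues $\lambda'-2(n-1)$ and it is here that linear stability enters via $\spectrum_+(\Delta^{M_i})\geq 2(n-1)$ (Remark~\ref{rem:stability}(i)); the "intertwining with $\Delta_H$" you gesture at is not what supplies the bound. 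With that adjustment the argument closes, and it amounts to a self-contained proof of the input the paper delegates to \cite[Proposition~4.8]{Kro15b}.
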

Our next result considers dynamical stability of sine-cones under the singular Ricci flow mentioned above.
\begin{thm}\label{mainthm:stability}
Let $(M^n,g)$, $n\geq3$ be a closed Einstein manifold with Einstein constant $n-1$, which is strictly tangentially stable. Then, its sine-cone $(\wt{M},\wt{g})$ is dynamically stable under the volume-normalized Ricci-de Turck flow preserving isolated conical singulaties.
\end{thm}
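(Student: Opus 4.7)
The plan is to combine the spectral computation of Theorem \ref{mainthm:spectrum} with Vertman's singular Ricci--de Turck flow of \cite{Ver16} and a Kr\"oncke--Vertman-style dynamical stability argument in the spirit of \cite{KV19}, adapted to the positive Einstein setting via volume normalization.

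First I would check that the flow is well-posed at $\wt g$. The sine-cone has exactly two isolated conical singularities, at $\theta=0$ and $\theta=\pi$, whose tangent cones are both isometric to the Ricci-flat cone $(\ol M,\ol g)$ over $(M,g)$. The framework of \cite{Ver16} requires each model cone to be either a flat orbifold or strictly tangentially stable in the sense that $\Box_E>0$ on the link, and this is precisely the hypothesis on $(M,g)$. Hence Vertman's singular Ricci--de Turck flow preserving the isolated conical singularities is defined in a neighborhood of $\wt g$.

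Next I would transfer strict tangential stability up to the sine-cone. By Theorem \ref{mainthm:linstability}(iii), $(\wt M,\wt g)$ is itself strictly tangentially stable, and Remark \ref{rem:stability}(ii) applied in dimension $n+1$ gives strict EH-stability together with $\spectrum_+(\wt\Delta_0)>2(n+2)$; in particular $\wt\Delta_E|_{TT}>0$ with a genuine spectral gap. After DeTurck gauge-fixing, the linearization of the volume-normalized flow at $\wt g$ is, on the TT direction, a shift of $\wt\Delta_E$ that inherits strict positivity from this gap, yielding exponential decay of the linear semigroup on the weighted H\"older spaces adapted to conical singularities. The nonlinear Ricci--de Turck remainder being quadratic in the perturbation, a contraction-mapping or Lojasiewicz--Simon argument in these spaces then converges the flow to a metric in the gauge orbit of $\wt g$, while the singular model, itself strictly tangentially stable, is preserved along the flow by the setup of \cite{Ver16}.

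The main obstacle will be the translation of the Ricci-flat conifold machinery of \cite{KV19} to the positive Einstein, volume-normalized setting: one must re-derive the resolvent and semigroup estimates for the shifted Einstein operator on the conifold, check that volume normalization is compatible with the DeTurck gauge that preserves the conical structure, and verify the quadratic estimates for the nonlinearity on the weighted H\"older spaces. Once this functional-analytic framework is in place, the spectral gap supplied by Theorem \ref{mainthm:linstability}(iii), together with the well-posedness secured in the first step, closes the stability argument.
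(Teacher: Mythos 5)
Your proposal follows essentially the same route as the paper. The paper packages the general convergence argument into Theorem \ref{dynstability} (dynamical stability of strictly linearly stable compact Einstein conifolds with strictly tangentially stable or orbifold singularities) and then merely verifies the hypotheses: the two conical singularities of $(\wt{M},\wt{g})$ are modelled on the Ricci-flat cone over $(M,g)$, which is strictly tangentially stable by assumption, so Vertman's flow is well-posed; Theorem \ref{mainthm:linstability}(iii) lifts strict tangential stability to $(\wt{M},\wt{g})$; and Remark \ref{rem:stability} downgrades this to strict linear stability. Your sketch reproduces exactly these ingredients (well-posedness via the link's tangential stability, the spectral gap from Theorem \ref{mainthm:linstability}(iii) and Remark \ref{rem:stability}, and a semigroup/contraction-type convergence argument in the spirit of \cite{KV18}), so the two proofs are mathematically equivalent. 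One small remark: for the convergence argument the hypothesis that actually gets used is strict linear stability, i.e.\ positivity of $\wt{\Delta}_E$ on $\wt{g}^{\perp}\cap\ker\wt{\delta}$ (equivalently, strict EH-stability together with the lower bound $\spectrum_+(\wt{\Delta}_0)>2n$), not just $\wt{\Delta}_E|_{TT}>0$; you do in fact supply the Laplace-spectrum bound, so the logic is sound, but framing the linearization as acting ``on the TT direction'' alone understates what must be controlled, since the volume-preserving gauge-fixed perturbations also include a Hessian-type piece coming from functions.
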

In \cite[Theorem 1.4]{KV18}, the strictly tangentially stable symmetric spaces of compact type were classified. In combination with Theorem \ref{mainthm:stability}, this yields the following examples:
\begin{cor}
	Let $(M^n,g)$, $n\geq 3$ be a closed Einstein manifold with constant $(n-1)$, 
	which is a symmetric space of compact type. If it is a simple Lie group $G$, 
	its sine-cone is dynamically stable under the normalized singular Ricci de Turck flow, if $G$ is one of the following spaces:
	\begin{align}
	\mathrm{Spin}(p)\text{ }(p\geq 6,p\neq 7),\qquad \mathrm{E}_6,
	\qquad\mathrm{E}_7,\qquad\mathrm{E}_8,\qquad \mathrm{F}_4.
	\end{align}
	If $(M,g)$ is a rank-$1$ symmetric space of compact type $G/K$, 
	 its sine-cone is dynamically stable under the normalized singular Ricci de Turck flow, if  $G/K$ is one of the following real Grasmannians
	\begin{equation}
	\begin{aligned}
	&\frac{\mathrm{SO}(2q+2p+1)}{\mathrm{SO}(2q+1)\times \mathrm{SO}(2p)}\text{ }(p\geq 2,q\geq 1),\qquad
	\frac{\mathrm{SO}(8)}{\mathrm{SO}(5)\times\mathrm{SO}(3)},\\
	&\frac{\mathrm{SO}(2p)}{\mathrm{SO}(p)\times \mathrm{SO}(p)}\text{ }(p\geq 4),\qquad
	\frac{\mathrm{SO}(2p+2)}{\mathrm{SO}(p+2)\times \mathrm{SO}(p)}\text{ }(p\geq 4),\\
	&\frac{\mathrm{SO}(2p)}{\mathrm{SO}(2p-q)\times \mathrm{SO}(q)}\text{ }(p-2\geq q\geq 3),
	\end{aligned}
	\end{equation}
	or one of the following spaces:
	\begin{equation}
	\begin{aligned}
	\mathrm{SU}(2p)/\mathrm{SO}(p)\text{ }(n\geq 6),\qquad
	&\mathrm{E}_6/[\mathrm{Sp}(4)/\left\{\pm I\right\}],\qquad \quad
	\mathrm{E}_6/\mathrm{SU}(2)\cdot \mathrm{SU}(6),\\
	\mathrm{E}_7/[\mathrm{SU}(8)/\left\{\pm I\right\}],\qquad&
	\mathrm{E}_7/\mathrm{SO}(12)\cdot\mathrm{SU}(2),\qquad
	\mathrm{E}_8/\mathrm{SO}(16),\\
	\mathrm{E}_8/\mathrm{E}_7\cdot \mathrm{SU}(2),\qquad&
	\mathrm{F}_4/Sp(3)\cdot\mathrm{SU}(2).
	\end{aligned}
	\end{equation}
	\end{cor}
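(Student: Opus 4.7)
The plan is to obtain this corollary as an immediate application of Theorem \ref{mainthm:stability} to the list of strictly tangentially stable symmetric spaces supplied by \cite[Theorem 1.4]{KV18}. Concretely, Theorem \ref{mainthm:stability} reduces dynamical stability of the sine-cone $(\wt{M},\wt{g})$ under the volume-normalized singular Ricci-de Turck flow to a single algebraic-spectral condition on the base: strict tangential stability of $(M,g)$, i.e.\ $\Box_E>0$ on $C^{\infty}(S^2\ol{M}|_{\{1\}\times M})$. Once that reduction has been established, there is nothing geometric left to prove for the corollary; it becomes a purely representation-theoretic bookkeeping statement.

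Thus the first step of my proof would be to recall Theorem \ref{mainthm:stability} and fix the normalization of the Einstein constant to $n-1$, so that it applies verbatim. The second step is to recall the classification result of \cite[Theorem 1.4]{KV18}, which lists precisely those closed symmetric spaces of compact type, properly rescaled, for which $\Box_E$ is positive. This list is obtained there via Freudenthal-style harmonic analysis on $G/K$: $\Box_E$ decomposes along the Peter-Weyl decomposition of $C^{\infty}(S^2M)$ into finite-dimensional blocks labeled by irreducible $G$-representations, on each of which Casimir-type computations give explicit eigenvalues, and the authors then check case by case whether the resulting numbers stay positive.

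The third step is then to simply match the two lists: the simple Lie groups $\mathrm{Spin}(p)$ ($p\geq 6,p\neq 7)$, $\mathrm{E}_6,\mathrm{E}_7,\mathrm{E}_8,\mathrm{F}_4$ and the rank-one-type symmetric spaces displayed in the corollary are exactly the entries appearing in \cite[Theorem 1.4]{KV18} as being strictly tangentially stable. Applying Theorem \ref{mainthm:stability} to each of them yields dynamical stability of the corresponding sine-cone, which is the claim.

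The proof therefore has essentially no obstacle of its own: all technical work is packaged into Theorem \ref{mainthm:stability} (which in turn relies on the spectral computation in Theorem \ref{mainthm:spectrum} and on Vertman's analytic framework for the singular Ricci-de Turck flow) and into the classification of \cite[Theorem 1.4]{KV18}. The only mild point requiring care is consistency of normalizations: one must verify that the scaling convention $\ric_g=(n-1)g$ used here agrees with the scaling under which $\Box_E>0$ is asserted in \cite{KV18}, since tangential stability is a scale-invariant property of $(M,g)$ but the Einstein constant is not. This is a routine check and does not affect the list.
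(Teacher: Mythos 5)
Your proposal matches the paper's own (essentially one-line) argument exactly: the corollary is stated immediately after the observation that Theorem \ref{mainthm:stability} combined with the classification of strictly tangentially stable compact-type symmetric spaces in \cite[Theorem 1.4]{KV18} yields the list, and no further proof is given in the paper. Your additional remark about checking that the normalization $\ric_g=(n-1)g$ is compatible with the scale-invariant notion of tangential stability in \cite{KV18} is a sensible, if routine, sanity check and does not change the argument.
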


\subsection{Rigidity of sine-cones}
Let $\lambda\in\R$. Then the linearization of the map $\Phi:g\mapsto \ric_g-\lambda\cdot g$ at its zeros is given by
\begin{align*}
d_g\Phi(h)=\frac{1}{2}\Delta_Eh-\delta^*(\delta h+\frac{1}{2}\nabla\trace h)
=\frac{1}{2}\Delta_Eh-\frac{1}{2}L_{\delta h+\frac{1}{2}\nabla\trace h}g,
\end{align*}
see e.g.\ \cite[Theorem 1.174]{Bes08}. In particular, the linearization of the Einstein condition is given by the Einstein operator, up to a constant and a gauge term. If we additionally assume that $\delta h=0$ (i.e.\ $h$ is orthogonal to the orbit of the diffeomorphism group), we can deduce in the smooth case that $h$ is a $TT$-tensor (c.f.  \cite[Theorem 12.30]{Bes08}).
 This motivates the following definition.
\begin{defn}
Let $(M,g)$ be an Einstein manifold. Then an element $h\in\ker(\Delta_E|_{TT})\cap L^2$ is called an infinitesimal Einstein deformation (or IED for short).
\end{defn}
Note that on smooth Einstein manifolds, any IED is globally smooth and thus bounded. On manifolds with isolated conical singularities, an IED is is still smooth by local elliptic regularity. However, it won't necessarily be bounded towards the singularity. Instead, elliptic regularity of weighted spaces (see e.g.\ \cite[Section 9]{Pac13}) shows that $|\nabla^kh|=\mathcal{O}(r^{\alpha -k})$ for some $\alpha>- \frac{n-2}{2}$ and $k\in\N_0$, as $r\to0$. Here, $r$ denotes the distance function from a singular point.

It is well known that for closed Einstein manifolds $(M,g)$, the condition $\ker(\Delta_E|_{TT})=\left\{0\right\}$ implies that $g$ is an isolated point in the moduli space of Einstein metric (c.f.\ the discussion in \cite[Chapter 12]{Bes08}. For noncompact and singular manifolds, this discussion is much more subtle and depends on the behaviour of $h$ at infinity and close to the singularities, respectively. Our next theorem characterizes the existence of IED's on sine-cones.
\begin{thm}\label{mainthm:rigidity}
Let $(M^n,g)$, $n\geq3$ be a closed Einstein manifold with Einstein constant $n-1$ and $(\wt{M},\wt{g})$ be its sine cone. Then the following assertions do hold:
\begin{itemize}
\item[(i)] $(\wt{M},\wt{g})$ admits bounded IED's if and only if $(M,g)$ admits bounded IED's. 
In particular, if $(M,g)$ does not admit bounded IED's, $\wt{g}$ can not be deformed to a family of Einstein metrics $\wt{g}_t$ of the same Einstein constant in such a way that the gauge condition $\delta(\frac{d}{dt}\wt{g}_t|_{t=0})=0$ holds. 
\item[(ii)] If $(M,g)$ is strictly EH-stable, $(\wt{M},\wt{g})$ does not admit ($L^2$-)IED's.
\end{itemize}
	\end{thm}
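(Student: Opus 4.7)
The plan is to combine the explicit description of $\spectrum(\wt{\Delta}_E)$ and its eigensections provided by Theorem \ref{mainthm:spectrum}(iii) with an indicial-root analysis at the two singular points $\theta=0$ and $\theta=\pi$ of the sine-cone. Part (ii) I would dispose of immediately: by Theorem \ref{mainthm:linstability}(i), strict EH-stability of $(M,g)$ is equivalent to strict EH-stability of $(\wt{M},\wt{g})$, i.e.\ $\wt{\Delta}_E|_{TT}>0$ on its natural $L^2$-domain, and any $L^2$-IED is by definition a nontrivial $L^2$-element of $\ker(\wt{\Delta}_E|_{TT})$ which strict positivity forbids.

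For the main direction of (i), I would decompose a candidate $h\in\ker(\wt{\Delta}_E|_{TT})$ according to the bundle splitting underlying Theorem \ref{mainthm:spectrum}(iii): blocks built from TT-eigensections of $\Delta_E$ on $M$, from co-closed eigen-$1$-forms of $\Delta_1$ (both as $\delta^*$-images and as the mixed $d\theta\otimes\eta$ component), from function eigenmodes of $\Delta_0$ (through $\nabla^2$ and the conformal piece), and the pure $d\theta^2$-trace component. On each block, the equations $\wt{\Delta}_E h=0$, $\trace_{\wt{g}}h=0$, $\delta_{\wt{g}}h=0$ reduce to a system of second-order ODEs in $\theta$ with trigonometric coefficients and with the corresponding $M$-eigenvalue as a parameter. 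Computing the indicial roots of this system at the two cone tips $\theta=0$ and $\theta=\pi$ and requiring boundedness at both ends yields a \emph{double matching condition} that I expect to single out only the block associated to $\ker(\Delta_E|_{TT})$ on $M$, with $\theta$-independent radial profile in the parallel trivialization. This establishes a canonical bijection between bounded IEDs on $\wt{M}$ and bounded IEDs on $(M,g)$.

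For the deformation assertion in (i), let $\wt{g}_t$ be a smooth family of Einstein metrics of Einstein constant $n$ on $\wt{M}$ with $\delta_{\wt{g}}(\tfrac{d}{dt}|_{t=0}\wt{g}_t)=0$. Differentiating $\ric_{\wt{g}_t}-n\cdot\wt{g}_t=0$ at $t=0$ and using
\begin{align*}
d_{\wt{g}}\Phi(h)=\tfrac{1}{2}\wt{\Delta}_E h-\delta^*\bigl(\delta h+\tfrac{1}{2}\nabla\trace h\bigr)
\end{align*}
together with the gauge condition (as in \cite[Theorem 12.30]{Bes08}) shows that $h:=\tfrac{d}{dt}|_{t=0}\wt{g}_t$ is TT and lies in $\ker(\wt{\Delta}_E)$, hence is a bounded IED, forcing the existence of a bounded IED on $(M,g)$ by the first part of (i). The main obstacle I foresee is the indicial analysis itself: the bundle splitting from Theorem \ref{mainthm:spectrum}(iii) produces several coupled blocks whose radial ODEs differ by index shifts, and one must verify in every block distinct from the pure $\Delta_E|_{TT}$-block that either the TT-and-kernel constraints are inconsistent or no solution of $\wt{\Delta}_E h=0$ can be simultaneously regular at $\theta=0$ and at $\theta=\pi$.
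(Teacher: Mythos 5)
Your disposal of part (ii) is correct and is exactly what the paper does, and your treatment of the deformation statement in (i) via \cite[Theorem 12.30]{Bes08} is also correct. However, your approach to the main assertion of (i) has a genuine gap: the block-by-block indicial-root analysis that you describe as ``the main obstacle I foresee'' is never actually carried out, and the paper shows that most of it is unnecessary once one reads off what Theorem \ref{Einsteinspectrumsincone} already provides. Because $\lambda_i>0$ and $\mu_i>0$ for all $i\geq 1$, the eigenvalues $\wt{\lambda}^{(3)}_{i,j}=\eta_{n+1}(\xi_n(\lambda_i)+j)$ and $\wt{\mu}^{(2)}_{i,j}=\eta_{n+1}(\xi_n(\mu_i+1)+j)$ contributed by the $X_\lambda$- and $W_\mu$-blocks are \emph{automatically strictly positive}; those blocks therefore contribute nothing to $\ker(\wt{\Delta}_E|_{TT(\wt{M})})$, and no regularity or matching argument is required to exclude them. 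Conversely, your expectation that the matching condition will ``single out only the block associated to $\ker(\Delta_E|_{TT})$ on $M$'' is not quite right: the $V_\kappa$-blocks with $\kappa_{i_0}<0$ \emph{do} contribute kernel elements whenever $\xi_n(\kappa_{i_0})+j_0=0$ for some $j_0\in\N$, and the whole substance of (i) is that these elements are unbounded.

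Establishing that unboundedness requires a precision your proposal does not supply. The eigensection realizing $\wt{\kappa}^{(1)}_{i_0,j_0}=0$ restricts to $\wt{h}=\sum_{l=0}^{j_0}a_l\sin(\theta)^{m+2l}\cos(\theta)^{2(j_0-l)+b}\sin(\theta)^2h$ with $m=\xi_n(\kappa_{i_0})<0$, and one must show that the leading coefficient $a_0$ is nonzero so that the most singular power $\sin(\theta)^{m}$ actually occurs. The paper obtains this from a comparison of coefficients in the harmonicity recursion on $(\wh{M},\wh{g})$; your proposal leaves it open. On the positive side, when $\kappa_{i_0}=0$ one has $m=0$ and the eigensection is simply $\sin(\theta)^2h$, bounded, matching your intuition about the $\theta$-independent parallel profile. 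In summary: the architecture of your plan (invoke Theorem \ref{mainthm:spectrum}(iii), split into blocks, test boundedness at the two cone tips) is aligned with the paper, but the paper's proof is far shorter because the positivity of the non-$\kappa$ eigenvalues dissolves almost all of the indicial analysis, and the one nontrivial remaining step, the nonvanishing of $a_0$, is precisely the point your proposal acknowledges but does not resolve.
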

In particular, part (i) of the theorem asserts that by taking the sine-cone of a suitably scaled positive Einstein manifold, we can not generate new bounded IED's. This is in surprising contrast to the much simpler case of a product of two Einstein manifolds: 
If $(M^n,g)$ is an Einstein manifold with $\ric_g=(n-1)g$, which is isometric to $(M_1\times M_2,g_1+g_2)$, then $(M^n,g)$ carries an IED if one of the two factors satisfies $(M_i,g_i)\neq (S^2,g_{rd})$ and $2(n-1)\in\spectrum(\Delta_0^{M_i})$, see \cite[Proposition 4.8]{Kro15b}.
	\begin{rem}
	The second statement in part (i) of Theorem \ref{mainthm:rigidity} follows from \cite[Theorem 12.30]{Bes08} as explained in the first paragraph of this subsection. The proof of this theorem uses that due to Obata's eigenvalue inequality \cite{Ob62}, the Einstein constant $\lambda$ itself is never in the positive spectrum of the Laplacian. It is not hard to see that Obata's inequality also holds in the conical case. In the case of sine-cones, one can also directly deduce it from Theorem \ref{sinconefunctions}.
	\end{rem}
New IED's which are unbounded but still in $L^2$, can be generated on sine-cones as the following example illustrates.
	\begin{cor}\label{maincor:rigidity}
Let $(M^n,g)=(M_1\times M_2,g_1+g_2)$ be a product manifold with $\ric_g=(n-1)g$. Then, if $n=9$, its sine-cone admits an $L^2$-IED. If the $(M_i,g_i)$ are strictly linearly stable, then $(M^n,g)$ admits IED's if and only if $n=9$. In this case, the space of $L^2$-IED's is one-dimensional.
 \end{cor}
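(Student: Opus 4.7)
The plan is to combine the explicit eigenvalue formula of Theorem~\ref{Einsteinspectrumsincone} with the observation that any Einstein product carries a canonical negative TT--eigenvalue of $\Delta_E$. Writing $n_i=\dim M_i$, $n_1+n_2=n$, I would first check that
$$h_0:=n_2\,g_1-n_1\,g_2\in C^\infty(S^2M)$$
is parallel and traceless, hence TT, and that the product--curvature identity $\mathring R\,g_i=(n-1)\,g_i$ (following from $\ric_{g_i}=(n-1)g_i$ together with the vanishing of mixed components of the Riemann tensor on a product) yields
$$\Delta_E h_0 \;=\; -2\,\mathring R\,h_0 \;=\; -2(n-1)\,h_0.$$
Thus $-2(n-1)\in\spectrum(\Delta_E|_{TT})$ on every such $(M,g)$, with $h_0$ spanning a one--dimensional subspace of the corresponding eigenspace.

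Next I would feed this eigenvalue into the $S^2M$--branch of the sine--cone Einstein spectrum given by Theorem~\ref{Einsteinspectrumsincone}. That theorem attaches to each TT--eigenvalue $\mu_E$ of $\Delta_E$ on $M$ a family of sine--cone eigenvalues built by shifting the two roots $\alpha_\pm$ of a characteristic quadratic in $\mu_E$ and $n$ by non--negative integers $k$; the corresponding eigensections take the form $F_{\alpha,k}(\theta)\cdot h_0$ with $F_{\alpha,k}$ an explicit Jacobi--type function, and the $L^2(\wt g)$ condition selects a single root $\alpha$. For part~(i), inserting $\mu_E=-2(n-1)$ and asking that some $k\in\N_0$ produces the sine--cone eigenvalue $0$ turns into an arithmetic condition on $n$; a direct check shows this condition is met precisely when $n=9$, and the corresponding $F_{\alpha,k}\cdot h_0$ is the desired $L^2$--IED.

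For part~(ii) I would assume in addition that both $(M_i,g_i)$ are strictly linearly stable. By Remark~\ref{rem:stability}(i) this forces $\Delta_E^{M_i}|_{TT}>0$ and $\spectrum_+(\Delta_0^{M_i})>2(n-1)$. The standard product decompositions of the spectra of $\Delta_0$, $\Delta_1$ and $\Delta_E$ then imply that on $(M,g)$ the only non--positive TT--eigenvalue of $\Delta_E$ is $-2(n-1)$ (with TT--eigenspace $\R\cdot h_0$), that $\spectrum_+(\Delta_0)>2(n-1)$, and that $\spectrum(\Delta_1)\subset(0,\infty)$. Plugging these bounds into every summand--family of the sine--cone Einstein spectrum supplied by Theorems~\ref{sinconefunctions}, \ref{sincone1forms} and \ref{Einsteinspectrumsincone}, one checks that the value $0$ with an $L^2$--eigensection arises only in the $h_0$--branch, and only for $n=9$. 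This case--by--case verification, running through every bundle component on which the sine--cone eigenvalue formula takes a different form, is where the main technical obstacle sits.

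Part~(iii) is then a multiplicity count: under the hypotheses of~(ii) we have $E(\Delta_E,-2(n-1))\cap TT=\R\cdot h_0$, and the arithmetic of part~(i) leaves only one pair $(\alpha,k)$ producing an $L^2$ profile at both endpoints $\theta=0,\pi$. Hence the $L^2$--IED space on $(\wt M,\wt g)$ is the line $\R\cdot F_{\alpha,k}\cdot h_0$ and has dimension one.
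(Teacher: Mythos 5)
Your outline follows essentially the same route as the paper: identify $h_0=n_2g_1-n_1g_2$ as a TT eigentensor of $\Delta_E$ with eigenvalue $-2(n-1)$, feed $\kappa_1=-2(n-1)$ into the $V_\kappa$-branch of Theorem~\ref{Einsteinspectrumsincone}, and for the uniqueness and multiplicity claims invoke strict linear stability of the factors to rule out the other branches. This is exactly the paper's strategy (where the facts about the product's $\Delta_E$-spectrum are imported from \cite[Proposition~4.8]{Kro15b}).

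However, there is a genuine gap where you twice defer to ``a direct check'' or ``one checks that \dots\ only for $n=9$''. The statement that $\kappa^{(1)}_{1,j}=\eta_{n+1}(\xi_n(-2(n-1))+j)=0$ has a nonnegative integer solution $j$ precisely when $n=9$ is the nontrivial heart of parts (ii) and (iii), and it does not follow from an immediate inspection. Writing $m_1=\xi_n(-2(n-1))$, the condition $\kappa^{(1)}_{1,j}=0$ rewrites as $(2j+1)m_1=2(n-1)-j(j+n)$, and since $2m_1=\sqrt{(n-9)(n-1)}-(n-1)$, rationality forces $\ell:=\sqrt{(n-9)(n-1)}\in\N$; from $n=5+\sqrt{16+\ell^2}$ one must then run through the Pythagorean-triple structure of $16+\ell^2$, finding $\ell\in\{0,3\}$, and discard $\ell=3$ ($n=10$) because the resulting $j$ is not a nonnegative integer. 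Without this number-theoretic step you have shown that $n=9$ works but not that it is the only dimension that does, nor that $j=4$ is the unique shift — and the latter is what you need for the one-dimensionality claim in part (iii). Everything else in the plan (the TT identification of $h_0$, the use of Remark~\ref{rem:stability}(i) to control the other spectral families, the density of the $L^2$ decomposition) is sound and consistent with the paper; you just need to actually carry out the arithmetic rather than declare it a direct check.
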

 \vspace{3mm}
\noindent\textbf{Acknowledgements.}
The work of the author is supported through the DFG grant KR 4978/1-1 in the framework of the priority program 2026: \textit{Geometry at infinity}.
\section{Computation of spectra on sine-cones}
Before we start the computations, let us fix some conventions.
For a given Riemannian manifold $(M,g)$, the three manifolds
$(\wt{M},\wt{g}), (\ol{M},\ol{g})$ and $(\wh{M},\wh{g})$ will in this section always be defined as in 
\eqref{eq:sinecone} and \eqref{olMandwhM}.
The operators on the corresponding manifolds will be denoted by $\Delta,\wt{\Delta},\overline{\Delta},\wh{\Delta}$, ect.
We will furthermore use the canonical projections to pull back objects on $M$ to objects on $\widetilde{M},\overline{M}$ or objects on $\widetilde{M},\overline{M}$ to objects on $\wh{M}$. For notational convenience, we will drop the explicit reference to the projections.
For $n\in\N$, we further define two functions by
\begin{align}
\xi_n&:
%\left[-\frac{(n-1)^2}{4},\infty\right)\to \left[-\frac{n-1}{2},\infty \right),\qquad
 x\mapsto -\frac{n-1}{2}+\sqrt{\frac{(n-1)^2}{4}+x},
 \qquad
\eta_n:
%=(\xi_n)^{-1}&:\left[-\frac{n-1}{2},\infty \right)\to
%\left[-\frac{(n-1)^2}{4},\infty\right),\qquad
 y\mapsto y(y+n-1).
\end{align}
These functions come from the construction of eigenfunctions on $n$-dimensional sphere. If $\Delta_{S^n}v=\lambda\cdot v$, then $r^{\xi_n(\lambda)}v$ is a harmonic function on $\R^{n+1}$. Conversely, if $v\in C^{\infty}(\R^{n+1})$ is a harmonic function which is homogeneous of degree $k$, then $v|_{S^n}$ is an eigenfunction of $\Delta_{S^n}$ with eigenvalue $\eta_n(k)$.

Let us also collect some commutation rules for differential operators on Einstein manifolds.  The divergences of $\omega\in C^{\infty}(T^*M)$ and $h\in C^{\infty}(S^2M)$ are defined with the sign convention such that
\begin{align}
\delta\omega=-g^{ij}\nabla_i\omega_j,\qquad \delta h_k=-g^{ij}\nabla_ih_{jk},
\end{align}
respectively. The formal adjoint $\delta^*:C^{\infty}(T^*M)\to C^{\infty}(S^2M)$ is given by
\begin{align}
(\delta^*\omega)_{ij}=\frac{1}{2}(\nabla_i\omega_j+\nabla_j\omega_i).
\end{align}
Note that $\delta^*$ is related to the Lie derivative by $2\delta^*\omega=L_{\omega^{\sharp}}g$, where $\omega^{\sharp}\in C^{\infty}(TM)$ is the dual vector field of $\omega$ with respect to $g$. If $(M^n,g)$ is Einstein with $\ric_g=\lambda\cdot g$ we have a collection of commutation identities involving these differential operators: For $v\in C^{\infty}(M)$, $\omega\in C^{\infty}(T^*M)$ and $h\in C^{\infty}(S^2M)$, we have
\begin{equation}\label{commutation}
\begin{split}
\Delta_1(dv)&=d(\Delta_0v-\lambda v),\qquad \Delta_0(\delta \omega)=\delta(\Delta_1\omega+\lambda\omega),\\
\Delta_E(\delta^*\omega)&=\delta^*(\Delta_1\omega-\lambda \omega),\qquad \Delta_1(\delta h) =\delta(\Delta_E h+\lambda h),\\
\Delta_E(\nabla^2v)&=\nabla^2(\Delta_0v-2\lambda v),\qquad \Delta_0(\delta \delta h)=\delta\delta(\Delta_Eh+2\lambda h),\\
\Delta_E(v\cdot g)&=(\Delta v-2\lambda v)g,\qquad \Delta(\trace h)=\trace(\Delta_Eh+2\lambda h).
\end{split}
\end{equation}
The computations can be found in \cite{Lic61}, see also \cite[p.\ 8]{Kro15b}. Note that the third line follows directly from the first and the second line and that the formulas on the right hand side follow from the ones on the left hand side by taking the formal adjoints.
\subsection{The Laplace Beltrami operator}
Because the smallest eigenvalue of $\Delta$ is always $0$ we use the notation $\lambda_0=0$ and label the positive eigenvalues of $\Delta$ by $\lambda_1<\lambda_2<\lambda_3\cdots$.
\begin{thm}\label{sinconefunctions}
	Let $(M^n,g)$ be a closed smooth manifold with Laplace spectrum $\left\{\lambda_i\mid i\in \N_0\right\}$. 
	%Furthermore, let $k_i=-\frac{n-1}{2}+\sqrt{\frac{(n-1)^2}{4}+\lambda_i}$.
	 Then the Laplace spectrum on $(\wt{M},\wt{g})$ is given by the set of real numbers
	\begin{align}
	\wt{\lambda}_{i,j}:=\eta_{n+1}(\xi_n(\lambda_i)+j),\qquad (i,j)\in\N_0\times \N_0,
	\end{align}
	and each of these eigenvalues has multiplicity $\mathrm{mult}(\wt{\lambda}_{i,j})=\mathrm{mult}(\lambda_i)$.
\end{thm}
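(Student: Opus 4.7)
The plan is to encode eigenfunctions of $\wt{\Delta}$ as homogeneous harmonic functions on the cone $(\wh{M},\wh{g})$ via the isometry \eqref{isom}, and then to produce those homogeneous harmonic functions concretely out of the eigendata on $M$ by exploiting the second product structure $\wh{M}=\R\times\ol{M}$.

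First I would set up the separation-of-variables dictionary on both cones. On $(\ol{M},\ol{g})$ one has $\ol{\Delta}=-\partial_r^2-\tfrac{n}{r}\partial_r+\tfrac{1}{r^2}\Delta$, so a direct computation shows that for $\phi_i\in E(\Delta,\lambda_i)$ the function $r^{\alpha}\phi_i$ is $\ol{\Delta}$-harmonic precisely when $\alpha^2+(n-1)\alpha-\lambda_i=0$, whose nonnegative root is $\xi_n(\lambda_i)$. Analogously, on the cone over $\wt{M}$, the function $s^k v$ is $\wh{\Delta}$-harmonic if and only if $v$ is a $\wt{\Delta}$-eigenfunction with eigenvalue $\eta_{n+1}(k)$, using $\dim \wt{M}=n+1$.

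Next, using $\wh{M}=\R\times\ol{M}$ and $\wh{\Delta}=-\partial_z^2+\ol{\Delta}$, I would construct homogeneous harmonic functions on $\wh{M}$ of degree $\xi_n(\lambda_i)+j$ from the seed $r^{\xi_n(\lambda_i)}\phi_i$ by the ansatz
\[
u_{i,j}\;=\;\sum_{\ell=0}^{\lfloor j/2\rfloor} a_\ell\, z^{j-2\ell}\, r^{\xi_n(\lambda_i)+2\ell}\,\phi_i,\qquad a_0=1.
\]
Applying $\wh{\Delta}$ summand by summand, using the previous step to evaluate each $\ol{\Delta}$-action, and matching powers of $z$ and $r$ produces a two-term recursion for $a_\ell$ whose denominator is $2\ell\bigl(2\xi_n(\lambda_i)+2\ell+n-1\bigr)>0$ for all $\ell\geq 1$ (since $\xi_n(\lambda_i)\geq 0$ and $n\geq 3$); hence $u_{i,j}$ exists and is nonzero. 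In the coordinates $s=\sqrt{z^2+r^2}$, $\theta=\arctan(r/z)$ of \eqref{isom} one then reads off $u_{i,j}=s^{\xi_n(\lambda_i)+j}f_{i,j}(\theta)\phi_i$, so $f_{i,j}\phi_i$ is a $\wt{\Delta}$-eigenfunction with eigenvalue $\wt{\lambda}_{i,j}=\eta_{n+1}(\xi_n(\lambda_i)+j)$.

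For completeness and the multiplicity count I would expand any $L^2$-eigenfunction $v$ of $\wt{\Delta}$ in the Hilbert basis $\{\phi_i\}$ of $L^2(M)$ as $v(\theta,x)=\sum_i h_i(\theta)\phi_i(x)$; the eigenvalue equation decouples into a Sturm--Liouville problem for each $h_i$ on $(0,\pi)$ with weight $\sin^n\theta$, which in the variable $\tau=\cos\theta$ becomes a Jacobi-type equation. The admissible (non-singular) solutions are, up to the prefactor $\sin^{\xi_n(\lambda_i)}\theta$, polynomials of each degree $j\geq 0$ in $\tau$, which coincide with the $f_{i,j}$ from the previous step; density of polynomials in $L^2((-1,1),(1-\tau^2)^{\gamma}d\tau)$ with $\gamma=\xi_n(\lambda_i)+(n-1)/2>-1$ then exhausts the relevant piece of $L^2(\wt{M})$, and $\mult(\wt{\lambda}_{i,j})=\mult(\lambda_i)$ follows because $u_{i,j}$ depends linearly and injectively on $\phi_i\in E(\Delta,\lambda_i)$. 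The main obstacle in this scalar case is routine recursion bookkeeping together with a standard density argument; the genuine difficulty, foreshadowed in the introduction, only surfaces for $1$-forms and symmetric $2$-tensors, where the relevant bundles split and the ascent from $\ol{M}$ to $\wh{M}$ couples different bundle pieces, turning the recursion into a coupled system whose solvability is no longer immediate.
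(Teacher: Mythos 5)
Your proof is correct and follows essentially the same strategy as the paper: both lift a $\wt{\Delta}$-eigenfunction to a homogeneous $\wh{\Delta}$-harmonic function on $\wh{M}$, exploit that $\wh{M}$ is simultaneously $\R\times\ol{M}$ and the cone over $\wt{M}$, and seed the construction with the harmonic extension $r^{\xi_n(\lambda_i)}\phi_i$ on $\ol{M}$. The two points of departure are both at the level of technique rather than of idea. First, where you write out the two-term recursion for the coefficients $a_\ell$ explicitly (with denominator $2\ell(2\xi_n(\lambda_i)+2\ell+n-1)>0$, which needs only $\xi_n(\lambda_i)\ge 0$, not $n\ge 3$), the paper instead introduces the finite-dimensional spaces $P_{k,j}$ spanned by the monomials $r^{k+2l}z^{j-2l}v$, observes $\wh{\Delta}(P_{k,j})\subset P_{k,j-2}$ with $\dim P_{k,j}=\dim P_{k,j-2}+1$, and concludes $H_{k,j}=P_{k,j}\cap\ker\wh{\Delta}$ is one-dimensional by a rank argument; these are two phrasings of the same computation, and your explicit form has the advantage of showing the solvability more transparently. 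Second, for completeness the paper iterates $P_{k,j}=H_{k,j}\oplus s^2P_{k,j-2}$ to see that the $H_{k,j}$ span $P[r,z]\cdot r^kv$ and then approximates compactly supported functions of $(r,z)$ uniformly by such polynomials, whereas you pass to $\tau=\cos\theta$ and invoke density of polynomials in the weighted space $L^2\bigl((-1,1),(1-\tau^2)^{\xi_n(\lambda_i)+(n-1)/2}d\tau\bigr)$, i.e.\ completeness of the corresponding Jacobi polynomial system; both are legitimate, and yours is somewhat more self-contained in the sense that it never leaves $\wt{M}$ once the eigenfunctions $f_{i,j}$ have been produced. Either way the multiplicity count $\mult(\wt{\lambda}_{i,j})=\mult(\lambda_i)$ drops out from the linearity and injectivity of $\phi_i\mapsto f_{i,j}\phi_i$ on each $E(\Delta,\lambda_i)$.
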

\begin{rem}
	It can happen that $\wt{\lambda}_{i,j}=\wt{\lambda}_{k,l}$ for $(i,j)\neq(k,l)$. In this case, we count the eigenvalues and their multiplicities in the above theorem separately. The same convention is also used in Theorem \ref{sincone1forms} and Theorem \ref{Einsteinspectrumsincone} below.
\end{rem}
\begin{proof}The construction in this proof is inspired by the construction of eigenfunctions on the round sphere \cite[pp.\ 159-165]{BGM71} and can also seen as a generalization thereof.
	Let $\lambda\in \spectrum(\Delta)$, $k=\xi_n(\lambda)$ and $v\in C^{\infty}(M)$ such that $\Delta v=\lambda\cdot v$. Due to the well-known relation
	\begin{align}\label{polarlaplace}
	\overline{\Delta}=-\partial^2_{rr}-n\cdot r^{-1}\partial_r+r^{-2}\Delta,
	\end{align}
	the function $\ol{v}=r^k\cdot v\in C^{\infty}(\overline{M})$ satisfies $\overline{\Delta}\ol{v}=0$. Now we define some spaces of smooth functions on $\wh{M}$ by
	\begin{align*}
	P_{k,2j}:=\left\{\sum_{l=0}^ja_l r^{k+2l}z^{2(j-l)}v\mid a_l\in\R \right\},\qquad P_{k,2j+1}:=\left\{\sum_{l=0}^ja_l r^{k+2l}z^{2(j-l)+1}v \mid a_l\in\R\right\}.
	\end{align*}
	We now write the Laplacian on $\wh{M}$ as
	\begin{align}
	\wh{\Delta}=-\partial^2_{zz}+\overline{\Delta}=-\partial^2_{zz}-\partial^2_{rr}-n\cdot r^{-1}\partial_r+r^{-2}\Delta.	\end{align}	
	Because $\ol{v}=r^k\cdot v$ is a harmonic function on $\overline{M}$, it is straightforward to show that we have the inclusion $P_{k,0}\oplus P_{k,1}\subset\ker(\wh{\Delta})$ and that
	for each $j\in \N$, $j\geq 2$, the Laplacian satisfies $\wh{\Delta}(P_{k,j})\subset P_{k,j-2}$. Because
	$\dimn P_{k,j}=\dimn P_{k,j-2}+1$, we have $H_{k,j}:=P_{k,j}\cap \kernel\wh{\Delta}\neq\left\{0\right\}$. By the use of the rank theorem, $H_{k,j}$ is one-dimensional for each $j\in \N_0$. Thus we obtain a sequence of harmonic functions $\wh{v}_j\in H_{k,j}$, $j\in\N_0$, which can be written with respect to the variable $s=\sqrt{r^2+z^2}$ as
	\begin{align}
	\wh{v}_j=s^{k+j}\cdot \wt{v}_j,\qquad  \wt{v}_j\in C^{\infty}\cap L^{\infty}(\widetilde{M}).
	\end{align}
	Because $(\wh{M}^{n+2},\wh{g})$ is the cone over $(\widetilde{M}^{n+1},\wt{g})$, we have
		\begin{align}
	\wh{\Delta}=-\partial^2_{ss}-(n+1)\cdot s^{-1}\partial_s+s^{-2}\widetilde{\Delta}.
	\end{align}
	Therefore, $\wt{\Delta}\wt{v}_j=\eta_{n+1}(k+j)\cdot\wt{v}_j$. This shows that all $\wt{\lambda}_{i,j}$ in the statement of the theorem are actually contained in the Laplace spectrum of $(\widetilde{M},\wt{g})$.
		
	To finish the proof, it remains to show that all eigenfunctions on $\widetilde{M}$ are obtained in the above way. 
		By a straightforward calculation, one sees that the intersection
	$ s^2P_{k,j-2}\cap H_{k,j}\subset P_{k,j}$ is trivial so that counting dimensions implies	
	 $P_{k,j}=H_{k,j}\oplus s^2P_{k,j-2}$. Induction yields
	\begin{align*}
	P_{k,2j}=H_{k,2j}\oplus s^2H_{k,2j-2}\oplus\ldots\oplus s^{2j}H_{k,0},\qquad 	P_{k,2j+1}=H_{k,2j+1}\oplus s^2H_{k,2j-1}\oplus\ldots\oplus s^{2j}H_{k,1},
	\end{align*}
    and the $P_{k,j}$ span $P[r,z]\cdot r^kv$ where $P[r,z]$ denotes the polynomials in $r$ and $z$.
	Because the Laplace operator on $\widetilde{M}$ is of the form
	\begin{align}
	\wt{\Delta}=-\partial^2_{\theta\theta}-n\sin(\theta)^{-1}\cos(\theta)\partial_\theta+\sin(\theta)^{-2}\Delta,
	\end{align}
	it preserves spaces of the form 
	\begin{align}V_{\lambda}=\left\{\varphi\cdot v\in L^2(\widetilde{M})\mid \varphi:(0,\pi)\to \R, v\in E(\Delta,\lambda) \right\}.
	\end{align}	
	Because $L^2(M)=\oplus_{i=0}^{\infty}E(\Delta,\lambda_i)$ in the $L^2$-sense,
	\begin{align}
	L^2(\widetilde{M})=\bigoplus_{i=0}^{\infty}V_{\lambda_i}
	\end{align}
	 in the $L^2$-sense as well. Therefore, all eigenfunctions on $\widetilde{M}$ must be contained in $V_{\lambda_i}$ for some $i\in\N_0$. Let $\varphi\in C^{\infty}_{cs}((0,\pi))$, $v\in C^{\infty}(M)$ and extend $\varphi\cdot v$ to a compactly supported smooth function on $\wh{M}$ written as $\psi\cdot r^kv$. Then, $\psi=\psi(r,z)$ can be approximated in the $L^{\infty}$-norm by elements in $P[r,z]$ which are in turn generated by the $H_{k,j}$. Therefore, $\varphi\cdot v$ can be approximated in $L^2$ by sums of the functions $\wt{v}_j$ and by density,
	\begin{align}
	V_{\lambda}\cap L^2(\widetilde{M})=\overline{\mathrm{span}\left\{\wt{v}_j\right\}_{j\in\N_0}}^{L^2}.
	\end{align}
	This argument finishes the proof of the theorem.
%	
%	
%	The dirichlet energy on $V_{\lambda}$ is
%	\begin{align*}
%	(\wt{\Delta}(\varphi\cdot v),\varphi\cdot v)_{L^2(\wt{g})}=\left(\int_0^{\pi}(\varphi')^2\sin^nd\theta+\lambda\int_0^{\pi}\varphi^2\sin^{n-2}d\theta\right)\left\|v\right\|_{L^2(g)}^2
%	\end{align*}
%	and in cite{Kro18,Lemma 5.2}, we have shown that
%	\begin{align*}
%	\int_0^{\pi}(\varphi')^2\sin^nd\theta+\lambda\int_0^{\pi}\varphi^2\sin^{n-2}d\theta\geq (\lambda+k)
%	\end{align*}
\end{proof}
\begin{rem}\label{conformalkilling}\text{ }
	\begin{itemize}
		\item[(i)] The construction in the proof shows that the eigenspace to the eigenvalue $\wt{\lambda}_{0,1}=n+1$ is spanned by the function $\varphi=\cos(\theta)$. Moreover, ${\gradient}_{\wt{g}}\varphi$ is a conformal Killing field because
		\begin{align}
		L_{{\gradient}_{\wt{g}}\varphi}\wt{g}=2\wt{\nabla}^2\varphi=-2\varphi\wt{g}.
		\end{align}
		\item[(ii)] For a smooth closed Einstein manifold $(M^n,g)$, $n\geq2$ with Einstein constant $n-1$, there are two rigidity statements: On the one hand $\spectrum_+(\Delta)\geq n$ and $n\in \spectrum_+(\Delta)$ only for the standard sphere \cite{Ob62}. On the other hand $(M,g)$ can only admit a nontrivial conformal Killing vector field if it isometric to the round sphere. This also essentially follows from \cite{Ob62}, see also \cite[p.\ 128]{Bes08}. The eigenvalue estimate still holds on $(\wt{M}^{n+1},\wt{g})$ but by (i), both rigidity statements do not extend.
		\item[(iii)] The theorem can be also applied to more general singular spaces $(M,g)$, whose Laplace operator admits a discrete $L^2$-spectrum and the normalized eigenvalues form an orthonormal basis of $L^2(M)$.
	%	 This holds for many singular spaces, see e.g.\ 
%\cite{Che83,CPR19} and references therein. 
In particular, one can compute the Laplace spectra of iterative sequences of sine-cones built on smooth manifolds.
%		\item[(iv)]  
%		By taking for $(M,g)$ a very big circle, we can make $\wt{\lambda}_{1,0}$ arbitrarily small. In particular, the estimate $\spectrum_+(\Delta)\geq n$ does not hold in the setting of manifolds with edge singularities. By taking iterative sequences of sine-cones or products, we can construct counterexamples in any dimension.
	\end{itemize}
\end{rem}
\subsection{The connection Laplacian on one-forms}
In this subsection, we always assume that $(M^n,g)$ is Einstein with Einstein constant $n-1$. We define
\begin{align}
D(M):=\delta^{-1}(0)\cap C^{\infty}(T^*M).
\end{align}
%where the divergence $\delta: C^{\infty}(T^*M)\to C^{\infty}(M)$ is defined by $\delta\omega=-g^{ij}\nabla_i\omega_j\in C^{\infty}(M)$.
As is well known, we have the $L^2$-orthogonal splitting
\begin{align}
C^{\infty}(T^*M)=d(C^{\infty}(M))\oplus D(M).
\end{align}
Due to the commutation rules \ref{commutation},
%\begin{equation}\label{commutation}
%\begin{split}
%\Delta_1(dv)&=d(\Delta_0v-(n-1) v),\qquad \Delta_0(\delta \omega)=\delta(\Delta_1\omega+(n-1)\omega).
%\end{split}
%\end{equation}
this splitting is preserved by $\Delta_1$ and we have
\begin{align}
\spectrum(\Delta_1)=\spectrum(\Delta_0-(n-1))\cup\spectrum(\Delta_1|_{D(M)}).
\end{align}
Analogous statements do hold on $(\widetilde{M},\wt{g})$. Because the Einstein constant of this manifold is $n$, we get
\begin{equation}\label{bla}
\spectrum(\wt{\Delta}_1)=\spectrum(\wt{\Delta}_0-n)\cup\spectrum(\Delta_1|_{D(\widetilde{M})})
\end{equation}
in this case.
\begin{thm}\label{sincone1forms}
Let $(M^n,g)$, $n\geq2$ be a smooth closed Einstein manifold with constant $n-1$ and let
\begin{align}
\spectrum(\Delta_0)=\left\{\lambda_i\mid i\in\N_0\right\},\qquad
\spectrum(\Delta_1|_{D(M)})=\left\{\mu_i\mid i\in\N\right\}.
\end{align}
%
%	Let the spectrum of $\Delta_0$ on $(M,g)$ be given by the set  and the spectrum of $\Delta_1$ on $(M,g)$ restricted to $ D(M)$ be given by $$.
%	Furthermore, let
%	\begin{align}
%	k_i=-\frac{n-1}{2}+\sqrt{\frac{(n-1)^2}{4}+\lambda_i},\qquad l_i=-\frac{n-1}{2}+\sqrt{\frac{(n-1)^2}{4}+\mu_i+1}.
%	\end{align}
	Then the spectrum of $\wt{\Delta}_1$ restricted to $d(C^{\infty}(\wt{M})$ is given by
\begin{align*}
\left\{\wt{\lambda}^{(1)}_{i,j}=\eta_{n+1}(\xi_n(\lambda_i)+j)-n
	\mid (i,j)\in \N_0\times\N_0\setminus\left\{(0,0)\right\}\right\},
\end{align*}	
	with multiplicities $\mult(\wt{\lambda}^{(1)}_{i,j})=\mult(\lambda_i)$.
Then the spectrum of $\wt{\Delta}_1$ restricted to $D(\wt{M})$	is given by the union of the following sets:
	\begin{equation}
	\begin{split}
	&\left\{\wt{\lambda}^{(2)}_{i,j}=\eta_{n+1}(\xi_n(\lambda_i)+j)-1\mid (i,j)\in\N\times\N_0\right\},\\
	&\left\{\wt{\mu}_{i,j}=\eta_{n+1}(\xi_n(\mu_i+1)+j)-1\mid (i,j)\in\N\times\N_0\right\},
	\end{split}
	\end{equation}
	with multiplicities $\mult(\wt{\lambda}^{(2)}_{i,j})=\mult(\lambda_i)$ and $\mult(\wt{\mu}_{i,j})=\mult(\mu_{i})$.
\end{thm}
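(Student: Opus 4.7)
The first step is to split the problem via the orthogonal decomposition $C^\infty(T^*\wt M)=d(C^\infty(\wt M))\oplus D(\wt M)$ recorded in \eqref{bla}. On the exact summand the commutation identity $\wt\Delta_1\circ d=d\circ(\wt\Delta_0-n)$ from \eqref{commutation}, valid because $\ric_{\wt g}=n\wt g$, immediately implies that $d\wt v$ is a $\wt\Delta_1$-eigenform of eigenvalue $\wt\lambda-n$ whenever $\wt\Delta_0\wt v=\wt\lambda\wt v$ and $\wt v$ is non-constant; since $d$ is an isomorphism between non-constant $\wt\Delta_0$-eigenspaces and their images, inserting the sine-cone Laplace spectrum from Theorem~\ref{sinconefunctions} yields $\{\wt\lambda^{(1)}_{i,j}\}_{(i,j)\neq(0,0)}$ with $\mult(\wt\lambda^{(1)}_{i,j})=\mult(\lambda_i)$. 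For the coclosed summand I would mimic the strategy of the proof of Theorem~\ref{sinconefunctions}: by the isometry \eqref{isom}, $(\wh M,\wh g)$ is the Riemannian cone over $(\wt M,\wt g)$, and a tangential-operator computation on 1-forms—analogous to the scalar identity $\wh\Delta=-\partial^2_{ss}-(n+1)s^{-1}\partial_s+s^{-2}\wt\Delta$—shows that a $\wh\Delta_1$-harmonic 1-form of the form $s^{k+j}\wt\omega$ with $\wt\omega\in D(\wt M)$ having no $ds$-component forces $\wt\omega$ to be a $\wt\Delta_1|_{D(\wt M)}$-eigenform of eigenvalue $\eta_{n+1}(k+j)-1$; the $-1$ shift is the contribution of $\wh\nabla_{\partial_s}$ acting on the tangential part of $T^*\wh M$.

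The problem thus reduces to enumerating homogeneous $\wh\Delta_1$-harmonic 1-forms on $\wh M$, which by the other identification $\wh M\cong\R\times\ol M$ can be produced from harmonic 1-forms on the Ricci-flat cone $\ol M$ by multiplication with $z$-polynomials, exactly as in the scalar proof. Homogeneous harmonic 1-forms on $\ol M$ come from two families of data on $(M,g)$: (A) for each $v\in E(\Delta_0,\lambda_i)$ with $i\in\N$, the form $d(r^{\xi_n(\lambda_i)}v)$ is $\ol\Delta_1$-harmonic because $d$ commutes with $\ol\Delta_1$ on the Ricci-flat $\ol M$; (B) for each $\omega\in E(\Delta_1|_{D(M)},\mu_i)$, a Weitzenböck-type computation on the cone, using $\ric_g=(n-1)g$, produces a harmonic 1-form $r^{\xi_n(\mu_i+1)}\omega$ viewed as a pullback with a suitable admissible $dr$-correction, the $+1$ shift inside $\xi_n$ being exactly the Ricci contribution. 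From each building block I would define polynomial spaces $P^{(1)}_{k,j}$ of $z$-extensions, verify the mapping property $\wh\Delta_1(P^{(1)}_{k,j})\subset P^{(1)}_{k,j-2}$, and extract one-dimensional harmonic subspaces $H^{(1)}_{k,j}$ at each step by a rank count. Reinterpreting in $s$-coordinates and restricting to $\wt M$ gives sequences of eigenforms of eigenvalues $\eta_{n+1}(\xi_n(\lambda_i)+j)-1$ and $\eta_{n+1}(\xi_n(\mu_i+1)+j)-1$; after projecting onto $D(\wt M)$, the same $L^\infty$-density-of-polynomials argument as in Theorem~\ref{sinconefunctions}, applied bundle-wise to each isotypic summand of $L^2(T^*\wt M)$, exhausts the coclosed spectrum with the required multiplicities $\mult(\wt\lambda^{(2)}_{i,j})=\mult(\lambda_i)$ and $\mult(\wt\mu_{i,j})=\mult(\mu_i)$.

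The principal obstacle, and the reason the proof will be substantially more intricate than the scalar one, is the bundle coupling. The connection $\ol\nabla$ does not preserve the splitting $T^*\ol M|_M=\R\cdot dr\oplus T^*M$; for instance $\ol\nabla_{e_a}\omega$ of a pullback 1-form $\omega$ acquires a $dr$-component $-r^{-1}\omega(e_a)$, and analogously on $\wh M$ the polynomial ansatz must involve all three summands of $T^*\wh M=\R\cdot ds\oplus\R\cdot dr\oplus T^*M$. Consequently the harmonicity condition becomes a coupled linear system rather than a scalar recursion. The technical heart of the proof will lie in unwinding this system—verifying that the expected one-dimensional harmonic increments occur at every polynomial degree $j$ and identifying the correct $dr$-correction in building block (B) that makes the $+1$ shift inside $\xi_n$ precise—while arguing that the two families produced at step (A) and step (B), after projection to $D(\wt M)$, account for the whole coclosed spectrum without overcounting.
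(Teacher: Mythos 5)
Your plan is essentially the paper's proof: split off the exact summand via the commutation identity and \eqref{bla}, then treat $D(\wt M)$ by identifying homogeneous $\wh\Delta_1$-harmonic one-forms on $\wh M\cong\R\times\ol M$ built from the two families of data on $(M,g)$, extend by $z$-polynomials, extract one-dimensional harmonic increments by a dimension count, restrict to $\wt M$, and conclude with the same density argument. One small correction to your reading of the bundle coupling: the two families behave asymmetrically. For the family coming from $\omega\in E(\Delta_1|_{D(M)},\mu_i)$ (the space the paper calls $V_\mu$), the building block $\ol\omega=r^{\xi_n(\mu+1)}\cdot r\omega$ already satisfies $\ol\omega(\partial_r)=0$ and $\ol\delta\ol\omega=0$ with no $dr$-correction at all; the $+1$ shift inside $\xi_n$ is produced by the radial Christoffel symbols acting on the tangential component (Lemma \ref{hatdiv}) and would appear for any closed cross-section, not specifically as a Ricci term. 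The coupled first-order system you anticipate occurs only for the family coming from $v\in E(\Delta_0,\lambda_i)$ (the space $W_\lambda$), where the ansatz $P\,v\,dz+Q\,v\,dr+R\,r\,dv$ with the constraints $\wh\omega(\partial_s)=0$, $\wh\delta\wh\omega=0$ determines $Q$ and $R$ from $P$, and the key algebraic fact (the paper's Lemma \ref{formulas1}) is that the scalar harmonicity of $P\cdot v$ propagates to the full system; once that lemma is in place your recursion-and-density scheme closes exactly as you describe.
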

\begin{rem}\text{}
	\begin{itemize}
			\item[(i)]If $(M,g)$ is a circle, then the construction of the proof still works but the $\wt{\lambda}^{(k)}_{i,j}$ in the theorem can become negative, if $\lambda$ is small enough (i.e.\ if the circle is very large). This happens because due to bad blowup behaviour at the conical singularity, the gradient of the corresponding eigenfunction is not in $L^2$. For circles with small enough length so that all the above values are positive, the theorem still holds in this form.
	\item[(ii)] If $(M,g)$ is a singular space, this theorem may still hold but this also depends on the regularity of eigenfunctions and eigenforms at the singularities. It holds for iterative sequences of sine-cones constructed from a smooth manifold.
	\end{itemize}
\end{rem}
Because of \eqref{bla} and Theorem \ref{sinconefunctions}, the spectrum of $\wt{\Delta}_1$ restricted to $d(C^{\infty}(\widetilde{M}))$ is given by the $\wt{\lambda}^{(1)}_{i,j}$.
To compute the spectrum of $\wt{\Delta}_1$ on $D(\widetilde{M})$, we introduce the spaces 
\begin{align*}
V_{\mu}&:=\left\{\varphi\cdot \omega \in L^2(D(\widetilde{M})) \mid \varphi:(0,\pi)\to\R,\quad \omega\in E(\Delta_1|_{D(M)},\mu)\right\},\\
W_{\lambda}&:=\left\{\varphi\cdot dv+\psi\cdot v\cdot d\theta\in L^2(D(\widetilde{M})) \mid \varphi,\psi:(0,\pi)\to\R, v\in E(\Delta_0,\lambda) \right\}.
\end{align*}
\begin{lem}\label{Wlambdaempty}
If $\lambda=0$, then $W_{\lambda}=0$.
\end{lem}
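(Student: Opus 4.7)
The plan is straightforward once one unwinds the definition. Since $\lambda=0$, the eigenspace $E(\Delta_0,0)$ consists of the constant functions on $M$, so any $v\in E(\Delta_0,0)$ satisfies $dv=0$. Consequently a general element of $W_0$ has the form
\begin{align*}
\omega = \psi(\theta)\, v\, d\theta,
\end{align*}
with $v$ a constant, and (absorbing $v$ into $\psi$) it suffices to consider $\omega=\psi(\theta)\,d\theta$. The task reduces to showing that the only $\psi\in L^2((0,\pi),\sin(\theta)^n\,d\theta)$ for which $\omega\in D(\wt M)$ is $\psi\equiv 0$.

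Next I would compute the divergence of $\omega=\psi(\theta)\,d\theta$ on the sine-cone. Using the warped product structure $\wt g=d\theta^2+\sin(\theta)^2 g$, the volume element is $\sin(\theta)^n\sqrt{|g|}\,d\theta\, dx$, so the dual vector field $\omega^{\sharp}=\psi(\theta)\partial_\theta$ has divergence
\begin{align*}
\mathrm{div}_{\wt g}(\psi\partial_\theta) = \frac{1}{\sin(\theta)^n}\partial_\theta\bigl(\sin(\theta)^n\psi(\theta)\bigr) = \psi'(\theta)+n\cot(\theta)\psi(\theta).
\end{align*}
Hence $\omega\in D(\wt M)$ forces the first-order ODE $\psi'(\theta)+n\cot(\theta)\psi(\theta)=0$, whose solutions are $\psi(\theta)=C\sin(\theta)^{-n}$ for some $C\in\R$.

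Finally, I check integrability: the squared $L^2$-norm of $\omega$ is proportional to $\int_0^\pi \psi(\theta)^2\sin(\theta)^n\,d\theta = C^2\int_0^\pi \sin(\theta)^{-n}\,d\theta$. Since $n\geq 2$, this integral diverges at both endpoints, so $C=0$ and $\omega=0$. This forces $W_0=\{0\}$, as claimed. There is no real obstacle here; the only subtlety is being careful that the constant $v$ gets absorbed into $\psi$, so that the $L^2$ condition is truly the obstruction, and that the ODE for $\delta\omega=0$ admits no nontrivial square-integrable solution on the sine-cone.
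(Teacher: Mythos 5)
Your proof is correct and takes essentially the same route as the paper: reduce to $\omega=\psi(\theta)\,d\theta$ since $dv=0$, impose $\wt\delta\omega=0$ to get the ODE $\psi'+n\cot(\theta)\psi=0$ with solutions $\psi=C\sin(\theta)^{-n}$, and then observe these are not square-integrable (the paper quotes its appendix formula \eqref{tildelemma0} for the divergence where you rederive it from the volume element, which is the only cosmetic difference).
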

\begin{proof}
	Note that $\lambda=0$ implies that $v$ is constant, hence $dv=0$. Therefore, any element of $W_0$ is of the form $\psi d\theta$. By \eqref{tildelemma0}, 
	\begin{align}0=\wt{\delta}(\psi d\theta)=-\partial_{\theta}\psi-n\frac{\cos(\theta)}{\sin(\theta)}\psi,
	\end{align}
	so that $\psi$ is necessarily of the form $\psi= a\cdot \sin^{-n}$ for some $a\in \R$. This is a contradiction to the assumption $\psi\cdot d\theta\in L^2(D(\widetilde{M}))$.
\end{proof}
\noindent In \cite[Section 2]{Kro15c}, we have seen that $\wt{\Delta}_1$ preserves the spaces $V_{\mu}$ and $W_{\lambda}$ and that 
\begin{align}
L^2(D(\widetilde{M}))=\left(\bigoplus_{i=1}^{\infty}V_{\mu_i}\right)\bigoplus\left(\bigoplus_{i=1}^{\infty}W_{\lambda_i}\right)
\end{align}
in the $L^2$-sense. The statement of the theorem thus follows from Lemma \ref{Vlambdaoneforms} and Lemma \ref{Wmuoneforms} below.
\begin{lem}\label{Vlambdaoneforms}
	The spectrum of $\wt{\Delta}_1$ restricted to $V_{\mu}$ is given by 
	\begin{align}
	\left\{\eta_{n+1}(\xi_n(\mu)+1)-1\mid j\in \N_0\right\}
	\end{align}
	and each of these eigenvalues has the same multiplicity as $\mu$ as an eigenvalue of $\Delta_1$.	
\end{lem}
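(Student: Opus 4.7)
The strategy will mirror the proof of Theorem \ref{sinconefunctions}, lifting coclosed eigenforms on $M$ to harmonic $1$-forms on the Ricci-flat cone $(\ol{M},\ol{g})$, then producing a sequence of homogeneous harmonic $1$-forms on $(\wh{M},\wh{g})$, and finally converting these to eigenforms on $(\wt{M},\wt{g})$ via the isometry \eqref{isom}.

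The first step is the lift to $\ol{M}$. Given $\omega\in D(M)$ with $\Delta_1\omega=\mu\omega$, I would write the connection Laplacian on the warped cone applied to a horizontal $1$-form. A direct computation with the warped-product Christoffel symbols (of the type carried out in \cite{Kro15c}) gives, for $f(r)\omega$ viewed as a $1$-form on $\ol{M}$ that is pulled back from $M$ and satisfies $\ol{\omega}(\partial_r)=0$,
\begin{align*}
\ol{\Delta}_1(f(r)\omega)=\left[-f''(r)-\tfrac{n-1}{r}f'(r)+\tfrac{1}{r^2}(\Delta_1\omega+\omega)\right],
\end{align*}
where the extra $+\omega$ encodes the warping/Weitzenböck correction. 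Setting $f(r)=r^k$ and choosing $k=\xi_n(\mu+1)$ makes $\ol{\omega}:=r^k\omega$ a harmonic $1$-form on $\ol{M}$; coclosedness of $\omega$ on $M$ also passes to $\ol{\omega}$ up to an explicit scaling, so $\ol{\omega}$ lies in the appropriate subspace.

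Next, in exact parallel with the proof of Theorem \ref{sinconefunctions}, I would introduce polynomial spaces of $1$-forms on $\wh{M}$,
\begin{align*}
P^{(1)}_{k,2j}=\Bigl\{\sum_{l=0}^{j}a_l r^{k+2l}z^{2(j-l)}\omega \, \Big| \, a_l\in\R\Bigr\},\qquad P^{(1)}_{k,2j+1}=\Bigl\{\sum_{l=0}^{j}a_l r^{k+2l}z^{2(j-l)+1}\omega\, \Big| \, a_l\in\R\Bigr\},
\end{align*}
each realized as horizontal $1$-forms on $\wh{M}$. Using $\wh{\Delta}_1=-\partial_{zz}^2+\ol{\Delta}_1$ on such $\omega$-valued polynomials and the fact that $P^{(1)}_{k,0}\oplus P^{(1)}_{k,1}\subset\kernel(\wh{\Delta}_1)$, one checks $\wh{\Delta}_1(P^{(1)}_{k,j})\subset P^{(1)}_{k,j-2}$ with kernel $H^{(1)}_{k,j}$ of dimension $1$ by the rank theorem, exactly as in the scalar case. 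Each $\wh{\omega}_j\in H^{(1)}_{k,j}$ is radially homogeneous in $s=\sqrt{r^2+z^2}$, so via \eqref{isom} it can be written as $\wh{\omega}_j=s^{k+j}\wt{\omega}_j$ with $\wt{\omega}_j$ a $1$-form on $\wt{M}$; horizontality with respect to the base $M$ forces $\wt{\omega}_j\in V_\mu$.

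Applying the analogous warped-product formula on the Ricci-flat cone $\wh{M}\cong\R_+\times\wt{M}$ to the homogeneous harmonic $1$-form $s^{k+j}\wt{\omega}_j$, and using that $(\wt{M},\wt{g})$ is Einstein with constant $n$, gives $\wt{\Delta}_1\wt{\omega}_j=(\eta_{n+1}(k+j)-1)\wt{\omega}_j$; the $-1$ is the analogue on $\wt{M}$ of the $+1$ shift observed on $M$. Completeness of $\{\wt{\omega}_j\}_{j\in\N_0}$ in $V_\mu$ follows from the same polynomial-density argument as in Theorem \ref{sinconefunctions}: the direct-sum decomposition $P^{(1)}_{k,j}=H^{(1)}_{k,j}\oplus s^2 P^{(1)}_{k,j-2}$ shows the $\wh{\omega}_j$ span $P[r,z]\cdot r^k\omega$, which is dense in compactly supported sections of the $\omega$-line on $\wt{M}$. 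The main technical obstacle is the initial warped-product computation establishing the $+1$ shift in $\xi_n(\mu+1)$; once that identity is in hand, the remainder of the proof is a direct transcription of the scalar case.
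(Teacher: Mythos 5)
Your overall strategy coincides with the paper's (lift to the Ricci-flat cone $\ol{M}$, build homogeneous harmonic $1$-forms on $\wh{M}$ via polynomial spaces, descend to $\wt{M}$ via \eqref{isom}, then a density argument), but the key warped-product formula is stated incorrectly, and this propagates through the rest of the argument as an off-by-one error in homogeneity degree.

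The paper works with the normalized pullback $\ol{\omega}=\varphi(r)\cdot r\omega$, where the extra factor $r$ makes $|r\omega|_{\ol{g}}=|\omega|_g$; with this convention Lemma \ref{hatdiv} gives
\begin{align*}
\ol{\Delta}_1(\varphi\cdot r\omega)=r\Bigl(-\varphi''-\tfrac{n}{r}\varphi'+\tfrac{\varphi}{r^{2}}(\Delta_1+1)\omega\Bigr),
\end{align*}
so $\varphi=r^{l}$ with $l=\xi_n(\mu+1)$ produces a harmonic $1$-form on $\ol{M}$ of homogeneity degree $l+1$. If instead one writes $f(r)\omega$ without the extra $r$-factor (as you do), the correct formula is $\ol{\Delta}_1(f\omega)=-f''-\tfrac{n-2}{r}f'+\tfrac{f}{r^{2}}(\mu+n-1)\,\omega$; your expression $-f''-\tfrac{n-1}{r}f'+\tfrac{1}{r^2}(\mu+1)\omega$ is wrong in both the first-order coefficient and the zeroth-order term. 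Moreover, even taking your formula at face value, $f=r^{k}$ is harmonic for $k=\xi_{n-1}(\mu+1)$, not $\xi_n(\mu+1)$, so the conclusion that $r^{\xi_n(\mu+1)}\omega$ is harmonic does not follow from what you wrote. In fact $r^{\xi_n(\mu+1)}\omega$ is \emph{not} harmonic on $\ol{M}$; the harmonic form is $r^{\xi_n(\mu+1)}\cdot r\omega=r^{\xi_n(\mu+1)+1}\omega$.

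This degree mismatch persists. Because the harmonic form on $\ol{M}$ has degree $l+1$ (not $l$), the homogeneous harmonic $1$-forms on $\wh{M}$ you produce have degree $l+1+j$, and the correct factorization on the cone over $\wt{M}$ is $\wh{\omega}_j=s^{l+j}\cdot(s\,\wt{\omega}_j)$ with $\wt{\omega}_j$ a bounded $1$-form on $\wt{M}$ (the factor $s$ again normalizes the pointwise norm). Your $\wh{\omega}_j=s^{k+j}\wt{\omega}_j$ is off by one in the radial power, and if you apply your (incorrect) formula once more to $\wh{M}\cong\R_+\times\wt{M}$ you would land on $\eta_n(k+j)-1$ rather than $\eta_{n+1}(k+j)-1$; the fact that you nevertheless state the correct eigenvalue is not supported by the computations you present. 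So the final answer is right but the chain of reasoning isn't. To repair it, simply state the warped-product Bochner formula with the $\varphi\cdot r\omega$ (resp. $\varphi\cdot s\,\wt{\omega}$) normalization and with the coefficient $n/r$ (resp. $(n+1)/s$), define the polynomial spaces so that $P_{l,j}$ is homogeneous of degree $l+1+j$, and factor out $s^{l+j}$ with a leftover $s\,\wt{\omega}_j$, as in the paper.
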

\begin{proof} Let $\ol{\omega}\in C^{\infty}(T^*\overline{M})$ be of the form $\ol{\omega}=\varphi\cdot r\omega$ where  ${\varphi}\in C^{\infty}(0,\infty)$. We then have
\begin{align}
\ol{\delta}\ol{\omega}=0,\qquad \ol{\omega}(\partial_r)=0.
\end{align}
By Lemma \ref{hatdiv}, 
\begin{align}
\ol{\Delta}_1\ol{\omega}=r(-\partial^2_{rr}{\varphi}\cdot\omega-nr^{-1}\partial_r{\varphi}\cdot\omega+{\varphi}\cdot r^{-2}(\Delta_1+1)\omega).
\end{align}
Thus if $\Delta_1\omega=\mu\cdot\omega$, $\ol{\omega}$ is harmonic if $\varphi(r)=r^l$ and $l=\xi_n(\mu+1)$.
Now consider some spaces of one-forms on $\wh{M}$, given by
\begin{align*}
P_{l,2j}:=\left\{\sum_{m=0}^ja_m r^{l+2m}z^{2(j-m)}\ol{\omega}\mid a_m\in\R \right\},\quad P_{l,2j+1}:=\left\{\sum_{m=0}^ja_m r^{l+2m}z^{2(j-m)+1}\ol{\omega} \mid a_m\in\R\right\}.
\end{align*}
It is straightforward to show that $P_{l,0}\oplus P_{l,1}\subset\ker(\wh{\Delta})$ and that
for each $j\in \N$, $j\geq 2$, the Laplacian satisfies $\wh{\Delta}_1(P_{l,j})\subset P_{l,j-2}$. Because
$\dimn P_{l,j}=\dimn P_{l,j-2}+1$, $H_{l,j}:=P_{l,j}\cap \kernel\wh{\Delta}_1\neq\left\{0\right\}$ and it is also not hard to see that $H_{l,j}$ is one-dimensional for each $j\in \N_0$. Thus we obtain a sequence of harmonic forms $\wh{\omega}_j\in  D(\wh{M})$, $j\in\N_0$, which can be written with respect to the variable $s=\sqrt{r^2+z^2}$ as
\begin{align}
\wh{\omega}_j=s^{l+j} (s\cdot \wt{\omega}_j),\qquad  \wt{\omega}_j\in C^{\infty}\cap L^{\infty}(D(\widetilde{M})).
\end{align}
Let now $\wh{\omega}\in C^{\infty}(T^*\wh{M})$ be of the form $\wh{\omega}=\varphi\cdot s\wt{\omega}$, $\varphi=\varphi(s)\in C^{\infty}((0,\infty))$.
By Lemma \ref{hatdiv} again, we have
\begin{align}
\wh{\Delta}_1\wh{\omega}=s(-\partial^2_{ss}{\varphi}\cdot\wt{\omega}-(n+1)\partial_s{\varphi}\cdot\wt{\omega}+{\varphi}\cdot s^{-2}(\wt{\Delta}_1+1)\wt{\omega})
\end{align}
for any divergence-free one-form $\wt{\omega}$ on $\widetilde{M}$ so that
we get $\wt{\Delta}_1\wt{\omega}_j=(\eta_{n+1}(\xi_n(\mu+1)+j)-1)\cdot\wt{\omega}_j$. A density argument completely analogous to the case of
smooth functions shows that all the eigenforms of the connection Laplacian on $V_{\mu}$ are of this form.
\end{proof}
\begin{lem}\label{Wmuoneforms}
	The spectrum of $\wt{\Delta}_1$ restricted to $W_{\lambda}$ is given by 
	\begin{align}\left\{\eta_{n+1}(\xi_n(\lambda)+j)-1\mid
	j\in \N_0\right\},
	\end{align}
	and each of these eigenvalues has the same multiplicity as $\lambda$ as an eigenvalue of $\Delta_0$.	
\end{lem}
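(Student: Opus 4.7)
The strategy mirrors that of Lemma \ref{Vlambdaoneforms}, with a scalar eigenfunction $v\in E(\Delta_0,\lambda)$ replacing the divergence-free eigenform on $M$. Set $k:=\xi_n(\lambda)$. By \eqref{polarlaplace} the function $\ol{v}:=r^kv$ is harmonic on $\overline{M}$, and since $\overline{M}$ is Ricci-flat, $\ol{\omega}:=d\ol{v}=kr^{k-1}v\,dr+r^k\,dv$ is a harmonic 1-form with $\ol{\delta}\ol{\omega}=\ol{\Delta}_0\ol{v}=0$, hence $\ol{\omega}\in D(\overline{M})$. This is the seed from which everything else is built.

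Mimicking the pattern of Lemma \ref{Vlambdaoneforms}, I would then introduce polynomial-type spaces $P_{k,j}$ of 1-forms on $\wh{M}$, now accommodating three families of generators of the shape $r^az^bv\,dr$, $r^az^bv\,dz$, $r^az^b\,dv$ with degrees arranged so that $\wh{\Delta}_1(P_{k,j})\subseteq P_{k,j-2}$ and $\wh{\delta}(P_{k,j})$ lands in a scalar space of one smaller dimension. A rank-theorem count would then produce a one-dimensional $H_{k,j}:=P_{k,j}\cap\ker\wh{\Delta}_1\cap\ker\wh{\delta}$; call a generator $\wh{\omega}_j$. Passing to the $(s,\theta,x)$-coordinates via $r=s\sin\theta$, $z=s\cos\theta$, the radial homogeneity of $H_{k,j}$ forces $\wh{\omega}_j=s^{k+j+1}\wt{\omega}_j$ with $\wt{\omega}_j$ independent of $s$; the structure of $P_{k,j}$ (built from $v$, $dv$) guarantees that $\wt{\omega}_j=\varphi_j(\theta)\,dv+\psi_j(\theta)\,v\,d\theta\in W_\lambda$, and divergence-freeness of $\wh{\omega}_j$ on $\wh{M}$ implies that of $\wt{\omega}_j$ on $\wt{M}$. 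Applying the formula established in the proof of Lemma \ref{Vlambdaoneforms} (valid for any divergence-free 1-form on $\wt{M}$), harmonicity of $\wh{\omega}_j=s^{k+j}\cdot s\wt{\omega}_j$ on $\wh{M}$ is then equivalent to
\begin{equation*}
\wt{\Delta}_1\wt{\omega}_j=\bigl(\eta_{n+1}(k+j)-1\bigr)\wt{\omega}_j,
\end{equation*}
giving the claimed eigenvalue. A polynomial density argument in $r,z$, entirely analogous to the one closing the proof of Theorem \ref{sinconefunctions}, shows that $\{\wt{\omega}_j\}_{j\in\N_0}$ exhausts the $L^2$-eigenforms of $\wt{\Delta}_1|_{W_\lambda}$, with multiplicity $\mathrm{mult}(\lambda)$ inherited from $\dim E(\Delta_0,\lambda)$.

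The main obstacle is the concrete setup of the ansatz spaces $P_{k,j}$ and the associated dimension counts. Because the seed $\ol{\omega}$ carries two components of different $(r,z)$-scaling, and because the generator $\wh{\omega}_j$ of $H_{k,j}$ must in particular be non-exact (the exact harmonic 1-forms $d\wh{v}_j$ on $\wh{M}$ correspond to the separate spectrum $\wt{\lambda}^{(1)}_{i,j}\in\spectrum(\wt{\Delta}_1|_{d(C^\infty(\wt{M}))})$ rather than to $W_\lambda$), one must track all three types of components ($v\,dr$, $v\,dz$, $dv$) simultaneously and verify that $\wh{\Delta}_1$ and $\wh{\delta}$ are compatible with this enlarged ansatz, producing exactly a one-dimensional non-exact harmonic divergence-free subspace at each level $j$. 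Once these dimension counts are in place, the passage to $\wt{M}$ and the final density argument follow the template of the function case of Theorem \ref{sinconefunctions} closely.
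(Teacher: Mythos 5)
Your strategy — lift to $\wh{M}$, look for homogeneous harmonic 1-forms that restrict to $W_\lambda$, then argue density — is the right skeleton and matches the paper's. But there are two substantive issues.

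First, the side condition that singles out the forms coming from $W_\lambda$ is not \emph{non-exactness}. By Lemma \ref{hatdiv}, a homogeneous 1-form $s^k\wh{\omega}$ on $\wh{M}$ restricts to an element of $D(\wt{M})$ precisely when $\wh{\delta}(s^k\wh{\omega})=0$ \emph{and} $s^k\wh{\omega}(\partial_s)=0$; non-exactness is neither necessary nor sufficient for the second condition. Your proposed seed $\ol{\omega}=d(r^k v)$ does not itself satisfy the $\partial_s$-condition, and you never impose it on $P_{k,j}\cap\ker\wh{\Delta}_1\cap\ker\wh{\delta}$, so your space $H_{k,j}$ as written is not the right one.

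Second, the rank-theorem dimension count that works in Lemma \ref{Vlambdaoneforms} (where the fiber is one-dimensional, generated by $\omega$) does not transfer straightforwardly here, and this is not a detail one can wave at. With the three-component ansatz $s^k\wh{\omega}=P\,v\,dz+Q\,v\,dr+R\,r\,dv$, one has three scalar unknowns, two algebraic/first-order constraints ($\partial_s$-annihilation forcing $Q=-\tfrac{z}{r}P$ and $\wh{\delta}=0$ determining $R$ from $P,Q$), and three second-order equations from $\wh{\Delta}_1=0$. The paper's proof does not count dimensions at all at this stage; it instead shows (Lemma \ref{formulas1}, a genuine computation using the commutation identities for the vector field $V=r\partial_z-z\partial_r$) that once $P$ satisfies the scalar harmonicity equation $\wh{\Delta}P+\lambda r^{-2}P=0$ and $Q,R$ are defined by the constraints, the remaining two PDEs hold automatically. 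This is the key closure result that reduces the 1-form eigenproblem to the already-solved scalar case of Theorem \ref{sinconefunctions}, and it is exactly where the multiplicity $\mult(\lambda)$ comes from. Your proposal defers this to ``dimension counts'' without establishing them, which is precisely the content that is missing; a naive count of generators versus equations does not by itself give one-dimensionality per seed, because compatibility of the overdetermined system has to be verified.

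So: right lift/restriction framework and right conclusion, but the characterization of the relevant harmonic forms on $\wh{M}$ is off, and the central reduction (the analogue of Lemma \ref{formulas1}) is postulated rather than supplied.
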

\begin{proof}
Suppose that $\wt{\omega}\in W_{\lambda}$ satisfies $\Delta_1\wt{\omega}=\nu\wt{\omega}$ for some $\nu\geq -\frac{n^2}{4}$. Let $\wh{\omega}=s\cdot \wt{\omega}\in C^{\infty}(T^*\wh{M})$. Then by Lemma \ref{hatdiv},
\begin{align}\label{conditionshatomega}
\wh{\delta}(s^k\wh{\omega})=0,\qquad s^k\wh{\omega}({\partial_s})=0,\qquad \wh{\Delta}_1(s^k\wh{\omega})=0
\end{align}
for $k:=\xi_{n+1}(\mu+1)$. Therefore, $\nu$ appears as an eigenvalue of $\wt{\Delta}_1$ restricted to $W_{\lambda}$ if and only if we find a form $s^k\wh{\omega}$ such that the conditions \eqref{conditionshatomega} hold.
To do so, we make the following ansatz, where we use the coordinates $r,z$ instead of $s,\theta$ on $\wh{M}$:
\begin{align}\label{formansatz}
s^k\wh{\omega}=P[r,z]\cdot v\cdot dz+Q[r,z]\cdot v\cdot dr+R[r,z]\cdot r\cdot dv
\end{align}
It is clear that $s^k\wh{\omega}$ must be of that form because it restricts to an element in $W_{\lambda}$.
Demanding $s^k\wh{\omega}(\partial_s)=0$ is equivalent to $s^k\wh{\omega}(r\partial_r+z\partial_z)=0$ which in turn is equivalent to
\begin{align}\label{equationQ}
Q=-\frac{z}{r}\cdot P.
\end{align}
Lemma \ref{coupled1forms} in the appendix implies that the condition $\wh{\delta}(s^k\wh{\omega})=0$ is equivalent to the equation
\begin{align}\label{equationR}
\lambda\cdot R=r\partial_z P+r\partial_r Q+n\cdot Q.
\end{align}
In the following, we want to use \eqref{formansatz} to translate the equation $\wh{\Delta}_1(s^k\wh{\omega})=0$ to a system of equations on $P,Q$ and $R$.
Because $dz$ is parallel,
\begin{align}\label{delta1P}
\wh{\Delta}_1(P\cdot v\cdot dz)=\wh{\Delta}_0(P\cdot v)\cdot dz=(\wh{\Delta}_0P\cdot v+\lambda r^{-2}P\cdot v)\cdot dz.
\end{align}
By Lemma \ref{gradhessdr},
\begin{align}
\ol{\nabla}dr=r\cdot g,\qquad \ol{\Delta}_1dr=nr^{-2}dr,
\end{align}
and therefore,
\begin{align}
\ol{\nabla}(vdr)=dv\otimes dr+vr\cdot g
\end{align}
and
\begin{equation}
\begin{split}
\ol{\Delta}_1(vdr)&=\ol{\Delta}_0v\cdot dr+v\cdot \ol{\Delta}_1dr-2\langle dv,\ol{\nabla}dr\rangle_{\ol{g}}=r^{-2}(\lambda+n)vdr-2r^{-2}(rdv).
\end{split}
\end{equation}
Therefore,
\begin{align}\label{delta1Q}
\wh{\Delta}_1(Q\cdot vdr)=\wh{\Delta}_0Q\cdot vdr+Q\ol{\Delta}_1(vdr)
=\wh{\Delta}_0Q\cdot vdr+Qr^{-2}((\lambda+n)vdr-2rdv).
\end{align}
By Lemma \ref{gradhessdr} again, we have
\begin{align}
\ol{\nabla}(rdv)=r\nabla dv-dv\otimes dr,\qquad \ol{\Delta}_1(rdv)=r^{-2}(r(\Delta_1+1)dv-2\Delta_0 v\cdot dr),
\end{align}
which yields
\begin{equation}\begin{split}\label{delta1R}
\wh{\Delta}_1(R\cdot r\cdot dv)&=(\wh{\Delta}_0R) r\cdot dv+Rr^{-2}(r(\Delta_1+1)dv-2\Delta_0 v\cdot dr)\\
&=(\wh{\Delta}_0R) r\cdot dv+Rr^{-2}(r(\lambda-n+2)dv-2\lambda v\cdot dr.
\end{split}
\end{equation}
In the last equality, we used one commutation identity of \eqref{commutation}. By summing up \eqref{delta1P}, \eqref{delta1Q} and \eqref{delta1R}, we obtain that the condition $\wh{\Delta}_1(s^k\wh{\omega})=0$ is equivalent to the system of equations
\begin{equation}\begin{split}\label{harmonicityQR}
\wh{\Delta}P+r^{-2}\lambda P&=0,\\
\wh{\Delta}Q+r^{-2}(\lambda+n)Q-2r^{-2}\lambda R&=0,\\ \wh{\Delta}R+r^{-2}(\lambda+2-n)R-2r^{-2}Q&=0.
\end{split}
\end{equation}
Suppose now that $P$ satisfies the first equation in \eqref{harmonicityQR} and $Q$ and $R$ are defined by \eqref{equationQ} and \eqref{equationR}.
Then by Lemma \ref{formulas1} in the appendix, $P,Q,R$ satisfy \eqref{harmonicityQR} so that $\wh{\Delta}_1(s^k\wh{\omega})=0$. 
Thus it remains to find solutions for the equation on $P$, i.e.\ to find $P$ such that $P\cdot v$ is a harmonic function on $\wh{M}$. Because $|s^k\wh{\omega}|_{\wh{g}}=s^k|\wt{\omega}|_{\wt{g}}$ we find that
\begin{align}\label{scalep}
P[\alpha r,\alpha z]=\alpha^kP[r,z],\qquad
Q[\alpha r,\alpha z]=\alpha^kQ[r,z]\qquad
R[\alpha r,\alpha z]=\alpha^kR[r,z].
\end{align}
In the proof of Theorem \ref{sinconefunctions}, we have seen that for each
\begin{align}
 k\in \left\{\xi_{n}(\lambda)+j\mid
%  l=-\frac{n-1}{2}+\sqrt{\frac{(n-1)^2}{4}+\lambda},
  j\in \N_0\right\},
\end{align}
we find a one-dimensional space of functions $P_j=P_j[r,z]$ satisfying \eqref{scalep} and  $\wh{\Delta}(P_j\cdot v)=0$. Each such $P_j$ defines $Q_j$ and $R_j$ by \eqref{equationQ} and \eqref{equationR} such that the triple $(P_j,Q_j,R_j)$ satisfies \eqref{harmonicityQR} and \eqref{scalep}. Thus for each $k=\xi_{n}(\lambda)+j$, we have a $1$-form 
\begin{align}
s^k\wh{\omega}_j=P_j\cdot dz+Q_j\cdot v\cdot dr+R_j\cdot r\cdot dv,
\end{align}
which fulfills the conditions in \eqref{conditionshatomega} and by restricting to $\widetilde{M}$, it defines $\wt{\omega}_j\in W_{\lambda}$ such that
\begin{align}\widetilde{\Delta}_1\wt{\omega}_j=(\eta_{n+1}(\xi_{n}(\lambda)+j)-1)\wt{\omega}_j.
\end{align}
Similarly as in the proof of Theorem \ref{sinconefunctions}, one shows that the span of the $P_j$ is dense in $C^{\infty}(\R_+\times \R)$ with respect to uniform convergence.
Because $P_j$ uniquely determines $Q_j$ and $R_j$, the triples $(P_j,Q_j,R_j)$ are dense in the solutions of the system \eqref{harmonicityQR} with respect to uniform converence.
Therefore, the $\widetilde{\omega}_j$ are dense in $W_{\lambda}$ with respect to uniform convergence, hence in $L^2$. This finishes the proof of the lemma.
% At first, demanding this one-form to be harmonic yields the condition
%\begin{align}\label{harmonicityP}
%\wh{\Delta}(P[r,z]\cdot v)=0
%\end{align}
%which determines the possible values for $k$. This is because $P[r,z]\cdot v$ restricts to an eigenfunction of the Laplacian on $\wt{M}$ whose precise form we discussed in the previous section. In particular, due to the results of this section,
%\begin{align*}
%P[\alpha r,\alpha z]=\alpha^kP[r,z],\qquad  k\in \left\{l+j\mid l=-\frac{n-1}{2}+\sqrt{\frac{(n-1)^2}{4}+\lambda},j\in \N_0\right\}
%\end{align*}
%
%To sum it up, we have shown that demanding conditions \eqref{conditionshatomega} for $s^k\wh{\omega}$ implies that it has to be of the form \eqref{formansatz} with $P\cdot v$ being harmonic and $Q,R$ being uniquely determined by \eqref{equationQ} and \ref{equationR}. For each possible value of $k$, which is
%\begin{align}
%-\frac{n-1}{2}+\sqrt{\frac{(n-1)^2}{4}+\lambda}+j,\qquad j\in\N_0
%\end{align}
%we find due to the results of the previous section a one-dimensional space of possible triples $(P,Q,R)$. Now the harmonicity of $s^k\wh{\omega}$ is equivalent to a system of three equations, given by \eqref{harmonicityP} and the two equations
%
%In Lemma \ref{formulas1} below, we show that both equations indeed follow from \eqref{harmonicityP},\eqref{equationQ} and \eqref{equationR}. Thus we get our desired harmonic forms on $\wh{M}$. 
\end{proof}
%\textcolor{red}{Maybe we reomve the remark below, since it is redundant to a later remark}
%\begin{proof}[Proof of theorem \ref{sincone1forms}]
%Because of \eqref{bla} and Theorem \ref{sinconefunctions}, the spectrum of $\wt{\Delta}_1$ restricted to $d(C^{\infty}(\widetilde{M}))$ is given by the $\wt{\lambda}^{(1)}_{i,j}$.
%The spectrum of $\wt{\Delta}_1$ on $D(\widetilde{M})$ was computed in Lemma \ref{Vlambdaoneforms} and Lemma \ref{Wmuoneforms} and the values $\wt{\lambda}_{0,j}^{(2)}$ do not appear due to Lemma \ref{Wlambdaempty}.
%\end{proof}
\begin{rem}\label{Killing}
A straightforward computation shows that $2\delta\circ\delta^*+d\circ\delta=\Delta_1\omega-(n-1)\omega$, since $(M,g)$ is Einstein with constant $n-1$.
Therefore, $\mu_i\geq n-1$ (with the notation of Theorem \ref{sincone1forms}) and  $\Delta_1\omega=(n-1)\omega$ if and only if  $\delta^*\omega=0$, i.\,e. $\omega^{\sharp}$ is a Killing vector field.
An analogous assertion holds for $(\wt{M},\wt{g})$ with $n-1$ replaced by $n$.

If $\mu_1=(n-1)$, then $\wt{\mu}_{1,0}=n$ and the corresponding $1$-forms on $\widetilde{M}$ are duals of Killing vector fields as well.
If $\lambda_1=n$ (i.e.\ the gradient of the corresponding eigenfunction is a conformal Killing vector field, see Remark \ref{conformalkilling}), then $\wt{\lambda}^{(2)}_{1,0}=n$ and the corresponding $1$-forms on $\widetilde{M}$ are also duals of Killing vector fields. Therefore, we obtain
\begin{align}
\ker(\wt{\delta}^*)=E(\wt{\Delta}_1,n)=E(\Delta_1,n-1)\oplus E(\Delta_0,n)=\ker(\delta^*)\oplus E(\Delta_0,n).
\end{align}
\end{rem}
\subsection{The Einstein operator}
In this section, we again assume that $(M^n,g)$ is Einstein with constant $n-1$. In addition, we assume that the dimension of $M$ is $n\geq3$.
 We have the $L^2$-orthogonal splitting
\begin{align}\label{eq:decomp}
C^{\infty}(S^2M)=C^{\infty}(M)\cdot g\oplus \left\{n\nabla^2v+\Delta v\cdot g\mid v\in C^{\infty}(M) \right\}\oplus \delta^*(D(M))\oplus TT(M),
\end{align}
where $TT(M)=\left\{h\in C^{\infty}(S^2M)\mid \trace h=0,\delta h=0\right\}$ denotes the space of transverse traceless tensors on $h$.
The Einstein operator has a block diagonal form with respect to this decomposition \cite[p. 130]{Bes08} (see  also \cite[Section 2]{Kro15c} for the refined version stated here). An analogous decomposition holds for its sine-cone $(\wt{M},\wt{g})$ although it admits conical singularities. In this case, \ref{eq:decomp} holds if we replace $C^{\infty}$ in each factor on both sides by the $L^2$-closure of $C^{\infty}_{cs}$, see \cite[Section 2]{Kro18} for details.
\begin{lem}\label{unbounded}
	Suppose that $(M,g)$ admits a $TT$-eigentensor $h$ of $\Delta_E$ with eigenvalue $\mu<-\frac{(n-1)^2}{4}$. Then, $\wt{\Delta}_E$ is unbounded from below. 
	\end{lem}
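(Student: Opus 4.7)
The strategy is to construct a sequence of test tensors in $TT(\wt{M})$ concentrated near the singular point $\theta=0$ whose Rayleigh quotient against $\wt{\Delta}_E$ tends to $-\infty$, exploiting the fact that the sine-cone is asymptotically the Ricci-flat cone $(\ol M,\ol g)$ near $\theta=0$ together with a Hardy-type argument.

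First, I would lift the TT-eigentensor $h$ along the projection $\wt M\to M$ and verify that for every $\varphi\in C^{\infty}_c((0,\pi))$ the tensor $k_\varphi:=\varphi(\theta)\cdot h$ lies in $TT(\wt M)$. Tracelessness is immediate because $\wt g^{-1}|_M=\sin^{-2}\!\theta\cdot g^{-1}$ and $\trace_g h=0$; divergence-freeness is a short warped-product computation using $\delta_g h=0$ and the absence of $d\theta$-components in $h$.

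Next, I would compute $\wt{\Delta}_E(\varphi h)$ by applying the warped-product expressions for $\wt\nabla$ and for the curvature endomorphism $\mathring{\wt R}$ acting on symmetric tensors that are purely tangential to the $M$-fibers. The outcome is an identity
\begin{align*}
\wt{\Delta}_E(\varphi h)=(L\varphi)\cdot h,
\end{align*}
where $L$ is a Sturm-Liouville operator on $(0,\pi)$ of the form
\begin{align*}
L\varphi=-\varphi''-n\cot\theta\cdot\varphi'+\frac{\mu+c_n}{\sin^2\theta}\,\varphi+d_n\,\varphi,
\end{align*}
for constants $c_n,d_n$ coming from the curvature of $\wt g$. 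The key point is the coefficient of the $\sin^{-2}\theta$-singularity: since $\wt g=d\theta^2+\sin^2\theta\, g$ is asymptotic to the Ricci-flat cone $dr^2+r^2g$ as $\theta\to 0$, this coefficient must agree with the one obtained from the cone computation in \cite{Kro15c}, namely $c_n=0$. Alternatively, the same value can be read off from the isometry $(\wh M,\wh g)\cong(\R_+\times\wt M,ds^2+s^2\wt g)$ by comparing the actions of $\wh\Delta_E$ in its two expressions, one coming from the cone over $\wt M$ and one coming from $\R\times\ol M$, on sections of the form $\varphi(\theta)\cdot h$. Thus $L$ has leading singular behaviour $-\partial_\theta^2-\tfrac{n}{\theta}\partial_\theta+\mu/\theta^2$ near $\theta=0$.

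Finally, I would exploit the Hardy-type argument. After integration by parts, the Rayleigh quotient reads
\begin{align*}
\frac{\langle\wt{\Delta}_E k_\varphi,k_\varphi\rangle_{L^2(\wt M)}}{\|k_\varphi\|^2_{L^2(\wt M)}}
=\frac{\displaystyle\int_0^\pi\!\Bigl(|\varphi'|^2+\tfrac{\mu}{\sin^2\theta}\varphi^2+d_n\varphi^2\Bigr)\sin^n\theta\,d\theta}{\displaystyle\int_0^\pi\!\varphi^2\sin^n\theta\,d\theta}.
\end{align*}
Writing $\mu=-\tfrac{(n-1)^2}{4}-\gamma_0^2$ with $\gamma_0>0$ and picking $\gamma\in(0,\gamma_0)$, I would take test functions $\varphi_j(\theta)=\chi_j(\theta)\,\theta^{-(n-1)/2}\sin(\gamma\log\theta)$ with $\chi_j$ a smooth cutoff supported in $[\theta_0^{(j)},\theta_1]$ for a fixed small $\theta_1>0$ and $\theta_0^{(j)}\to 0^+$. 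The change of variables $t=\log\theta$ reduces the principal part of the quadratic form to $\int[(u')^2-(\gamma_0^2-\gamma^2)u^2+O(e^{2t})u^2]\,dt$ with $u=\sin(\gamma t)$ on an expanding interval of length $|\log\theta_0^{(j)}|$, so the numerator behaves like $-(\gamma_0^2-\gamma^2)\cdot|\log\theta_0^{(j)}|$ while the denominator stays bounded. Hence the Rayleigh quotient tends to $-\infty$, proving that $\wt{\Delta}_E$ is unbounded from below.

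The main obstacle is the precise computation of $L$ in the second step, since the curvature contribution $-2\mathring{\wt R}$ on tensors tangential to $M$ requires either a warped-product curvature computation or a cross-check via the cone correspondence with $\wh M$. Once the $\sin^{-2}\theta$-coefficient is identified with $\mu$ (i.e.\ $c_n=0$), the concluding Hardy-type step is essentially a one-dimensional calculation.
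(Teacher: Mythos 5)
Your strategy is the same as the paper's: test with separable tensors built from $h$ and drive the Rayleigh quotient to $-\infty$ by concentrating near the tip and invoking a Hardy-type inequality. The paper (citing \cite[Section 2]{Kro15c} for the quadratic-form identity) takes $\wt h=\varphi\sin^2\theta\,h$ and uses a rescaled Hardy-near-optimizer with shrinking support, while you take $k_\varphi=\varphi\,h$ and the oscillatory functions $\theta^{-(n-1)/2}\sin(\gamma\log\theta)$; both choices of radial test function are standard.

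There is, however, a normalization inconsistency that infects your formulas. You set $k_\varphi=\varphi\,h$ but write down the identities that hold for $\varphi\sin^2\theta\,h$. Since on $M$-tangent directions $\wt g^{ij}=\sin^{-2}\theta\,g^{ij}$, one has $|\varphi h|^2_{\wt g}=\varphi^2\sin^{-4}\theta\,|h|^2_g$, so with $k_\varphi=\varphi h$ the correct radial measure is $\sin^{n-4}\theta\,d\theta$, not $\sin^n\theta\,d\theta$. Correspondingly, the pointwise identity is
\begin{equation*}
\wt\Delta_E(\varphi\,h)=\bigl[-\varphi''+(4-n)\cot\theta\,\varphi'+(\mu+2n-6)\sin^{-2}\theta\,\varphi+(4-2n)\varphi\bigr]h,
\end{equation*}
so the first-order coefficient is $(4-n)\cot\theta$ rather than $-n\cot\theta$, and $c_n=2n-6$, not $0$. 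Your cross-check against the Ricci-flat cone makes the same slip: Lemma \ref{ttlemma} gives $\ol\Delta_E(\varphi\,r^2h)=r^2(-\varphi''-n r^{-1}\varphi'+\mu r^{-2}\varphi)h$ for the parametrization $\varphi\,r^2h$; reparametrized to $\varphi\,h$ it produces the same $(4-n)r^{-1}$ and $\mu+2n-6$ coefficients. So the cone computation does not supply $c_n=0$ for your choice of test tensor, and the step you flag as "the main obstacle" is resolved incorrectly.

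What saves your conclusion is that the two errors cancel at the threshold: the Hardy constant for the measure $\sin^{n-4}\theta\,d\theta$ is $(n-5)^2/4$ rather than $(n-1)^2/4$, and $\mu+2n-6<-(n-5)^2/4$ is equivalent to $\mu<-(n-1)^2/4$. So the Rayleigh quotient really does diverge exactly under the stated hypothesis. Nevertheless, as written, both your derivation of the quadratic form and your verification of the key coefficient $c_n$ are wrong. The cleanest fix is simply to replace $k_\varphi=\varphi\,h$ by $k_\varphi=\varphi\sin^2\theta\,h$ throughout (the paper's parametrization), after which your formulas for $L$, the $\sin^n\theta\,d\theta$ measure, and $c_n=0$ are all correct and the rest of the argument goes through unchanged.
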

\begin{proof}
Let $\wt{h}$ be of the form $\wt{h}=\varphi \sin^2 h$. Then, we know from \cite[Section 2]{Kro15c} that
\begin{align*}
(\wt{\Delta}_E\wt{h},\wt{h})_{L^2(\wt{g})}=\int_0^{\pi}(\dot{\varphi})^2\sin^nds+\mu\int_0^{\pi}\varphi^2\sin^{n-2}ds,\qquad \left\|\wt{h}\right\|_{L^2{(\wt{g})}}^2=\int_0^{\pi}\varphi^2\sin^nds,
\end{align*}
where we assumed that $\left\|h\right\|_{L^2(g)}=1$ for simplicity.	
We know by the Hardy inequality that
\begin{align*}
\inf_{\varphi\in C^{\infty}_{cs}((0,\infty))\setminus\left\{0\right\}}\frac{\int_{0}^{\pi}(\dot{\varphi})^2s^nds}{\int_0^{\pi}\varphi^2s^{n-2}ds}=\frac{(n-1)^2}{4}.
\end{align*}
Let $\epsilon_i$ be a zero sequence and 
$\varphi_i\in C^{\infty}_{cs}((0,\infty))$ be a sequence 
such that
\begin{align*}
\frac{\int_{0}^{\pi}(\dot{\varphi_i})^2s^nds}{\int_0^{\pi}\varphi_i^2s^{n-2}ds}\leq\frac{(n-1)^2}{4}+\epsilon_i.
\end{align*}
Because the quotient above is invariant under scaling $s\mapsto\alpha s$, we may assume that $\supp(\varphi_i)\subset (0,\epsilon_i)$. We have $(1-\delta_i)s\leq \sin(s)\leq s$ for $ s\in (0,\epsilon_i)$ and a zero sequence $\delta_i$. Therefore,
\begin{align*}
\int_0^{\pi}(\dot{\varphi_i})^2\sin^nds+&\mu\int_0^{\pi}\varphi_i^2\sin^{n-2}ds\leq \int_0^{\pi}(\dot{\varphi}_i^2)s^nds+\mu(1-\delta_i)^{n-2}\int_0^{\pi}\varphi_i^2s^{n-2}ds\\
&\leq\left(\frac{(n-1)^2}{4}+\mu+\epsilon_i-(1-(1-\delta_i)^{n-2})\mu\right)\int_0^{\pi}\varphi_i^2s^{n-2}ds\\
&\leq -\epsilon\int_0^{\pi}\varphi_i^2s^{n-2}ds
\end{align*}
for some $\epsilon>0$ and $i$ large enough. Consequently,
\begin{align*}
\frac{\int_0^{\pi}(\dot{\varphi_i})^2\sin^nds+\mu\int_0^{\pi}\varphi_i^2\sin^{n-2}ds}{\int_0^{\pi}\varphi_i^2\sin^{n}ds}
\leq -\epsilon \frac{\int_0^{\epsilon_i}\varphi_i^2s^{n-2}ds}{\int_0^{\epsilon_i}\varphi_i^2s^{n}ds}\leq -\epsilon\cdot \epsilon_i^{-2}\to-\infty
\end{align*}
as $i\to\infty$
which proves the lemma.
\end{proof}

%In the following theorem, we will denote the smallest eigenvalue of $\Delta_1|_{D(M)}$ by $\mu_0$ if $\mu_0=(n-1)$ (c.\,f.\ Remark \ref{Killing}). This value has a special role because it is a lower bound for $\spectrum(\Delta_1)$ restricted to $D(M)$ and for $\omega \in D(M)$, $\Delta_1\omega=\mu_0\cdot\omega $ if and only if $\delta^*\omega=0$, i.e. $\omega^{\sharp}$ is a Killing field.
\begin{thm}\label{Einsteinspectrumsincone}Let $(M^n,g)$, $n\geq3$ be a smooth closed Einstein manifold of constant $n-1$.
Let
\begin{equation}\begin{split}
\spectrum(\Delta_0)&=\left\{\lambda_i\mid i\in\N_0\right\},\qquad
\spectrum(\Delta_1|_{D(M)})=\left\{\mu_i\mid i\in\N\right\},\\
\spectrum(\Delta_E|_{TT(M)})&=\left\{\kappa_i\mid i\in\N\right\},
\end{split}
\end{equation}
 and suppose that $\kappa_i\geq-\frac{(n-1)^2}{4}$ for all $i\in\N$.
%	Furthermore, let
%	\begin{equation}
%	\begin{split}
%	k_i&=-\frac{n-1}{2}+\sqrt{\frac{(n-1)^2}{4}+\lambda_i},\qquad l_i=-\frac{n-1}{2}+\sqrt{\frac{(n-1)^2}{4}+\mu_i+1},\\
%	m_i&=-\frac{n-1}{2}+\sqrt{\frac{(n-1)^2}{4}+\kappa_i}.
%	\end{split}
%	\end{equation}
	\begin{itemize}
		\item[(i)]	Suppose that $\lambda_1>n$ and $\mu_1>n-1$. Then the spectrum of $\wt{\Delta}_E$ restricted to
	 \begin{align}C^{\infty}(\widetilde{M})\cdot\wt{g}\oplus \left\{n\wt{\nabla}^2v+\wt{\Delta} v\cdot \wt{g}\mid v\in C^{\infty}(\widetilde{M}) \right\}
	 \end{align}
	  is given by
		\begin{align}
	\wt{\lambda}_{i,j}^{(1)}:=\eta_{n+1}(\xi_n(\lambda_i)+j)-2n,\qquad (i,j)\in\N_0\times\N_0,
	\end{align}
	and each of these eigenvalues has multiplicity 
	\begin{align}\mathrm{mult}(\lambda_{i,j}^{(1)})=2\mathrm{mult}(\lambda_i)\quad\text{ if }(i,j)\in \N_0\times\N_0\setminus{\left\{(0,0),(0,1)\right\}},\end{align}
	 and $\mathrm{mult}(\lambda_{0,i}^{(1)})=\mult(\lambda_0)=1$ if $i=0,1$. The spectrum of $\wt{\Delta}_E$ restricted to $\wt{\delta}^*(D(\widetilde{M}))$ is given by
	\begin{align}
%	\begin{split}
 \label{lambda2} \wt{\lambda}^{(2)}_{i,j}&=\eta_{n+1}(\xi_n(\lambda)+j)-1-n,\qquad(i,j)\in \N\times\N_0,\\
\label{mu1}\wt{\mu}^{(1)}_{i,j}&=\eta_{n+1}(\xi_n(\mu_i+1)+j)-1-n,\qquad (i,j)\in \N\times\N_0,
%\end{split}
	\end{align}
	with $\mult(\wt{\lambda}^{(2)}_{i,j})=\mult(\lambda_i)$ and $\mult(\wt{\mu}^{(1)}_{i,j})=\mult(\mu_i)$.
	Finally, the spectrum of $\wt{\Delta}_E$ restricted to $TT(\widetilde{M})$ is given by
	\begin{align}
%	\begin{split}
 \label{lambda3}	\wt{\lambda}_{i,j}^{(3)}&=\eta_{n+1}(\xi_n(\lambda_i)+j),\qquad (i,j)\in \N\times\N_0,	\\
\label{mu2}	\wt{\mu}_{i,j}^{(2)}&=\eta_{n+1}(\xi_n(\mu_i+1)+j),\qquad (i,j)\in \N\times\N_0,\\
	\wt{\kappa}_{i,j}^{(1)}&=\eta_{n+1}(\xi_n(\kappa_i)+j),\qquad (i,j)\in \N\times\N_0,		
%	\end{split}
	\end{align}
	with $\mult(\wt{\lambda}_{i,j}^{(3)})=\mult(\lambda_i)$, $\mult(	\wt{\mu}_{i,j}^{(2)})=\mult(\mu_i)$ and $\mult(\wt{\kappa}_{i,j}^{(2)})=\mult(\kappa_i)$.
\item[(ii)] If $\lambda_1=n$, then (i) holds up to the following changes:
\begin{itemize}\item[(a)] $\mathrm{mult}(\wt{\lambda}_{1,0}^{(1)})=\mathrm{mult}(\lambda_1)$
	           \item[(b)] The value $\tilde{\lambda}_{1,0}^{(2)}$ does not appear in \eqref{lambda2}
	           \item[(c)] The values $\tilde{\lambda}_{1,j}^{(3)}$, $j\in\N_0$ do not appear in \eqref{lambda3}
\end{itemize}
\item[(iii)] If $\mu_1=n-1$, then (i) holds up to the following changes:
\begin{itemize}
	\item[(a)] The value $\tilde{\mu}_{1,0}^{(1)}$ does not appear in \eqref{mu1}
	\item[(b)] The values $\tilde{\mu}_{1,j}^{(2)}$, $j\in\N_0$ do not appear in \eqref{mu2}
\end{itemize}
 \end{itemize}
\end{thm}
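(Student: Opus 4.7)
The plan is to exploit the $L^2$-orthogonal decomposition \eqref{eq:decomp} on $(\wt M, \wt g)$ (analogously for the sine-cone), which is preserved by $\wt\Delta_E$ despite the conical singularity, and to reduce the problem to four separate spectral computations, one per block-diagonal summand. The assumption $\kappa_i \geq -(n-1)^2/4$ combined with Lemma \ref{unbounded} guarantees that $\wt\Delta_E$ is bounded below on $TT(\wt M)$ and hence has a pure $L^2$-discrete spectrum, so that the enumeration below is exhaustive.

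The first three summands I would handle purely via the commutation identities \eqref{commutation} together with Theorems \ref{sinconefunctions} and \ref{sincone1forms}, using that $\wt g$ has Einstein constant $n$. The identity $\wt\Delta_E(\wt v\,\wt g) = (\wt\Delta \wt v - 2n\wt v)\,\wt g$ turns Laplace eigenfunctions into conformal eigentensors with eigenvalues $\wt\lambda_{i,j} - 2n = \wt\lambda^{(1)}_{i,j}$. The Hessian map $\wt v \mapsto n\wt\nabla^2 \wt v + \wt\Delta \wt v\,\wt g$ intertwines $\wt\Delta - 2n$ with $\wt\Delta_E$ as well, yielding the same values with doubled multiplicity except where this map has a nontrivial kernel. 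That kernel contains constants (explaining the reduction at $(0,0)$) and conformal Killing functions on $\wt M$; the function $\cos\theta$ is always such by Remark \ref{conformalkilling}, explaining the $(0,1)$-reduction, and in case (ii) the lift $\sin\theta\cdot v$ of the $\lambda_1=n$-eigenfunction is also conformal Killing (by Obata's rigidity $(M,g)$ is then the round sphere), producing (ii)(a). On the $\wt\delta^*(D(\wt M))$-summand, the identity $\wt\Delta_E(\wt\delta^*\omega) = \wt\delta^*(\wt\Delta_1 \omega - n\omega)$ combined with Theorem \ref{sincone1forms} produces \eqref{lambda2} and \eqref{mu1}; the removal of $\wt\lambda^{(2)}_{1,0}$ in case (ii) and $\wt\mu^{(1)}_{1,0}$ in case (iii) happens because the corresponding $1$-forms are Killing on $\wt M$ and hence killed by $\wt\delta^*$, as classified in Remark \ref{Killing}.

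The main work lies in the TT-summand, where I would mimic the strategy of Lemmas \ref{Vlambdaoneforms} and \ref{Wmuoneforms} and introduce $\wt\Delta_E$-invariant subspaces
\begin{equation*}
V^{TT}_\kappa,\quad V^{TT}_\mu,\quad V^{TT}_\lambda \subset L^2(TT(\wt M)),
\end{equation*}
indexed respectively by TT-eigentensors $h\in E(\Delta_E|_{TT},\kappa)$, divergence-free eigen-$1$-forms $\omega\in E(\Delta_1|_{D(M)},\mu)$ and Laplace eigenfunctions $v\in E(\Delta_0,\lambda)$ on $(M,g)$. Their elements are $\theta$-dependent combinations of the bundle components of $S^2\wt M$ generated from the base object (scalar in $d\theta^2$, $1$-form in $d\theta\cdot(\cdot)$, symmetric $2$-tensor in the $M$-direction), and density of $\bigoplus V^{TT}_\kappa \oplus \bigoplus V^{TT}_\mu \oplus \bigoplus V^{TT}_\lambda$ in $L^2(TT(\wt M))$ will follow from the $L^2$-decomposition at each fixed $\theta$. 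On each invariant subspace, the eigenvalue problem is transferred via the isometry \eqref{isom} to a harmonicity problem for symmetric $2$-tensors on $(\wh M,\wh g)$, resolved by the polynomial-space machinery of Theorem \ref{sinconefunctions}: for $V^{TT}_\kappa$ the basic lift is $r^{\xi_n(\kappa)}\cdot r^2 h$ on $\ol M$, directly verified to be harmonic and TT by a cone computation in the spirit of Lemma \ref{gradhessdr}, and the polynomial spaces $P_{\xi_n(\kappa),j}$ on $\wh M$ then produce the family $\wt\kappa^{(1)}_{i,j}$.

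The hard part will be the coupled systems arising from $V^{TT}_\mu$ and especially $V^{TT}_\lambda$, where the lift to $\wh M$ must assemble several bundle components of $S^2\wh M$ (in the $V^{TT}_\lambda$-case, several independent polynomial coefficients in $r,z$ attached to $v\,dr^2$, $dr\cdot dv$, $r\,\nabla^2 v$, $v\,g$ and $v\,dz^2$, plus compensating trace terms) subject simultaneously to tracelessness, divergence-freeness and harmonicity. This will force a substantial extension of the auxiliary Lemmas \ref{coupled1forms} and \ref{formulas1} to symmetric $2$-tensors on the cone, precisely the tedious calculations foreshadowed in the introduction. The exceptional cases (ii)(c) and (iii)(b) then appear naturally: for $\lambda_1 = n$ the trace-free Hessian $n\nabla^2 v + \Delta v\cdot g$ vanishes on the conformal Killing eigenfunction, and for $\mu_1 = n-1$ the operator $\delta^*$ kills the Killing $1$-form, in each case collapsing the would-be family $\wt\lambda^{(3)}_{1,j}$ or $\wt\mu^{(2)}_{1,j}$ to zero.
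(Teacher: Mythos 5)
Your proposal follows essentially the same route as the paper: decompose $S^2\wt M$ via \eqref{eq:decomp}, handle the scalar, Hessian, and $\wt\delta^*$ summands by the commutation identities together with Theorems \ref{sinconefunctions} and \ref{sincone1forms}, and attack the $TT$-summand by further splitting into three $\wt\Delta_E$-invariant subspaces (the paper's $V_\kappa$, $W_\mu$, $X_\lambda$, your $V^{TT}_\kappa$, $V^{TT}_\mu$, $V^{TT}_\lambda$), each resolved by lifting to the Ricci-flat cone $\wh M$ and solving a coupled polynomial system in $(r,z)$ with the machinery of Lemmas \ref{coupled1forms}, \ref{formulas1} and their $S^2$-analogues. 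Your explanations of (ii)(a), (ii)(b), (iii)(a) via the kernel of the Hessian map and Remark~\ref{Killing} also match the paper.

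However, your explanation of the exceptional cases (ii)(c) and (iii)(b) has a genuine gap. You claim the families $\wt\lambda^{(3)}_{1,j}$ and $\wt\mu^{(2)}_{1,j}$ disappear \emph{because} the trace-free Hessian of $v$ (resp.\ $\delta^*\omega$) vanishes, which would ``collapse the family to zero.'' But the vanishing of that single term in the ansatz $\wt h^{(3)}$ (resp.\ $\wt h^{(2)}$) does not make $X_\lambda$ (resp.\ $W_\mu$) trivial: the remaining combinations $\psi\cdot\sin(\theta)\,dv\odot d\theta + \chi v(n\,d\theta\otimes d\theta-\sin^2\theta\, g)$ (resp.\ $\psi\cdot\sin(\theta)\,\omega\odot d\theta$) are a priori still nonzero. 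The actual argument in the paper (the Lemma immediately following the definition of $V_\kappa,W_\mu,X_\lambda$) imposes the $TT$-conditions on $\wt M$, which reduce to an ODE system in $\theta$, and then shows by an energy/growth estimate near $\theta\to 0,\pi$ that nontrivial solutions of this system decay like $\sin^{-(n-1-\epsilon)}$ and hence fail to be in $L^2(\wt g)$ for $n\geq 3$. Without this integrability analysis, one cannot conclude that $X_n$ and $W_{n-1}$ are trivial; they are nontrivial as formal $\theta$-dependent tensor spaces, and it is only the weighted $L^2$-integrability at the conical tips that kills them. Your proposal would need to be supplemented by exactly this ODE/decay argument to justify (ii)(c) and (iii)(b).
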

\begin{rem}
	The theorem may still hold for singular manifolds and in dimensions $n=1,2$, provided that 
%	eigentensors $v\in C^{\infty}(M)$ and eigenfunctions $\omega\in D(M)$, provided that
	 we have $dv,\nabla^2v,\delta^*\omega\in L^2$. It will hold for iterative sequences of $\sin$-cones builded up on smooth Einstein manifolds. In low dimensions, $D(M)$ and $TT(M)$ may be finite dimensional in low dimensions which yields a slight technical simplification.
\end{rem}
\noindent By the commutation identities in \eqref{commutation}, it remains to compute the spectrum of $\wt{\Delta}_E$ on $TT$-tensors, since the other parts of the spectrum have already been computed in Theorem \ref{sinconefunctions} and Theorem \ref{sincone1forms}. The cases (iia) and (iib) and (iiia) follow from Remark \ref{Killing} and the fact that the kernel of the map
\begin{align}
\Psi:C^{\infty}(M)\to C^{\infty}(S^2M),\qquad v\mapsto \nabla^2v+\Delta v\cdot g
\end{align}
is given by $E(\Delta ,0)\oplus E(\Delta,n)$, see \cite[Lemma 2.4]{Kro15c}.
To compute the spectrum of $\wt{\Delta}_E$ on $TT$-tensors, we first introduce the notations
\begin{align*}\wt{h}^{(1)}&=\varphi\cdot \sin(\theta)^2h,\\
\wt{h}^{(2)}&=\varphi\cdot \sin(\theta)^2\delta^*\omega+\psi\cdot \sin(\theta)\omega\odot d\theta,\\
\wt{h}^{(3)}&=\varphi\cdot \sin(\theta)^2(\nabla^2v+\Delta v\cdot g)+\psi\cdot \sin(\theta)dv\odot d\theta+\chi v(nd\theta\otimes d\theta-\sin(\theta)^2g),
\end{align*}
with $h\in TT(M)$, $w\in D(M)$ and $v\in C^{\infty}(M)$, and the spaces
\begin{align*}
V_{\kappa}&=\left\{\wt{h}^{(1)} \in L^2(TT(\widetilde{M})) \mid \varphi:(0,\pi)\to\R,\quad h\in E(\Delta_E|_{TT(M)},\kappa) \right\},\\
W_{\mu}&=\left\{\wt{h}^{(2)}\in L^2(TT(\widetilde{M})) \mid \varphi,\psi:(0,\pi)\to\R,\quad \omega \in E(\Delta_1|_{D(M)},\mu) \right\},\\
X_{\lambda}&=\left\{\wt{h}^{(3)}\in L^2(TT(\widetilde{M})) \mid \varphi,\psi,\chi:(0,\pi)\to\R,\quad v \in E(\Delta_0,\lambda) \right\}.
\end{align*}
Here, the symmetric tensor product of $\omega,\eta\in C^{\infty}(T^*M)$ is defined with the convention that $\omega\odot\eta=\omega\otimes\eta+\eta\otimes\omega\in C^{\infty}(S^2M)$. 
%Note that if $\lambda=\lambda_0=0$, $V_{\lambda}=0$. Moreover, $\mu=\mu_0=n-1$, $\delta^*\omega=0$ and therefore $W_{\mu}=0$ in this case.
\begin{lem}The following of the above spaces are trivial:
	\begin{itemize}
		\item[(i)] If $\mu=n-1$, then $W_{\mu}=0$.
		\item[(ii)] If $\lambda=0$, then $V_{\lambda}=0$.
		\item[(iii)] If $\lambda=n$, then $V_{\lambda}=0$.
	\end{itemize}
\end{lem}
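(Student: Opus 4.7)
In each of the three cases, the extra hypothesis forces one of the three building blocks of the relevant ansatz to vanish identically. The element reduces to a linear combination of the remaining pieces with coefficients that depend only on $\theta$, and I will then impose the two defining conditions of $TT(\wt{M})$ --- tracelessness with respect to $\wt{g}$ and vanishing $\wt{\delta}$ --- to get a system of linear ODEs in $\theta$ for the scalar coefficients. The goal is to show that every solution of this system which belongs to $L^2(\wt{M})$ (weighted by $\sin^n(\theta)\,d\theta\,dV_g$) is identically zero.

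\textbf{Part (i).} By Remark \ref{Killing}, $\mu=n-1$ means $\omega^{\sharp}$ is a Killing field on $M$, so $\delta^{*}\omega=0$ and the first summand of $\wt{h}^{(2)}$ vanishes. Only $\wt{h}^{(2)} = \psi\sin(\theta)\,\omega\odot d\theta$ remains, and its $\wt{g}$-trace is automatically zero because $\omega$ is tangent to the $M$-factor while $d\theta$ is normal to it. I will then compute $\wt{\delta}\wt{h}^{(2)}$ on the sine-cone using the warped-product divergence identities of the kind recorded in the appendix (cf.\ Lemma \ref{hatdiv} combined with the isometry \eqref{isom}), and use $\delta\omega=0$ together with the Killing identity for $\omega$ to conclude that the $\wt{\delta}=0$ condition is a single first-order linear ODE for $\psi$. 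The characteristic behaviour at the two tips $\theta\to 0,\pi$ forces any nontrivial solution to blow up faster than $\sin^{-\frac{n+1}{2}}(\theta)$, ruling out membership in $L^2(\wt{M})$ and hence forcing $\psi=0$.

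\textbf{Parts (ii) and (iii).} In part (ii), $\lambda=0$ means $v$ is constant; then $dv=0$ and $\nabla^{2}v=0$, so $\wt{h}^{(3)}$ reduces to $\chi v\,(n\,d\theta\otimes d\theta-\sin^{2}(\theta)g)$. Tracelessness in $\wt{g}$ is automatic (the $d\theta\otimes d\theta$-part and the $g$-part have $\wt g$-traces $1$ and $n$ respectively, balancing exactly). The $\wt{\delta}=0$ condition becomes a single ODE on $\chi$, and a direct asymptotic analysis at $\theta=0,\pi$ shows that its nontrivial solutions are not in $L^{2}(\wt{M})$. In part (iii), $\lambda=n$ combined with Obata's equation gives $n\nabla^{2}v+\Delta v\cdot g=0$ (so the first term of $\wt{h}^{(3)}$ vanishes, using the trace-free representative $n\nabla^{2}v+\Delta v\cdot g$ from the decomposition \eqref{eq:decomp}), leaving
\[
\wt{h}^{(3)}=\psi\sin(\theta)\,dv\odot d\theta+\chi v\,(n\,d\theta\otimes d\theta-\sin^{2}(\theta)g).
\]
Tracelessness is again automatic, and the two components of $\wt{\delta}\wt{h}^{(3)}=0$ (the $dv$-part and the $d\theta$-part, separated by $L^{2}$-orthogonality on $M$ using $\Delta v=nv\neq 0$) yield a coupled first-order linear $2\times 2$ system in $(\psi,\chi)$. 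Reducing to a second-order ODE and computing the indicial exponents at $\theta=0,\pi$ shows that both fundamental solutions violate the $L^{2}$ weight condition.

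\textbf{Main obstacle.} The routine trace computations are immediate; the technical heart is the asymptotic analysis at the conical tips. With measure $\sin^{n}(\theta)\,d\theta$, the $L^{2}$-admissible decay window for the coefficient functions is $\sin^{\alpha}(\theta)$ with $\alpha>-\tfrac{n+1}{2}$, and I must show that every indicial root of the divergence ODE lies outside this window. This is clean in (i) and (ii), where the ODE is scalar, but in (iii) the coupling between $\psi$ and $\chi$ forces a genuine $2\times 2$ indicial computation, and that bookkeeping is where care is actually required.
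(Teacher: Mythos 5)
Your plan for parts (i) and (ii) is essentially the paper's: the hypothesis kills one summand, tracelessness is automatic from the ansatz, and the single divergence-free ODE forces the coefficient to be proportional to $\sin^{-(n+1)}$, which lies outside the $L^2$ window $\alpha>-\tfrac{n+1}{2}$. For part (iii) you and the paper take genuinely different routes after arriving at the coupled first-order system in $(\chi,\psi)$. The paper introduces the energy $E=\chi^2+\psi^2$, derives the differential inequality $\partial_\theta E\geq -2(n-\epsilon)\tfrac{\cos}{\sin}E$ near $\theta=\pi$, exploits the involution $(\chi(\theta),\psi(\theta))\mapsto(\chi(\pi-\theta),-\psi(\pi-\theta))$ to get the symmetric estimate near $\theta=0$, and concludes $\int_0^\pi E\sin^n\,d\theta=\infty$; this avoids explicitly solving anything. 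You propose reducing to a second-order ODE and computing indicial exponents at both tips, which also works, and that would require arguing that no nontrivial linear combination of the two fundamental solutions is $L^2$-admissible at \emph{both} ends simultaneously. If you go this route, note the system actually decouples: $u=\chi+\psi$ and $w=\chi-\psi$ satisfy
\begin{align*}
u'=\Bigl(\sin^{-1}-(n+1)\tfrac{\cos}{\sin}\Bigr)u,\qquad w'=-\Bigl(\sin^{-1}+(n+1)\tfrac{\cos}{\sin}\Bigr)w,
\end{align*}
with explicit solutions $u\propto\tan(\theta/2)\sin^{-(n+1)}$ and $w\propto\cot(\theta/2)\sin^{-(n+1)}$, each of which blows up at least like $\sin^{-n}$ at both tips, so the $2\times 2$ indicial bookkeeping you were worried about collapses to two scalar checks. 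Either method closes the argument; the paper's energy trick is shorter because it never solves the ODE.
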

\begin{proof}
		Recall from Remark \ref{Killing} that if $\mu=n-1$, $\omega^{\sharp}$ is a Killing field, hence $\delta^*\omega=0$. Therefore, any element of $W_{\mu}$ is of the form $\psi\cdot \sin(\theta)\omega\odot d\theta$. By \eqref{tildelemma1}, 
\begin{align}0=\wt{\delta}(\psi\cdot \sin(\theta)\omega\odot d\theta)=-[\partial_{\theta}\psi\cdot\sin+(n+1)\cos\cdot\psi]\omega,
\end{align}
so that $\psi$ is necessarily of the form $\psi= a\cdot \sin^{-(n+1)}$ for some $a\in \R$. This is a contradiction to the assumption $\psi\cdot \sin(\theta)\omega\odot d\theta\in L^2(TT(\widetilde{M}))$. This proves (i).\\
To prove (ii), we remark that $v$ is a constant function and therefore,
 any element of $V_{\lambda}$ is of the form $\chi v(nd\theta\otimes d\theta-\sin^2g)$. By \eqref{tildelemma3}, we have
 \begin{align}
 0&=\wt{\delta}(\chi v(nd\theta\otimes d\theta-\sin^2g))=\chi\cdot dv-n[\partial_{\theta}\chi+(n+1)\frac{\cos}{\sin}\chi]vd\theta
 ,
 \end{align}
 so that $\chi$ does necessarily vanish.\\
 If $v\in C^{\infty}(M)$ is such that $\Delta v=n\cdot v$, then $\nabla^2v+\Delta v\cdot g=0$. Therefore in this case, any element $\tilde{h}\in V_{\lambda}$ is of the form
 \begin{align}\label{elementinVlambda}
 \wt{h}=\psi\cdot \sin dv\odot d\theta+\chi v(nd\theta\otimes d\theta-\sin^2g),
 \end{align}
 and due to \eqref{tildelemma2},
 \begin{align}
 \wt{\delta}(\psi\sin dv\odot d\theta)=n\sin^{-1}
\psi v\cdot d\theta-[(n+1)\psi\cos+\partial_{\theta}\psi\cdot\sin ]dv.
\end{align}
Thus, we get
\begin{equation}
\begin{split}
0&=\wt{\delta}\wt{h}
=n[\sin^{-1}\psi -\partial_{\theta}\chi-(n+1)\frac{\cos}{\sin}\chi]v\cdot d\theta+[\chi-(n+1)\cos\cdot \psi
-\sin\cdot\partial_{\theta}\psi]dv,
\end{split}
\end{equation} which implies that the functions $\psi,\chi:(0,\pi)\to \R$ have to satisfy the coupled system
\begin{equation}\label{ODE}
\begin{split}
\partial_{\theta}\chi&=-(n+1)\frac{\cos}{\sin}\chi+\sin^{-1}\psi,\\
\partial_{\theta}\psi&=-(n+1)\frac{\cos}{\sin}\cdot \psi+\sin^{-1}\chi.
\end{split}
\end{equation}
For a solution $(\chi,\psi)$ of this system, we define its energy as $E=\chi^2+\psi^2$.
Let $\epsilon>0$ be arbitrary and choose $\delta>0$ so small that $(1+\epsilon)\cos(\theta)<-1$ for $\theta\in (\pi-\delta,\pi)$.
On this subinterval, the energy satisfies the differential inequality
\begin{equation}
\begin{split}
\partial_{\theta}E&=-2(n+1)\frac{\cos}{\sin}\chi^2-2(n+1)\frac{\cos}{\sin}\psi^2+4\sin^{-1}\chi\psi \\
&\geq -2(n+1)\frac{\cos}{\sin}(\chi^2+\psi^2)-2\sin^{-1}(\chi^2+\psi^2)\\
&\geq -2(n-\epsilon)\frac{\cos}{\sin}E,
\end{split}
\end{equation}
and thus, $E\geq C_1\cdot \sin^{-2(n-1-\epsilon)}$ for some $C_1>0$ on  $ (\pi-\delta,\pi)$ unless $E$ and therefore the pair $(\chi,\psi)$ is identically zero. Because space of solutions of \eqref{ODE} is invariant under the involution 
 \begin{align}
 (\chi(\theta),\psi(\theta))\to (\chi(\pi-\theta),-\psi(\pi-\theta)),
 \end{align}
an analogous estimate also holds on $ (0,\delta)$. However, the pointwise norm of $\tilde{h}$ is
\begin{align}
|\wt{h}|_{\wt{g}}^2=2\psi^2|dv|_g^2+2n^2\chi^2v^2,
\end{align}
and by integrating
\begin{align}
\left\|\wt{h}\right\|_{L^2(\wt{g})}^2=\int_0^{\pi}\int_M|\wt{h}|_{\wt{g}}^2\sin^n\dv_gd\theta
\geq 2n\left\|v\right\|_{L^2(g)}^2\int_0^{\pi}E\sin^nd\theta,
\end{align}
but due to the estimate on $E$ above, the integral on the right hand side is infinite for $n\geq3$ unless $\wt{h}\equiv0$.
%In dimension $n=2$,  one can use a refined version of the energy given by $E=\chi^2+a\psi^2+b\chi\psi$. For $a=2$ and $b=2/13$, it can be shown that $\partial_{\theta}E\geq -\frac{7}{2}\frac{\cos}{\sin}E$ close to $\pi$ and $0$ which implies that  $\wt{h}$ is not in $L^2$ in this case either. 
This finishes the proof of (iii).
\end{proof}
%\begin{rem} 
%	The restriction to $n\geq3$ in the above prove is no problem because we are just exclude trivial examples in cases $n=1,2$. However, for analytical reasons, this Lemma should also hold in low dimensions.	
%	In dimension $n=2$, it can also be shown that solutions of \eqref{ODE} yield a tensor $\wt{h}$ which is not in $L^2$. To prove this, one can use a refined version of the energy given by $E=\chi^2+a\psi^2+b\chi\psi$. For $a=2$ and $b=2/13$, it can actually be shown that $\partial_{\theta}E\geq -\frac{7}{2}\frac{\cos}{\sin}E$ close to $\pi$ and $0$. 
%\end{rem}

The lemma above explains the cases (iic) and (iiib) in Theorem \ref{Einsteinspectrumsincone}.
 Therefore, we may assume $\lambda>n$  and $\mu_>n-1$  from now on.
We have seen in \cite[Section 2]{Kro15c} that $\wt{\Delta}_E$ preserves these spaces and that
\begin{align}
L^2(TT(\widetilde{M}))=\left(\bigoplus_{i=1}^{\infty}V_{\kappa_i}\right)\bigoplus\left(\bigoplus_{i=1}^{\infty}W_{\mu_i}\right)\bigoplus\left(\bigoplus_{i=1}^{\infty}X_{\lambda_i}\right)
\end{align}
in the $L^2$-sense. The assertion of Theorem \ref{Einsteinspectrumsincone} now follows from Lemma \ref{V_kEinsteinoperator}, Lemma \ref{Wmu_Einsteinoperator} and Lemma \ref{X_lambdaEinsteinoperator} below.
\begin{lem}\label{V_kEinsteinoperator}
	The spectrum of $\wt{\Delta}_E$ restricted to $V_{\kappa}$ is given by 
	\begin{align}
\left\{\eta_{n+1}(\xi_n(\kappa)+j)\mid j\in \N_0\right\},
	\end{align}
	and each of these eigenvalues has the same multiplicity as $\kappa$ as an eigenvalue of $\Delta_E$.	
\end{lem}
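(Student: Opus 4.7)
The plan is to follow the same strategy as in the proofs of Theorem \ref{sinconefunctions} and Lemma \ref{Vlambdaoneforms}: construct explicit eigentensors on $(\wt{M},\wt{g})$ from homogeneous harmonic TT-tensors on the Ricci-flat cone $(\wh{M},\wh{g})$, and close the argument via a density approximation.

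Given $h\in E(\Delta_E|_{TT(M)},\kappa)$, my first step is to lift $h$ to the cone $(\ol{M},\ol{g})$ by setting $\ol{h}=\varphi(r)\cdot r^2 h$. The factor $r^2$ is natural because $\trace_{\ol{g}}(r^2 h)=\trace_g h=0$ and, as one verifies by direct computation using the cone connection, $\ol{\delta}(r^2 h)=0$ on $\ol{M}$, so $\ol{h}$ lies in the TT-sector. Using formulas analogous to Lemma \ref{hatdiv}, I expect the identity
\[
\ol{\Delta}_E(\varphi\cdot r^2 h)=r^2\left(-\partial^2_{rr}\varphi-\tfrac{n}{r}\partial_r\varphi+\varphi\cdot r^{-2}\Delta_E h\right),
\]
so that setting $\varphi(r)=r^k$ with $k=\xi_n(\kappa)$ turns $\ol{h}$ into a harmonic TT-tensor on $\ol{M}$. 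It is the absence of an eigenvalue shift on the right-hand side (in contrast to the $\Delta_1+1$ appearing in Lemma \ref{Vlambdaoneforms}) that is ultimately responsible for the clean $\eta_{n+1}(\xi_n(\kappa)+j)$ formula with no residual constant.

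Second, on $\wh{M}$ I introduce the polynomial spaces
\begin{align*}
P_{k,2j}&=\left\{\sum_{m=0}^{j}a_m r^{k+2m}z^{2(j-m)}\ol{h}\mid a_m\in\R\right\},\\
P_{k,2j+1}&=\left\{\sum_{m=0}^{j}a_m r^{k+2m}z^{2(j-m)+1}\ol{h}\mid a_m\in\R\right\},
\end{align*}
and verify $P_{k,0}\oplus P_{k,1}\subset\ker(\wh{\Delta}_E)$ together with $\wh{\Delta}_E(P_{k,j})\subset P_{k,j-2}$ for $j\geq 2$. The rank theorem produces one-dimensional intersections $H_{k,j}:=P_{k,j}\cap\ker(\wh{\Delta}_E)$, giving harmonic TT-tensors $\wh{h}_j$. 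Rewriting them in the sine-cone coordinate $s=\sqrt{r^2+z^2}$ yields $\wh{h}_j=s^{k+j}\cdot s^2\wt{h}_j$ for some $\wt{h}_j\in C^\infty\cap L^\infty(TT(\wt{M}))$. Applying the analogous identity for $\wh{\Delta}_E$ viewed as the Einstein operator on the cone $\wh{M}$ over $\wt{M}$ (with $n$ replaced by $n+1$ and $\Delta_E$ by $\wt{\Delta}_E$) then gives the radial ODE $(k+j)(k+j+n)=\wt{\Delta}_E\wt{h}_j/\wt{h}_j$, hence $\wt{\Delta}_E\wt{h}_j=\eta_{n+1}(\xi_n(\kappa)+j)\wt{h}_j$.

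To close the argument, the inductive decomposition $P_{k,2j}=H_{k,2j}\oplus s^2H_{k,2j-2}\oplus\cdots$ and uniform approximation of compactly supported radial functions by polynomials in $r$ and $z$ (exactly as in the proof of Theorem \ref{sinconefunctions}) show that the $\wt{h}_j$ are $L^2$-dense in $V_\kappa$, which delivers the claimed multiplicities. The main obstacle is establishing the precise form of $\ol{\Delta}_E$ on ansatz tensors of the shape $\varphi\cdot r^2 h$: because the Einstein operator carries the curvature term $-2\mathring{R}$, one must check that the cone curvature interacts with pulled-back TT-tensors in exactly the right way so that no spurious shift appears and that the computation stays confined to the TT-sector. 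This requires appendix-level calculations comparable to those underlying Lemma \ref{Vlambdaoneforms}, but with the added complication that $\mathring{R}$ on symmetric $2$-tensors potentially couples components across the $dr$-directions, so one needs to track these couplings carefully and show that they cancel.
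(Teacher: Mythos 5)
Your proposal follows essentially the same route as the paper's proof: lift $h$ to $\ol{h}=\varphi\cdot r^2h$ on the cone, use the identity $\overline{\Delta}_E\ol{h}=r^2(-\partial^2_{rr}\varphi\cdot h-n r^{-1}\partial_r\varphi\cdot h+\varphi r^{-2}\Delta_Eh)$ (established in the paper's Lemma \ref{ttlemma}) to obtain homogeneous harmonic TT-tensors on $\ol{M}$ and then on $\wh{M}$ via the $P_{k,j}/H_{k,j}$ filtration, restrict to $\wt{M}$ via $s=\sqrt{r^2+z^2}$, and close with the density argument from Theorem \ref{sinconefunctions}. The technical obstacle you flag — verifying that the curvature term $-2\mathring{R}$ of the cone metric produces no coupling or spurious shift on pulled-back TT-tensors — is precisely the content of Lemma \ref{ttlemma}, which the paper proves by explicit Christoffel-symbol computation, so the proposal is correct and complete modulo that appendix-level verification.
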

\begin{proof}
Let $h\in TT(M)$ and $\ol{h}\in C^{\infty}(S^2\overline{M})$ be of the form $\ol{h}=\varphi\cdot r^2h$ for some $\varphi\in C^{\infty}((0,r))$. By Lemma \ref{ttlemma}, this tensor satisfies
\begin{align}
\ol{\trace}\ol{h}=0,\qquad \ol{\delta}(\ol{h})=0,\qquad \ol{h}(\partial_r,.)=0,
\end{align}
and
\begin{align}
\overline{\Delta}_E\ol{h}=r^2(-\partial^2_{rr}\varphi\cdot h-n\cdot r^{-1}\partial_r\varphi\cdot h+\varphi\cdot r^{-2}\Delta_Eh),
\end{align}
so that, if $\Delta_Eh=\kappa\cdot h$, 
we have $\overline{\Delta}_E\ol{h}=0$, if $\varphi=r^{m}$, where $m=\xi_n(\kappa)$. Now consider some spaces of tensors on $\wh{M}$, given by
\begin{align*}
P_{m,2j}:=\left\{\sum_{l=0}^ja_l r^{m+2l}z^{2(j-l)}\ol{h}\mid a_l\in\R \right\},\qquad P_{m,2j+1}:=\left\{\sum_{l=0}^ja_l r^{m+2l}z^{2(j-l)+1}\ol{h} \mid a_l\in\R\right\}.
\end{align*}
It is straightforward to show that $P_{m,0}\oplus P_{m,1}\subset\ker(\wh{\Delta}_E)$ and that
for each $j\in \N$, $j\geq 2$, the Laplacian satisfies $\wh{\Delta}_E(P_{m,j})\subset P_{m,j-2}$. Because
$\dimn P_{m,j}=\dimn P_{m,j-2}+1$, $H_{m,j}:=P_{m,j}\cap \kernel\wh{\Delta}_E\neq\left\{0\right\}$ and it is also not hard to see that $H_{m,j}$ is one-dimensional for each $j\in \N_0$. Thus we obtain a sequence of $\wh{h}_j\in P_{m,j}$, hence $\wh{h}_j\in C^{\infty}(S^2\wh{M})\cap \ker(\wh{\Delta}_E)\cap TT(\wh{M})$, $j\in\N_0$, which can be written with respect to the variable $s=\sqrt{r^2+z^2}$ as
\begin{align}
\wh{h}_j=s^{m+j}\cdot s^2\wt{h}_j,\qquad  \wt{h}_j\in V_{\kappa},
\end{align}
and because for $\varphi=\varphi(s)\in C^{\infty}((0,\infty))$, Lemma \ref{ttlemma} implies
\begin{align}
\wh{\Delta}_E(\varphi\cdot s^2\wt{h}_j)=s^2(-\partial^2_{ss}\varphi\cdot\wt{h}_j-(n+1)\cdot s^{-1}\partial_s\varphi\cdot\wt{h}_j+\varphi\cdot s^{-2}\widetilde{\Delta}_E\wt{h}_j),
\end{align}
we get $\wt{\Delta}_E\wt{h}_j=\eta_{n+1}(m+j)\cdot\wt{h}_j$.
A density argument similar as in the proof of Theorem \ref{sinconefunctions} shows that the $\wt{h}_j$ span all of $V_{\kappa}$.
\end{proof}
\begin{lem}\label{Wmu_Einsteinoperator}
	The spectrum of $\wt{\Delta}_E$ restricted to $W_{\mu}$ is given by 
	\begin{align}
	\left\{\eta_{n+1}(\xi_n(\mu+1)+j)\mid j\in \N_0\right\},
	\end{align}
	and each of these eigenvalues has the same multiplicity as $\mu$ as an eigenvalue of $\Delta_1$.	
\end{lem}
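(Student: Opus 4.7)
The plan is to mirror the strategy used for Lemma \ref{Vlambdaoneforms} and Lemma \ref{V_kEinsteinoperator}: start with a harmonic model tensor on $\ol{M}$, build polynomial combinations in $r,z$ on $\wh{M}$ to generate harmonic TT tensors there, and finally translate via the isometry $\wh{M}\cong \R_+\times \wt{M}$ into eigentensors of $\wt{\Delta}_E$ in $W_\mu$.

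First, for $\omega\in D(M)$ with $\Delta_1\omega=\mu\omega$, I would make the ansatz
\begin{align*}
\ol{h}=\varphi(r)\cdot r^2\delta^*\omega+\psi(r)\cdot r\,\omega\odot dr
\end{align*}
on $\ol{M}$, with $\varphi,\psi$ to be determined. Using the appendix formulas (the direct analogues of Lemma \ref{hatdiv} and Lemma \ref{ttlemma} applied to $\delta^*\omega$ and $\omega\odot dr$), the conditions $\ol{\trace}(\ol{h})=0$ and $\ol{\delta}(\ol{h})=0$ force an algebraic relation between $\varphi$ and $\psi$, while $\ol{\Delta}_E\ol{h}=0$ then forces a single homogeneity degree $l=\xi_n(\mu+1)$. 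The commutation identities \eqref{commutation} give $\Delta_E\delta^*\omega=\delta^*(\Delta_1\omega-(n-1)\omega)=(\mu-(n-1))\delta^*\omega$, and a similar computation for $\omega\odot dr$ produces the corresponding cross-terms with $\delta^*\omega$; matching coefficients yields the exponent $l$ and fixes the ratio $\psi/\varphi$ uniquely up to scale. The exponent shift $\mu\mapsto\mu+1$ is the same as for the one-form case $V_\mu$, which is exactly what is needed to produce $\xi_n(\mu+1)$.

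Second, once such an $\ol{h}$ is in hand (scaling as $r^l$ times a fixed combination), define on $\wh{M}$ the polynomial spaces
\begin{align*}
P_{l,2j}&=\Bigl\{\sum_{m=0}^j a_m\,r^{l+2m}z^{2(j-m)}\ol{h}\Bigm| a_m\in\R\Bigr\},\\
P_{l,2j+1}&=\Bigl\{\sum_{m=0}^j a_m\,r^{l+2m}z^{2(j-m)+1}\ol{h}\Bigm| a_m\in\R\Bigr\},
\end{align*}
and verify $\wh{\Delta}_E(P_{l,j})\subset P_{l,j-2}$ together with $P_{l,0}\oplus P_{l,1}\subset \kernel(\wh{\Delta}_E)$, exactly as in the previous lemmas. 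A dimension count then produces one-dimensional kernels $H_{l,j}=P_{l,j}\cap \kernel\wh{\Delta}_E$, yielding harmonic TT tensors $\wh{h}_j=s^{l+j}\cdot s^2\wt{h}_j$ on $\wh{M}$ with $\wt{h}_j\in W_\mu$. Applying Lemma \ref{ttlemma}-style separation of variables in the cone variable $s$ over $\wt{M}$ gives $\wt{\Delta}_E\wt{h}_j=\eta_{n+1}(\xi_n(\mu+1)+j)\wt{h}_j$, which is the claimed list of eigenvalues with the correct multiplicity inherited from $\mu$.

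The hard part is the first step: the algebraic gymnastics on $\ol{M}$, because the Einstein operator mixes the two components $\delta^*\omega$ and $\omega\odot dr$ through the curvature term $\mathring{R}$ of the cone metric, so both $\ol{\Delta}_E(r^2\delta^*\omega)$ and $\ol{\Delta}_E(r\,\omega\odot dr)$ decompose non-trivially into contributions along both directions. Ensuring the system closes consistently with a single homogeneous degree is the main technical content. Once that is done, the polynomial construction on $\wh{M}$ and the $L^2$-density argument proceed in direct parallel with the proofs of Theorem \ref{sinconefunctions} and Lemma \ref{Vlambdaoneforms}: polynomial combinations of $(\ol{h}, r, z)$ are dense in $L^{\infty}$, and since each $\wh{h}_j$ restricts to a unique $\wt{h}_j\in W_\mu$, density of $\{\wt{h}_j\}_{j\in\N_0}$ in $W_\mu\cap L^2(TT(\wt{M}))$ follows.
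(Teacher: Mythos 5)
Your first step already breaks down: the model tensor you propose on $\ol{M}$ cannot be promoted, by multiplication with scalar polynomials in $r,z$, to the harmonic $TT$-tensors on $\wh{M}$ that actually restrict to elements of $W_\mu$. Writing $\wh{M}=\R_+\times\wt{M}$ with coordinate $s$, an element of $W_\mu$ corresponds on $\wh{M}$ to a tensor with $\wh{h}(\partial_s,.)=0$, i.e.\ $\wh{h}(r\partial_r+z\partial_z,.)=0$. Your ansatz $\ol{h}=\varphi r^2\delta^*\omega+\psi r\,\omega\odot dr$ has $\ol{h}(\partial_r,.)=\psi r\omega$, and multiplying $\ol{h}$ by any scalar $c(r,z)$ gives $\wh{h}(r\partial_r+z\partial_z,.)=c\cdot r\psi\cdot r\omega\neq 0$ unless $\psi\equiv0$. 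But forcing $\psi=0$ together with $\ol{\trace}\ol{h}=0$, $\ol{\delta}\ol{h}=0$ and $\ol{h}(\partial_r,.)=0$ forces $\varphi(\mu-(n-1))=0$, hence $\ol{h}\equiv0$ for $\mu>n-1$. So there is simply no nontrivial model on $\ol{M}$ of the kind you want, and in particular the polynomial spaces $P_{l,j}$ you build from it cannot contain tensors of the form $s^{l+j}\cdot s^2\wt{h}_j$ with $\wt{h}_j\in TT(\wt{M})$.

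The missing ingredient is the $dz$-direction, which does not exist on $\ol{M}$. The paper's proof therefore makes the ansatz directly on $\wh{M}$,
\begin{align*}
\Phi = s^k\wh{h} = P[r,z]\,dz\odot r\omega+Q[r,z]\,dr\odot r\omega+R[r,z]\,r^2\delta^*\omega,
\end{align*}
where the $dz\odot r\omega$ term is essential: the condition $\Phi(\partial_s,.)=0$ becomes $Q=-\tfrac{z}{r}P$, which cancels the $dr$-contribution against the $dz$-contribution in the radial direction, and the TT condition pins down $R$ in terms of $P$ and $Q$. The key computational point that makes the approach tractable — and that your proposal would have to reproduce in some form — is that after imposing these algebraic constraints, the full system $\wh{\Delta}_E\Phi=0$ (the equations \eqref{eq1.1}--\eqref{eq1.3}) is implied by the single scalar equation for $P$, and that equation says precisely that $P\cdot r\omega$ is a harmonic $1$-form on $\wh{M}$; this is Lemma \ref{formulas2}, and it reuses the analysis already done for the connection Laplacian on $1$-forms. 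Your proposal never invokes the constraint $\wh{h}(\partial_s,.)=0$, which is not an afterthought but the very reason the simple ``model on $\ol{M}$ times polynomial'' strategy (which does work for $V_\kappa$ and for the $1$-form case $V_\mu$, because there the model already satisfies the radial vanishing) fails for $W_\mu$.
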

\begin{proof}
If $\wt{h}\in W_{\mu}$ satisfies $\wt{\Delta}_E\wt{h}=\nu\cdot \wt{h}$ for some $\nu\geq -\frac{n^2}{4}$, Lemma \ref{ttlemma} implies that the tensor $\wh{h}=s^2\wt{h}$ satisfies
\begin{align}
\wh{\trace}(s^k\wh{h})=0,\qquad \wh{\delta}(s^k\wh{h})=0,\qquad s^k\wh{h}(\partial_s,.)=0,\qquad\wh{\Delta}_E(s^k\wh{h})=0,
\end{align} 
if $k=\xi_{n+1}(\nu)$.
To find such a tensor $\wh{h}$, we make the ansatz
\begin{align}\label{definitionPhi}
\Phi:=s^k\wh{h}=P[r,z]dz\odot r\omega+Q[r,z]dr\odot r\omega+R[r,z]r^2\delta^*\omega.
\end{align}
Demanding
\begin{align}
s^k\wh{h}\in TT(\wh{M}),\qquad s^k\wh{h}(\partial_s,.)=0,
\end{align}
 implies by direct computation (see also Lemma \ref{coupled1forms} in the appendix)
\begin{align}\label{determineQ,R}
Q[r,z]=-\frac{z}{r}P[r,z],\qquad r\cdot\partial_zP+r\cdot \partial_rQ+(n+1)Q=\frac{1}{2}(\mu-(n-1))R.
\end{align}
Thus, a choice of $P$ uniquely determines $Q$ and $R$.
We now want to translate the equation $\wh{\Delta}_E(s^k\wh{h})=0$ into an equation for $P,Q$ and $R$.
By using \eqref{divfree1formsA2}, we obtain
\begin{align}
\wh{\Delta}_E(P\cdot dz\odot r\omega)=\wh{\Delta}_2(P\cdot dz\odot r\omega)=dz\odot\wh{\Delta}_1(P\cdot r\omega)=(\wh{\Delta}_0P+r^{-2}(\mu+1)P)dz\odot r\omega.
\end{align}
We have
\begin{align}
\wh{\Delta}_E(dr\odot r\omega)=\ol{\Delta}_0(dr\odot r\omega)=-4\delta^*\omega+(\mu+n+3) dr\odot r^{-1}\omega,
\end{align}
where the second equation is \eqref{dm1}.
Moreover $dr\odot r\omega$ is parallel in the direction of $r$ and $z$, so that
\begin{equation}
\begin{split}
\wh{\Delta}_E(Qdr\odot r\omega)&=\wh{\Delta}_0Q\cdot dr\odot r\omega +Q[-4\delta^*\omega+(\mu+n+3) dr\odot r^{-1}\omega]\\
&=[\wh{\Delta}_0Q+r^{-2}(\mu+n+3)Q] dr\odot r\omega-4Q\cdot\delta^*\omega.
\end{split}
\end{equation}
Because $(\ol{M},\ol{g})$ is Ricci flat, \eqref{commutation} implies $\ol{\Delta}_E\circ \ol{\delta}^*=\ol{\delta}^*\circ\ol{\Delta}_1$. Using this relation,\eqref{divfree1formsA2} and \eqref{dm2}, we obtain
\begin{equation}\begin{split}
\wh{\Delta}_E(r^2\delta^*\omega)=\ol{\Delta}_E(r^2\delta^*\omega)=\ol{\Delta}_E\ol{\delta}^*(r^2\omega)=\ol{\delta}^*\ol{\Delta}_1(r^2\omega)&=(\mu+1-n)\ol{\delta}^*\omega\\
&=(\mu+1-n)[{\delta}^*\omega-r^{-2}dr\odot r\omega].
\end{split}
\end{equation}
The tensor $r^2\delta^*\omega$ is also parallel in the direction of $r$ and $z$. Thus,
\begin{align}
\wh{\Delta}_E(R\cdot r^2\delta^*\omega)=\wh{\Delta}_0R\cdot  r^2\delta^*\omega+(\mu+1-n)R[{\delta}^*\omega-r^{-2}dr\odot r\omega].
\end{align}
 Therefore, demanding that $\wh{\Delta}_E\Phi=0$ (where $\Phi$ was defined in \eqref{definitionPhi}) yields the system of equations
\begin{align}
\label{eq1.1}\wh{\Delta}P+r^{-2}(\mu+1)P&=0,\\
\label{eq1.2}\wh{\Delta}Q+r^{-2}(\mu+n+3)Q-r^{-2}(\mu+1-n)R&=0,\\
\label{eq1.3}\wh{\Delta}R+r^{-2}(\mu+1-n)R-4r^{-2}Q&=0.
\end{align}
In Lemma \ref{formulas2} below, we show that if $P$ solves \eqref{eq1.1} and $Q,R$ are defined by \eqref{determineQ,R}, then $Q,R$ solve the system \eqref{eq1.2}-\eqref{eq1.3}. In fact, \eqref{eq1.1} exactly says that $P\cdot r\omega$ is a harmonic $1$-form. We have considered harmonic $1$-forms in the previous section. In particular we found out that for each $j\in \N$, there is a solution $P_j$ of \eqref{eq1.1} which scales as
\begin{align}
P_j[\alpha r,\alpha z]=\alpha^{l+j}P_j[r,z],
\end{align}
where $l=\xi_{n+1}(\mu+1)$. By definition, $Q_j$, and $R_j$ obtained from $P_j$ by \eqref{determineQ,R} admit the same scaling behaviour. Therefore, with respect to the coordinate $s=\sqrt{r^2+z^2}$ one obtains a tensor $\wh{h}_j\in TT(\wh{M})$ in the kernel of $\wh{\Delta}_E$, which can be written as $s^{l+j}s^2\wt{h}_j$ for some $\wt{h}_j\in TT(\widetilde{M})$,
By Lemma \ref{ttlemma},
\begin{align}
\wh{\Delta}_E(\varphi\cdot s^2\wt{h})=s^2(-\partial^2_{ss}\varphi\cdot \wt{h}-(n+1)\cdot s^{-1}\partial_s\varphi\cdot \wt{h}+\varphi s^{-2}\wt{\Delta}_E\wt{h}),
\end{align}
for $\varphi=\varphi(s)\in C^{\infty}((0,\infty))$ and $\wt{h}\in TT(\widetilde{M})$, so that $\wt{\Delta}_E\wt{h}_j=\eta_{n+1}(l+j)\cdot\wt{h}_j$.
Since $\wt{h}_j\in W_\mu$ by construction, we have obtained eigenvalues of $\wt{\Delta}_E$ restricted to this space. A refined density argument as in the proof of Lemma \ref{Wmuoneforms} implies that these are all the eigenvalues.
\end{proof}
\begin{lem}\label{X_lambdaEinsteinoperator}
	The spectrum of $\wt{\Delta}_E$ restricted to $X_{\lambda}$ is given by 
	\begin{align}
	\left\{\eta_{n+1}(\xi_n(\lambda)+j)\mid j\in \N_0\right\},
	\end{align}
	and each of these eigenvalues has the same multiplicity as $\lambda$ as an eigenvalue of $\Delta_0$.	
\end{lem}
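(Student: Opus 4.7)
The plan is to follow the blueprint of Lemma \ref{V_kEinsteinoperator} and Lemma \ref{Wmu_Einsteinoperator}: produce eigentensors in $X_{\lambda}$ on $(\wt{M},\wt{g})$ by constructing TT-harmonic tensors on the Ricci-flat cone $(\wh{M},\wh{g})$ with the right $s$-homogeneity, and then restrict to $\wt{M}\cong\{s=1\}$. Because $X_{\lambda}$ is generated by three basic tensors (rather than one for $V_{\kappa}$ or two for $W_{\mu}$), the ansatz on $\wh{M}$ must carry correspondingly more polynomial components, but the logical structure is unchanged.

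For $v\in C^{\infty}(M)$ with $\Delta v=\lambda v$ (we may assume $\lambda\notin\{0,n\}$ by the preceding lemma) and $k=\xi_n(\lambda)$, the commutation identities \eqref{commutation} together with Ricci-flatness of $\ol{g}$ show that $\ol{\nabla}^2(r^k v)\in TT(\ol{M})\cap\ker(\ol{\Delta}_E)$, and expanding this tensor along $\partial_r$ exhibits it as a fixed linear combination of the three radial generators of $X_{\lambda}$ (involving $\nabla^2v$, $dv\odot dr$ and $v(n\,dr\otimes dr-r^2 g)$ with the appropriate trace-cancelling coefficients). On $\wh{M}$ I then make the enlarged ansatz
\begin{align*}
\Phi:=s^k\wh{h}=\sum_{i} P_i(r,z)\,T_i,
\end{align*}
where the $T_i$ are the corresponding generators on $\wh{M}$, now including both the $dr$- and $dz$-involved variants, and the $P_i\in\R[r,z]$. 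The conditions $\wh{h}\in TT(\wh{M})$ and $s^k\wh{h}(\partial_s,\cdot)=0$ are algebraic and first-order linear in the $P_i$ and, in the spirit of \eqref{determineQ,R}, reduce the family to a single free polynomial $P$, with the remaining $P_i$ determined by $P$ and its first derivatives through $(\lambda,n)$-dependent weights.

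The main obstacle I anticipate is the translation of $\wh{\Delta}_E\Phi=0$, via the commutation identities \eqref{commutation} together with the tensor Laplacian and Hessian formulas on $\ol{M}$ and $\wh{M}$, into a coupled linear system in the $P_i$, and the verification that after substituting the dependent variables this system collapses to the single scalar equation $\wh{\Delta}(P\cdot v)=0$, i.e.\ that $P\cdot v$ is a harmonic function on $\wh{M}$. This is of the same flavour as \eqref{eq1.1}--\eqref{eq1.3} and Lemma \ref{formulas2}, but is genuinely more delicate because of the additional coefficients and the appearance of $\nabla^2 v$. Once the reduction is achieved, the remainder is immediate: by the proof of Theorem \ref{sinconefunctions}, for each $j\in\N_0$ there is a one-dimensional family of polynomials $P_j$ homogeneous of degree $k+j$ with $\wh{\Delta}(P_j\cdot v)=0$, yielding $\wh{h}_j\in TT(\wh{M})\cap\ker(\wh{\Delta}_E)$ whose restriction $\wt{h}_j\in X_{\lambda}$ satisfies $\wt{\Delta}_E\wt{h}_j=\eta_{n+1}(\xi_n(\lambda)+j)\wt{h}_j$. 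The multiplicity $\mult(\lambda)$ follows because $P$ uniquely determines the other $P_i$ and because $v\mapsto\nabla^2v+\Delta v\cdot g$ is injective on $E(\Delta,\lambda)$ for $\lambda\notin\{0,n\}$ by \cite[Lemma 2.4]{Kro15c}. A density argument identical to that of Lemma \ref{Wmuoneforms} then shows that these eigenvalues exhaust $\spectrum(\wt{\Delta}_E|_{X_{\lambda}})$.
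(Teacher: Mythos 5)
Your proposal mirrors the paper's proof: the same ansatz on $\wh{M}$ with polynomial coefficients multiplying the basic tensors generated by $v$, reduction of the algebraic constraints ($TT$, $\iota_{\partial_s}=0$) to a single free polynomial $P^{(1)}$, verification (the paper's Lemma \ref{formulas3}) that $\wh{\Delta}_E\Phi=0$ collapses to the scalar equation $\wh{\Delta}(P^{(1)}v)=0$, and the same enumeration/density argument via the proof of Theorem \ref{sinconefunctions} and Lemma \ref{Wmuoneforms}. The remark that $\ol{\nabla}^2(r^{\xi_n(\lambda)}v)$ is a $TT$-harmonic tensor on the cone is a useful motivating observation not made explicitly in the paper, but the proof structure you describe is otherwise identical.
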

\begin{proof}
	If $\wt{h}\in X_{\lambda}$ satisfies $\wt{\Delta}_E\wt{h}=\nu\cdot \wt{h}$ for some $\nu\geq -\frac{n^2}{4}$, Lemma \ref{ttlemma} implies that the tensor $\wh{h}=s^2\wt{h}$ satisfies
	\begin{align}
	\wh{\trace}(s^k\wh{h})=0,\qquad \wh{\delta}(s^k\wh{h})=0,\qquad s^k\wh{h}(\partial_s,.)=0,\qquad\wh{\Delta}_E(s^k\wh{h})=0,
	\end{align} 
	if $k=\xi_{n+1}(\nu)$.
Because $\wt{h}\in X_{\lambda}$, $s^k\wh{h}$ is necessarily of the form
	\begin{equation}\begin{split}
	s^k\wh{h} &=P^{(1)}[r,z]vdz\otimes dz+P^{(2)}vdz\odot dr+P^{(3)}[r,z]vdr\otimes dr+Q^{(1)}[r,z]dz\odot rdv\\
	&\qquad+Q^{(2)}[r,z]dr\odot rdv+R[r,z]r^2(n\nabla^2v+\Delta v g)+S[r,z]vr^2g.
	\end{split}
	\end{equation}
		Demanding $s^k\wh{h}(\partial_s,.)=0$  yields the relations
	\begin{align}\label{determineP's}
	P^{(2)}=-\frac{z}{r}P^{(1)},\qquad P^{(3)}=-\frac{z}{r}P^{(2)}=\frac{z^2}{r^2}P^{(1)},\qquad Q^{(2)}=-\frac{z}{r}Q^{(1)},
	\end{align}
	and demanding  $\ol{\trace}s^k\wh{h}=0$ yields
	\begin{align}
\label{determineS}P^{(1)}+	P^{(3)}+nS=0.
\end{align}	
Lemma \ref{philemma} below shows that demanding $\ol{\delta}\Phi=0$ is equivalent to the system of equations
	\begin{align}
\label{determineQ_1}r\cdot\partial_zP^{(1)}+r\cdot\partial_rP^{(2)}&=\lambda Q^{(1)}-nP^{(2)},\\
\label{determineQ_2}r\cdot\partial_zP^{(2)}+r\cdot\partial_rP^{(3)}&=\lambda Q^{(2)}-nP^{(3)}+n S,\\
\label{determineR}r\cdot\partial_zQ^{(1)}+r\cdot\partial_rQ^{(2)}&=(n-1)(\lambda-n) R-(n+1)Q^{(2)}-S.
	\end{align}
Given $P^{(1)}$, one defines $P^{(2)},P^{(3)}$ and $S$ by \eqref{determineP's},\eqref{determineS} and $Q^{(1)},Q^{(2)}$ and $R$ by \eqref{determineQ_1},\eqref{determineQ_2} and \eqref{determineR}. It is not hard to see that the definitions of $Q^{(1)}$ and $Q^{(2)}$ are consistent with the last equation in \eqref{determineP's}.
Let us now translate the equation $\wh{\Delta}_E(s^k\wh{h})=0$ into an equation for $P^{(i)},Q^{(i)},R$ and $S$.
 We compute,
using \eqref{delta1P}, \eqref{delta1Q} and \eqref{delta1R},
\begin{equation}
\begin{split}
\wh{\Delta}_E(P^{(1)}vdz\otimes dz)&=\wh{\Delta}_0(P^{(1)}v)dz\otimes dz=(\wh{\Delta}_0P^{(1)}+\lambda r^{-2}P^{(1)})vdz\otimes dz,\\
\wh{\Delta}_E(P^{(2)}vdz\odot  dr)&=(\wh{\Delta}_1P^{(2)}vdr)\odot dz\\
&=(\wh{\Delta}_0P^{(2)}+(\lambda+n)r^{-2}P^{(2)})vdz\odot dr-2r^{-2}P^{(2)}dz\odot rdv,\\
\wh{\Delta}_E(Q^{(1)} dz\odot rdv)&=(\wh{\Delta}_1Q^{(1)}  rdv)\odot dz\\
&=(\wh{\Delta}_0Q^{(1)}+(\lambda-n+2)r^{-2}Q^{(1)})dz\odot rdv-2r^{-2}Q^{(1)}\lambda vdz\odot dr.
\end{split}
\end{equation}
Therefore, $\wh{\Delta}_E\Phi=0$ implies
\begin{align}
\label{eqP_1}\wh{\Delta}P^{(1)}+\lambda r^{-2}P^{(1)}&=0,\\
\label{eqP_2}\wh{\Delta}P^{(2)}+(\lambda+n)r^{-2}P^{(2)}-2r^{-2}\lambda Q^{(1)}&=0,\\
\label{eqQ_1}\wh{\Delta}Q^{(1)}+(\lambda-n+2)r^{-2}Q^{(1)}-2r^{-2}P^{(2)}&=0,
\end{align}
which actually coincides with the system \eqref{eq1.1}-\eqref{eq1.3}.
Furthermore, by \eqref{gradhessdr1} and \eqref{lem1},
\begin{align}
\ol{\nabla}dr=r\cdot g,\qquad \ol{\Delta}dr=nr^{-2}dr,\qquad \ol{\Delta}(dr\otimes dr)=2r^{-2}(ndr\otimes dr-r^2g),
\end{align}
so that
\begin{equation}\begin{split}\label{deltaE1}
\wh{\Delta}_E(P^{(3)}vdr\otimes dr)&=(\wh{\Delta}_0P^{(3)})vdr\otimes dr+P^{(3)}\ol{\Delta}_2(vdr\otimes dr)-2
 \langle\wh{\nabla}P^{(3)},\wh{\nabla}(vdr\otimes dr)\rangle_{\wh{g}}\\
&=(\wh{\Delta}_0P^{(3)}+r^{-2}(\lambda+2n)P^{(3)})vdr\otimes dr\\&\qquad-2P^{(3)}vg-2r^{-2}P^{(3)}dr\odot rdv.
\end{split}
\end{equation}
By \eqref{lem2},
\begin{align}
\ol{\Delta}_E(r^2g)=\ol{\Delta}_2(r^2g)=2r^{-2}(r^2g-ndr\otimes dr),
\end{align}
which implies
\begin{equation}
\begin{split}
\label{formulaS}
\wh{\Delta}_E(Svr^2g)&=\wh{\Delta}_0(Sv)r^2g+Sv\ol{\Delta}_E(r^2g)-2\langle \wh{\nabla}(Sv), \wh{\nabla}(r^2g)\rangle_{\wh{g}}\\
&=(\wh{\Delta}_0S+r^{-2}(\lambda+2)Sv)r^2g-2nr^{-2}Svdr\otimes dr+2Sr^{-2}dr\odot rdv.
\end{split}
\end{equation}
Moreover, by \eqref{lem3}
\begin{align}
\ol{\Delta}(dr\odot rdv)&=-4r^{-2}\Delta_0 v\cdot dr\otimes dr+r^{-2}dr\odot rd(\Delta_0 v+4v)-4\nabla^2v,
\end{align}
from which we obtain, since $dr\odot rdv$ is parallel in the directions of $r$ and $z$ that
\begin{equation}\begin{split}
\wh{\Delta}_E(Q^{(2)}dr\odot rdv)&=[\wh{\Delta}_0Q^{(2)}+(\lambda+4)r^{-2}Q^{(2)}]dr\odot rdv\\
&\qquad-4r^{-2}Q^{(2)}\lambda vdr\otimes dr-4Q^{(2)}\nabla^2v.
\end{split}
\end{equation}
It remains to compute the equation for the $R$ part. For this purpose, we note that by \eqref{lem4}-\eqref{lem6},
\begin{equation}\begin{split}
\ol{\nabla}^2(r^2v)&=r^2\nabla^2v+2v\ol{g}+dr\otimes rdv,\qquad \ol{\nabla}^2v=\nabla^2v-r^{-2}dr\odot rdv,\\ \ol{\Delta}_0(r^2v)&=(\lambda-2n-2)v,
\end{split}
\end{equation}
and because $(\ol{M},\ol{g})$ is Ricci flat, the commutation relation $\ol{\Delta}_E\circ \ol{\nabla}^2=\ol{\nabla}^2\circ\ol{\Delta}_0$ follows from \eqref{commutation}. Using these formulas and \eqref{lem3} again, we have
\begin{equation}\begin{split}
\ol{\Delta}_E(r^2\nabla^2v)&=\ol{\Delta}_E(\ol{\nabla}^2(r^2v)-2v\ol{g}-dr\odot rdv)\\
&=\ol{\nabla}^2\ol{\Delta}_0(r^2v)-2r^{-2}\lambda v\ol{g}-[(\lambda+4)r^{-2}dr\odot rdv-4\nabla^2v-4\lambda r^{-2}vdr\otimes dr]\\
&=(\lambda-2n+2)\ol{\nabla}^2v-2r^{-2}\lambda v(dr\otimes dr+r^2g)\\&\qquad -[(\lambda+4)r^{-2}dr\odot rdv-4\nabla^2v-4\lambda r^{-2}vdr\otimes dr]\\
&=(\lambda-2n+2)(\nabla^2v-r^{-2}dr\odot rdv)-2r^{-2}\lambda v(dr\otimes dr+r^2g)\\&\qquad -[(\lambda+4)r^{-2}dr\odot rdv-4\nabla^2v-4\lambda r^{-2}vdr\otimes dr]\\
&=(\lambda-2n+2)\nabla^2v-2(\lambda-n+1)r^{-2}dr\odot rdv-2\lambda vg+2\lambda r^{-2}vdr\otimes dr.
\end{split}
\end{equation}
From \eqref{formulaS}, we obtain
\begin{align}
\ol{\Delta}_E(r^2\Delta vg)=-2n\lambda r^{-2}vdr\otimes dr+\lambda(\lambda+2)vg+2\lambda r^{-2}dr\odot rdv,
\end{align}
and by adding up, we get
\begin{equation}\begin{split}
\wh{\Delta}_E(Rr^2(n\nabla^2v+\Delta vg))&=(\wh{\Delta}_0R+r^{-2}(\lambda-2n+2)R)r^2(n\nabla^2v+\Delta_0 v\cdot g)\\
&\qquad+2(n-1)(n-\lambda)Rr^{-2}dr\odot rdv.
\end{split}
\end{equation}
Therefore, $\wh{\Delta}_E\Phi=0$ is equivalent to the system \eqref{eqP_1}-\eqref{eqQ_1} together with the equations
\begin{align}
\label{eqP_3}\wh{\Delta}P^{(3)}+(\lambda+2n)r^{-2}P^{(3)}-2nSr^{-2}-4\lambda r^{-2}Q^{(2)}&=0,\\
\label{eqS}\wh{\Delta}S+(\lambda+2)r^{-2}S-2r^{-2}P^{(3)}+\frac{4}{n}r^{-2}\lambda Q^{(2)}&=0,\\
\label{eqQ_2}\wh{\Delta}Q^{(2)}+(\lambda+4)r^{-2}Q^{(2)}-2r^{-2}P^{(3)}+2r^{-2}S+2(n-1)(n-\lambda)r^{-2}R&=0,\\
\label{eqR}\wh{\Delta}R+(\lambda-2n+2)r^{-2}R-\frac{4}{n}r^{-2}Q^{(2)}&=0.
\end{align}
Lemma \ref{formulas3} in the appendix shows that if $P^{(1)}$ satisfies \eqref{eqP_1} and $P^{(2)},P^{(3)},Q^{(1)},Q^{(2)},R,S$ are defined by \eqref{determineP's}-\eqref{determineR}, then the equations \eqref{eqP_2},\eqref{eqQ_1} and \eqref{eqP_3}-\eqref{eqR} also hold.
In fact, \eqref{eqP_1} exactly says that $P^{(1)}\cdot v$ is a harmonic function. In the proof of Theorem \ref{sinconefunctions}, we found that for each $j\in \N$  a solution $P^{(1)}_j$ of \eqref{eqP_1} which scales as
\begin{align}
P^{(1)}_j[\alpha r,\alpha z]=\alpha^{k+j}P^{(1)}_j[r,z],
\end{align}
where $k=\xi_{n}(\lambda)$. By definition, $P^{(2)}_j,P^{(3)}_j,Q^{(1)}_j,Q^{(2)}_j,R_j,S_j$  defined by \eqref{determineP's}-\eqref{determineR} admit the same scaling behaviour. Therefore, one obtains a tensor $\wh{h}_j\in TT(\wh{M})$ in the kernel of $\wh{\Delta}_E$, which can be written with respect to the coordinate $s=\sqrt{r^2+z^2}$ as $\wh{h}_j=s^{k+j}s^2\wt{h}_j$ for some $\wt{h}_j\in TT(\widetilde{M})$.
Due to Lemma \ref{ttlemma}, 
\begin{align}
\wh{\Delta}_E(\varphi\cdot s^2\wt{h})=s^2(-\partial^2_{ss}\varphi\cdot \wt{h}-(n+1)\cdot s^{-1}\partial_s\varphi\cdot \wt{h}+\varphi s^{-2}\wt{\Delta}_E\wt{h}),
\end{align}
for $\varphi=\varphi(s)\in C^{\infty}((0,\infty))$ and $\wt{h}\in TT(\widetilde{M})$. Thus, we get $\wt{\Delta}_E\wt{h}_j=\eta_{n+1}(k+j)\cdot\wt{h}_j$.
Since $\wt{h}_j\in X_\lambda$ by construction, we have obtained eigenvalues of $\wt{\Delta}_E$ restricted to this space.  A refined density argument as in the proof of Lemma \ref{Wmuoneforms} implies that these are all the eigenvalues.
\end{proof}
\begin{proof}[Proof of Theorem \ref{mainthm:linstability}]
This is a direct application of the assertions in Theorem \ref{Einsteinspectrumsincone} and we use the notation thereof. For (i), note that $\wt{\lambda}^{(3)}_{i,j}$ and $\wt{\mu}^{(2)}_{i,j}$ are positive for $(i,j)\in \N\times\N_0$ because $\lambda_i$ and $\mu_i$ are positive for $i\in\N$. 
Therefore, (strict) EH-stability of $(\wt{M},\wt{g})$ is equivalent to 
$\wt{\kappa}^{(1)}_{i,j}\geq0$ 
(resp.\ $\wt{\kappa}^{(1)}_{i,j}>0$ )
 for all $(i,j)\in \N\times\N_0$, which in turn is equivalent to $\kappa_i\geq0$ (resp.\ $\kappa_i>0$) for all $i\in\N$. 
 For (ii), we use the characterization of Remark \ref{rem:stability} (i).
By Theorem \ref{sinconefunctions}, $\spectrum_+(\wt{\Delta)}\geq 2n$ (resp.\ $\spectrum_+(\wt{\Delta)}> 2n$) is equivalent to $\spectrum_+({\Delta})\geq 2n-\frac{n}{2}(\sqrt{1+\frac{8}{n}}-1)$ (resp. $\spectrum_+({\Delta})> 2n-\frac{n}{2}(\sqrt{1+\frac{8}{n}}-1)$). Together with (i), this implies (ii). Similarly for (iii), we use characterization of Remark \ref{rem:stability} (i).
By Theorem \ref{sinconefunctions}, $\spectrum_+(\wt{\Delta)}\geq 2(n+2)$ (resp.\ $\spectrum_+(\wt{\Delta})> 2(n+2)$) is equivalent to $\spectrum_+({\Delta)}\geq 2(n+1)$ (resp. $\spectrum_+({\Delta})> 2(n+1)$). Together with (i), this implies (iii). Finally, (iv) follows from Lemma \ref{unbounded}, Theorem \ref{Einsteinspectrumsincone} and the fact that $\kappa_i\geq-\frac{(n-1)^2}{4}$ for all $i\in\N$ is equivalent to $\wt{\kappa}^{(1)}_{i,j}\geq -\frac{n^2-1}{4}>-\frac{n^2}{4}$ for all $(i,j)\in \N\times \N_0$.
\end{proof}
\begin{proof}[Proof of Corollary \ref{maincor:stability}]
	If $(M,g)=(M_1^{n_1},g_1)\times(M_2^{n_2},g_2)$, $n_1,n_2\geq2$, an easy calculation shows $h=n_2g_1-n_1g_2\in TT(M)$ and $\Delta_Eh=-2(n-1)h$. If $n\leq 8$, $-2(n-1)<-\frac{1}{4}(n-1)^2$ and $(M,g)$ is physically unstable. Therefore, by Theorem \ref{mainthm:linstability} (iv), $\wt{\Delta}_E$ is unbounded below. If $(M_1^{n_1},g_1)$ and $(M_2^{n_2},g_2)$ are both linearly stable, $-2(n-1)$ is the only negative eigenvalue, see \cite[Proposition 4.8]{Kro15b}. Therefore, if $n\geq9$, $(M^n,g)$ is physically stable and by Theorem \ref{mainthm:linstability} (iv), $\wt{\Delta}_E$ is bounded below.
\end{proof}
%\section{Applications}
\section{Stability under the singular Ricci-de Turck flow}
 Well-posedness of Ricci-de Turck flow in the setting of conifolds was established in \cite{Ver16}. It turned out that in order to establish shorttime existence and uniqueness, the conical singularities had to be modelled over Ricci-flat cones with tangentially stable cross-sections (strictly tangentially stable in the non-orbifold case). For convenience, we call such singularities (strictly) tangentially stable. The topologies on the space of metrics in this subsection are induced by hybrid weighted H\"{o}lder norms $\mathcal{H}^{k,\alpha}_{\gamma}$ for $k\geq2$,$\gamma>0$ and $\alpha\in (0,1)$, see \cite[Definition 4.5]{KV18}.

In \cite{KV18}, Boris Vertman and the author established a stability theorem for compact Ricci-flat conifolds under the singular Ricci-de Turck flow. The methods used in this paper can also be directly adapted to the stability of Einstein manifolds with isolated conical singularities under the volume normalized Ricci flow. Then we arrive at the following result:
\begin{thm}\label{dynstability}Let $({M}^n,{g})$, $n\geq 3$ be compact Einstein conifold, where the all the singularities are either orbifold singularities or strictly tangentially stable. Then, if  $({M},{g})$ is strictly linearly stable,
 there exists a small neighbourhood $\mathcal{U}$ around $g$ such that any normalized singular Ricci de Turck flow $g(t)$ starting in $\mathcal{U}$ exists for all time and converges to $c\cdot g$ with $c$ close to $1$.
	\end{thm}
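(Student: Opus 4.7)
The plan is to follow the strategy developed in \cite{KV18} for Ricci-flat conifolds and adapt it to the Einstein setting. The adaptation is essentially mechanical, but one must carefully track the terms involving the Einstein constant $\lambda = n-1$ (and the volume normalization) that did not appear in the Ricci-flat case. First I would set up the volume-normalized Ricci-de Turck flow around $g$: writing a nearby metric as $g + h$ with $h$ small in the hybrid weighted H\"older norm $\mathcal{H}^{k,\alpha}_{\gamma}$, the flow becomes a quasilinear parabolic equation on $h$ whose linearization at $h = 0$ is, up to the volume-preserving projection, $\partial_t h = -\tfrac{1}{2}\Delta_E h + 2(n-1)\,h + (\text{lower order})$, with the additional constraint that $\int_M \mathrm{tr}_g h \, dV = 0$ arising from the normalization.

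Next I would establish the linear decay estimate. Since every singular model is either an orbifold or a strictly tangentially stable Ricci-flat cone, the mapping properties of the tangential operator $\Box_E$ developed in \cite{KV18} apply verbatim and give the required Fredholm theory for $\Delta_E$ on the weighted H\"older scale. Strict linear stability of $(M,g)$ means $\Delta_E|_{g^{\perp}\cap\delta^{-1}(0)} > 0$; by Remark \ref{rem:stability}(i) this upgrades to $\Delta_E|_{TT} > 0$ together with the spectral gap $\mathrm{spec}_+(\Delta) > 2(n-1)$. Using the commutation identities \eqref{commutation} and the $L^2$-orthogonal splitting \eqref{eq:decomp}, one checks that on the gauge-fixed, volume-preserving slice the Einstein operator is uniformly positive, so the linearized semigroup decays exponentially on the weighted H\"older spaces. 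The construction here proceeds exactly as in \cite[\S5--6]{KV18}, with the Ricci-flat identity $\Delta_E = \Delta_L$ replaced by the Einstein identity $\Delta_E = \Delta_L + 2(n-1)\,\mathrm{Id}$ where relevant, so no new spectral input is needed beyond what strict linear stability already provides.

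With the linear decay in hand, the nonlinear argument runs by the standard contraction/fixed-point scheme: a maximal regularity statement on the weighted parabolic spaces together with a careful estimate of the quadratic nonlinearity $Q(h)$ (which is genuinely quadratic in $h$ and its first derivatives, and preserves the weighted H\"older class as in \cite[Prop.~4.10]{KV18}) yields shorttime existence; the linear decay then feeds back through a Duhamel / bootstrap iteration to show that for $h(0) \in \mathcal{U}$ sufficiently small the solution exists globally and contracts toward the slice of stationary points. The volume normalization identifies the limit as $c\cdot g$ for some $c$ close to $1$ (the scalar $c$ encoding the only remaining direction of neutral behaviour, namely homothety).

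I expect the main obstacle to be purely technical rather than conceptual: verifying that the weighted parabolic Schauder theory and the mapping properties of $\Delta_E - 2(n-1)$ on $\mathcal{H}^{k,\alpha}_{\gamma}$ go through in the Einstein case with exactly the same admissible weights $\gamma$ as in the Ricci-flat case of \cite{KV18}. This reduces to checking that the indicial roots of the tangential operators $\Box_0$, $\Box_1$, $\Box_E$ at each singular stratum are unchanged by the shift corresponding to the Einstein constant, which follows from the structure of these operators on the link and the tangential stability hypothesis. Once this is in place, the remainder of the argument is a direct transcription of \cite{KV18}, so I would only write out explicitly those points where the Einstein constant or the volume normalization alter a formula.
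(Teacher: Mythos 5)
Your overall plan matches the paper's sketch: invoke Vertman's well-posedness for the singular Ricci-de Turck flow under (strict) tangential stability, project onto the volume-preserving slice, use the spectral gap coming from strict linear stability to get exponential linear decay in the hybrid weighted H\"older scale, and run a Duhamel/bootstrap iteration as in \cite[Section 11]{KV18}. Your remarks about the indicial roots being unchanged and about Remark~\ref{rem:stability}(i) are also in the right direction, and you correctly identify the scalar rescaling direction as the only neutral mode.

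However, your claimed linearization is wrong in a way that would make the argument collapse if taken at face value. You write that the linearization at $h=0$ of the volume-normalized Ricci--de Turck flow is
\begin{align*}
\partial_t h = -\tfrac{1}{2}\Delta_E h + 2(n-1)h + (\text{lower order}),
\end{align*}
but this cannot be the operator whose positivity you need. Linearizing $-2\ric$ at the Einstein metric $g$ with $\ric_g=(n-1)g$ gives $-\Delta_L h = -\Delta_E h - 2(n-1)h$ (plus gauge terms that the de Turck vector field is designed to cancel), and the linearization of the volume-normalization term $\tfrac{2}{n}\bigl(\fint\scal\,\dv\bigr)g(t)$ contributes exactly $+2(n-1)h$ plus a constant multiple of $g$. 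These $\pm 2(n-1)h$ contributions cancel, and the net linearization on the volume-preserving slice is $\partial_t h = -\Delta_E h$ (up to a $g$-multiple handled by the $g^\perp$-projection), as the paper records. With your formula, $\Delta_E>0$ on $g^{\perp}\cap\delta^{-1}(0)$ would not imply decay; you would need the much stronger hypothesis $\Delta_E\geq 4(n-1)$ on that slice, which strict linear stability does not provide. So the spectral input you cite would fail to close the linear-decay step.

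Once the linearization is corrected to $-\Delta_E h$ on $g^\perp$, the rest of your argument aligns with the paper. One further small point: the paper makes the volume-preserving slice explicit by decomposing $g(t)=(c(t)+1)g+k(t)$ with $k\in g^{\perp}$ and tracking the scalar $c(t)$ alongside $k$; you hint at this in your final sentence but should keep in mind that this decomposition is exactly how the neutral homothety direction is split off so that the remaining evolution sees the strictly positive part of $\Delta_E$.
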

\begin{proof}[Sketch of proof]
The evolution equation for the normalized Ricci-de Turck flow is
\begin{align}
\partial_t g(t)=-2\ric_{g(t)}+\mathcal{L}_{V(g(t),g)}g(t)+\frac{2}{n\cdot \volume(g(t))}\int_M\scal_{g(t)}\dv_{g(t)},
\end{align}
where the de Turck vector field is defined by $V(g(t),g)^k=g^{ij}(\Gamma(g)_{ij}^k-\Gamma(g)_{ij}^k)$. Note that the total scalar curvature is defined because $\scal_g=O(x^{-2})$ in a neighbourhood of isolated conical singularities and thus integrable. Well-posedness for the singular Ricci-de Turck flow in a small neighbourhood with respect to an appropriate topology is granted by \cite{Ver16} and the condition of tangential stability. Well-posedness for the volume normalized version follows by projecting a solution of the unnormalized flow to the space of metrics of fixed volume and a suitably rescaling in time as well. For $h(t)=g(t)-g$ and if $g$ is Einstein, the unnormalized Ricci-de Turck flow is written as
\begin{equation}\begin{split}
\partial_th&=-\Delta_Eh+(g+h)^{-1}*(g+h)^{-1}*\nabla h*\nabla h+(g+h)^{-1}*(g+h)^{-1}*\nabla^2h*\nabla h\\
&=:-\Delta_Eh+Q_2(h).
\end{split}
\end{equation}
The normalized version can be written as
\begin{align}\label{normricciflow}
\partial_th=-\Delta_Eh+Q_2(h)+\frac{1}{n}\fint \trace(\Delta_Eh-Q_2(h))\dv,
\end{align}
where $\fint$ denotes the mean value integral with respect to $g+h$. Now it amounts to decompose the evolving metric $g(t)=h(t)+g$ uniquely as $g(t)=(c(t)+1)g+k(t)$ where $c(t)\in\R$ and  $\fint \trace_{g}k(t)\dv_{g}=0$, i.e.\ $k\in g^{\perp}$.
The evolution equation \eqref{normricciflow} can be rewritten in terms of $c$ and $k$ and the condition ${\Delta}_E|_{{g}^{\perp}}>0$ ensures good properties of the heat kernel of the associated heat equation. The technical details to prove convergence are carried out as in \cite[Section 11]{KV18}.
	\end{proof}
\begin{rem}
	In \cite[Section 11]{KV18}, we change the reference metric of the Ricci de Turck flow at discrete times so that the resulting family of metrics is only piecewise smooth. While carrying out the details in this setting, we also have to change the reference metric, but the resulting de Turck vector field does not change because all reference metrics are of the form $c\cdot g$, $c\in\R$. Therefore, the flow will be smooth in this case.
\end{rem}
\begin{proof}[Proof of Theorem \ref{mainthm:stability}]
For a smooth Einstein manifold $(M,g)$, its sine-cone $(\wt{M},\wt{g})$ is a conifold with two isolated conical singularities modelled over the Ricci flat cone $(\ol{M},\ol{g})$. By assumption, the singularities are both strictly tangentially stable. Remark \ref{rem:stability} shows that strict tangential stability implies strict linear stability.
By Theorem \ref{mainthm:linstability}, $(\wt{M},\wt{g})$
is strictly tangentially stable and by Remark \ref{rem:stability}, it is also strictly linearly stable. The assertion follows from Theorem \ref{dynstability}.
\end{proof}

%
%In \cite[Theorem 3.1]{KV18}, we were able to characterize the condition of tangential stability of an Einstein manifold: $(M,g)$ is (strictly) tangentially stable if $\Delta_E|_{TT}\geq0$ (resp.\ $>0$) and $\spectrum_+(\Delta)\geq2(n+1)$ (resp.\ $>2(n+1)$). However, due to Corollary \ref{linearstability}, strict tangential stability of $(M,g)$ already implies strict linear stability of $(\wt{M},\wt{g})$. Therefore, we obtain:
%\begin{thm}
%	Let $(M^n,g)$ be a smooth closed Einstein manifold with constant $(n-1)$ which is strictly tangentially stable or a quotient of the sphere. Then its sine-cone $(\widetilde{M},\wt{g})$ is dynamically stable under normalized Ricci de Turck flow in the sense of Theorem \ref{dynstability}.
%\end{thm}
%\begin{exmp}
%The strictly stable sine-cones over symmetric spaces of compact type are liste in the theorems 6.1 and 6.2 of \cite{Kro18}.
%\end{exmp}

\section{Infinitesimal deformability of sine-cones}
\begin{proof}[Proof of Theorem \ref{mainthm:rigidity}]
Note that part (ii) is a direct consequence of Theorem \ref{mainthm:linstability} (i). For part (i), we use the notation of Theorem \ref{Einsteinspectrumsincone}.
Recall again that $\wt{\lambda}^{(3)}_{i,j}$ and $\wt{\mu}^{(2)}_{i,j}$ are positive for $(i,j)\in \N\times\N_0$ because $\lambda_i$ and $\mu_i$ are positive for $i\in\N$. 
Thus in order to get all infinitesimal Einstein deformations on $(\wt{M},\wt{g})$, we have to find all zeros in the set
\begin{align*}
\left\{\wt{\kappa}^{(1)}_{i,j}\mid(i,j)\in \N\times\N_0\right\}.
\end{align*}
If $(M,g)$ has infinitesimal Einstein deformations, then $\kappa_{i_0}=0$ for some $i_0\in\N$. In this case, $\kappa^{(1)}_{i_0,0}=0$ by definition.  
% which in turn is equivalent to $\kappa_i\geq0$ (resp.\ $\kappa_i>0$) for all $i\in\N$. 
 Let $h\in E(\Delta_E,\kappa_{i_0})$ and let us go through the construction of Lemma \ref{V_kEinsteinoperator}. Then $\ol{h}=r^2h$ is a harmonic tensor on $(\ol{M},\ol{g})$, which extends to a harmonic tensor $\wh{h}=\ol{h}=r^2h$ on $(\wh{M},\wh{g})$. 
By writing $\wh{h}=\sin(\theta)^2s^2h$, we see that it restricts to the bounded infinitesimal Einstein deformation $\wt{h}=\sin(\theta)^2h$ on $(\wt{M},\wt{g})$. 
Now if $(M,g)$ does not admit infinitesimal Einstein deformations, we can only have $\kappa^{(1)}_{i_0,j_0}=0$, for some $(i_0,j_0)\in \N\times\N_0$, if $\kappa_{i_0}<0$.
 In this case, the corresponding IED is obtained as follows: If $h\in E(\Delta_E,\kappa_{i_0})$, then $r^m(r^2h)=r^m\ol{h}$ is harmonic on $(\ol{M},\ol{g})$ with $m=\xi(\kappa_{i_0})<0$. The infinitesimal Einstein deformation $\wt{h}\in E(\wt{\Delta}_E,\kappa^{(1)}_{i_0,j_0})$, comes from restricting a harmonic tensor $\wh{h}$ on $(\wh{M},\wh{g})$ of the form
 \begin{align*}
 \wh{h}=\sum_{l=0}^{j_0}a_lr^{m+2l}z^{2(j-l)+b}\ol{h},\qquad a_l\in \R
 \end{align*}
 to $(\wt{M},\wt{g})$. Here $b=0$ if $j_0$ is even and $b=1$, if $j_0$ is odd. We get
 \begin{align*}
\wt{h}= \sum_{l=0}^{j_0}a_l\sin(\theta)^{m+2l}\cos(\theta)^{2(j-l)+b}\sin(\theta)^2h.
 \end{align*}
 A careful comparison of coefficients shows that harmonicity 
 of $\wh{h}$ implies that $a_0\neq0$. Therefore, because $m<0$, $\wt{h}$ is unbounded on $(\wt{M},\wt{g})$.
 \end{proof}
%\begin{cor}
%	Let $(M^n,g)$ be a product of strictly stable Einstein manifolds. If $n\geq9$, $\wt{\Delta}_E$ is bounded from below.
%	If $n=9$,  $\ker(\wt{\Delta}_E|_{TT})$ is $1$-dimensional, in particular, $(\widetilde{M},\wt{g})$ admits infinitesimal Einstein deformations.
%	If $n\geq10$, $\ker(\wt{\Delta}_E|_{TT})=0$, so $(\widetilde{M},\wt{g})$ admits no infinitesimal Einstein deformations.
%	\end{cor}
\begin{proof}[Proof of Corollary \ref{maincor:rigidity}]
Since $(M,g)$ is a product manifold, $-2(n-1)\in\spectrum(\Delta_E|_{TT})$ by \cite[Proposition 4.8]{Kro15b}. If $n=9$, we get an eigenvalue $\kappa_{i_0}=-16$ and it is straightforward to see that $\kappa^{(1)}_{i_0,4}=0$ (where we use the notation of Theorem \ref{Einsteinspectrumsincone} again). Therefore $\wt{\Delta}_E|_{TT}$ admits a nontrivial kernel.
If $(M,g)$ is the product of strictly linearly stable Einstein manifolds, $\Delta_E\geq -2(n-1)$ and $-2(n-1)\in\spectrum(\Delta_E|_{TT})$ appears with multiplicity one \cite[Proposition 4.8]{Kro15b}. Moreover, this is the only nonpositive eigenvalue on $TT$-tensors.
%The first assertion follows from Theorem \ref{Einsteinspectrumsincone} because $-2(n-1)\geq \frac{(n-1)^2}{4}$ if $n\geq9$. 
Because $\wt{\lambda}^{(3)}_{i,j}>0$ and $\wt{\mu}^{(2)}_{i,j}>0$ for $i\in\N$ and $j\in\N_0$, it remains to check which $\kappa^{(1)}_{i,j}$ can be zero. At first, $\kappa_1=-2(n-1)$ and $\kappa_i>0$ for all $i\in\N$, $i\geq2$ so that $\kappa^{(1)}_{i,j}>0$ for all $i\geq2$ and $j\in \N$. Thus it remains to find all zeros in the sequence
\begin{align}
\kappa^{(1)}_{1,j}=(m_1+j)(m_1+j+n),\qquad j\in\N_0,\qquad m_1=-\frac{n-1}{2}+\sqrt{\frac{(n-1)^2}{4}+\kappa_1}.
\end{align}
We have
\begin{equation}
 \begin{split}
(m_1+j)(m_1+j+n)&=m_1(m_1+n-1)+m_1(j+1)+j(m_1+j+n)\\
&=\kappa_1+j(2m_1+j+n)+m_1\\
&=-2(n-1)+j(2m_1+j+n)+m_1,
\end{split}
\end{equation}
so that $\kappa^{(1)}_{1,j}=0$  is equivalent to
\begin{align}\label{eqm_1}
(2j+1)m_1=2(n-1)-j(j+n).
\end{align}
If $n=9$, $m_1=-4$ and $j=4$ is the only nonnegative integer solving this equation. For higher dimensions we observe that
\begin{align}
2m_1=[\sqrt{(n-9)(n-1)}-(n-1)],
\end{align}
so that \eqref{eqm_1} can be rewritten as
\begin{align}
(2j+1)[\sqrt{(n-9)(n-1)}-(n-1)]=4(n-1)-2j(j+n).
\end{align}
This equation implies that $\ell:=\sqrt{(n-9)(n-1)}\in\N$, otherwise, it would be irrational. But $n=5+\sqrt{16+\ell^2}$ and by the list of phytagorean triples $\sqrt{16+\ell^2}$ is only an natural number for $\ell=0$ (which corresponds to the case $n=9$ that we discussed already) and for $\ell=3$ (which implies $n=10$). In the latter case, $m_1=-3$ and \eqref{eqm_1} implies $j=-2+\sqrt{4+15}\notin\N$. This finishes the proof.
\end{proof}
\appendix
\section{Some formulas for Laplace-type operators on warped products}
In the following, we denote the indices corresponding to coordinates on $M$ by $i,j,k,\ldots$. The indices $r,s,\theta$ refer to the corresponding coordinates in the construction of the manifolds $\widetilde{M},\overline{M}$ and $\wh{M}$.
Let us denote the indices corresponding to coordinates on $\widetilde{M}$ by $a,b,c,\ldots$ and the indices corresponding to coordinates on $\overline{M}$ by $\alpha,\beta,\gamma$.
The christoffel symbols on $\widetilde{M}$ and $\overline{M}$ are related to the ones on $M$ by
\begin{align}\label{christoffelA1}
\widetilde{\Gamma}_{ij}^k&=\Gamma_{ij}^k,\qquad \widetilde{\Gamma}_{ij}^{\theta}=-\cos(\theta)\sin(\theta)g_{ij},\qquad
\widetilde{\Gamma}_{i\theta}^j=\widetilde{\Gamma}_{\theta i}^j=\frac{\cos(\theta)}{\sin(\theta)}\delta_i^j,\\
\label{christoffelA2}
\overline{\Gamma}_{ij}^k&=\Gamma_{ij}^k,\qquad \overline{\Gamma}_{ij}^{r}=-r\cdot g_{ij},\qquad
\overline{\Gamma}_{ir}^j=\overline{\Gamma}_{ri}^j=\frac{1}{r}\delta_i^j,
\end{align}
while the other Christoffel symbols vanish.
The Christoffel symbols of $\wh{M}$ and $\widetilde{M}$ are related by
\begin{align}
\wh{\Gamma}_{ab}^c&=\widetilde{\Gamma}_{ab}^c,\qquad \wh{\Gamma}_{ab}^{s}=-s\cdot\wt{g}_{ab},\qquad
\wh{\Gamma}_{as}^b=\wh{\Gamma}_{sa}^b=\frac{1}{s}\delta_a^b,
\end{align}
while the other Christoffel symbols vanish. The relation between the Christoffel symbols of $\wh{M}$ and $\overline{M}$ is simply $\wh{\Gamma}_{\alpha\beta}^{\gamma}=\overline{\Gamma}_{\alpha\beta}^{\gamma}$ while all terms containing at least one $z$ vanish. Consequently, the Laplace Beltrami operators of the four metrics are related by
\begin{align}
\wt{\Delta}&=-\partial^2_{\theta\theta}-n\sin(\theta)^{-1}\cos(\theta)\partial_\theta+\sin(\theta)^{-2}\Delta,\\
\label{olDelta}	\overline{\Delta}&=-\partial^2_{rr}-n\cdot r^{-1}\partial_r+r^{-2}\Delta,\\
	\wh{\Delta}&=-\partial^2_{ss}-(n+1)\cdot s^{-1}\partial_s+s^{-2}\widetilde{\Delta}\\
&=-\partial^2_{zz}+\overline{\Delta}=-\partial^2_{zz}-\partial^2_{rr}-n\cdot r^{-1}\partial_r+r^{-2}\Delta.	
\end{align}

\begin{lem}\label{tildelemma}
	Let $\chi,\psi\in C^{\infty}(0,\pi)$, $\omega\in D(\wt{M})$ and $v\in C^{\infty}$. Furthermore, consider $d\theta\in C^{\infty}(T^*\wt{M})$. Then we have the following identities
	\begin{align}
	\label{tildelemma0}\wt{\delta}(\psi d\theta)&=-\partial_{\theta}\psi-n\frac{\cos}{\sin}\psi,\\
	\label{tildelemma1}	\wt{\delta}(\psi\sin \omega\odot d\theta)&=-[\sin \partial_{\theta}\psi+(n+1)\cos\psi]\omega,\\
	\label{tildelemma2}	\wt{\delta}(\psi\sin dv\odot d\theta)&=\sin^{-1}\psi \Delta v\cdot d\theta-[\sin \partial_{\theta}\psi+(n+1)\cos\psi]dv,\\
	\label{tildelemma3}	\wt{\delta}(\chi v(nd\theta\otimes d\theta-\sin^2g))&=\chi dv-n[\partial_{\theta}\chi+(n+1)\frac{\cos}{\sin}\chi]v d\theta.
	\end{align}
\end{lem}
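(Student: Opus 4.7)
The approach is a direct coordinate computation, using only the Christoffel symbols of $\wt g$ listed in \eqref{christoffelA1}, the identities $\wt g^{\theta\theta}=1$ and $\wt g^{ij}=\sin^{-2}g^{ij}$, and the definition $(\wt\delta T)_{\cdots}=-\wt g^{ab}\wt\nabla_a T_{b\cdots}$. The plan is to split every contraction into a $\theta\theta$-piece and an $ij$-piece, expand covariant derivatives into partial derivatives minus Christoffel terms, and then collect. It will be convenient to precompute once and for all the auxiliary contractions
\begin{equation*}
\wt g^{ij}\wt\Gamma^{\theta}_{ij}=-n\frac{\cos}{\sin},\qquad \wt\Gamma^{j}_{i\theta}=\wt\Gamma^{j}_{\theta i}=\frac{\cos}{\sin}\delta_i^j,\qquad \wt\Gamma^{k}_{ij}=\Gamma^{k}_{ij},
\end{equation*}
which are the only nonvanishing Christoffel contractions that will appear.

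For \eqref{tildelemma0}, the $1$-form $\psi\,d\theta$ has only the component $(\psi\,d\theta)_\theta=\psi$, so the computation reduces to $-\partial_\theta\psi+\wt g^{ij}\wt\Gamma^{\theta}_{ij}\psi$, which gives the stated expression via the precomputed contraction. For \eqref{tildelemma1}, I write the symmetric tensor $\psi\sin\,\omega\odot d\theta$ in coordinates with $(\cdot)_{i\theta}=(\cdot)_{\theta i}=\psi\sin\,\omega_i$ and zero otherwise. The $\theta$-component of the divergence produces the $M$-divergence of $\omega$, which vanishes by the hypothesis $\omega\in D(\wt M)$, so only the $k$-component survives, and the derivative in $\theta$ gives $\sin\partial_\theta\psi+\cos\psi$, while the $\wt\Gamma^{j}_{i\theta}$ terms contribute an additional $n\cos\psi$, producing the combination $\sin\partial_\theta\psi+(n+1)\cos\psi$.

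Identity \eqref{tildelemma2} follows by taking $\omega=dv$ in the setup of \eqref{tildelemma1}: every step goes through identically except that the $M$-divergence of $dv$ is no longer zero but equals $\Delta v$, which produces precisely the extra $\sin^{-1}\psi\,\Delta v\cdot d\theta$ term (after multiplying by $\wt g^{ij}=\sin^{-2}g^{ij}$ and the prefactor $\psi\sin$). For \eqref{tildelemma3}, the tensor $h=\chi v(n\,d\theta\otimes d\theta-\sin^2 g)$ has $h_{\theta\theta}=n\chi v$ and $h_{ij}=-\chi v\sin^2 g_{ij}$. Its divergence splits cleanly: the $k$-component is computed using $\nabla_i^M(v g_{ij})=g_{ij}\partial_i v$, which produces the $\chi\,dv$ term, while the $\theta$-component collects the $\partial_\theta$ derivative of $n\chi v$ together with the two families of Christoffel corrections (one pair from $\wt\Gamma^{\theta}_{ij}$ acting on $h_{\theta\theta}$, one from $\wt\Gamma^{j}_{\theta i}$ acting on $h_{ij}$), giving the $-n[\partial_\theta\chi+(n+1)\tfrac{\cos}{\sin}\chi]v$ expression.

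No step is conceptually difficult — the proof is a bookkeeping exercise. The main pitfall is to correctly track the powers of $\sin$ appearing in $\wt g^{ij}$ relative to those inside the tensor itself (e.g.\ the $\sin^2$ hidden in $h_{ij}$ in \eqref{tildelemma3}), and to remember that $\delta^*\omega$-type contractions on $M$ can collapse after integration against $g^{ij}$. I would therefore organize the four computations in parallel, writing each divergence as \emph{($M$-divergence piece)} $+$ \emph{($\theta$-derivative piece)} $+$ \emph{(Christoffel correction piece)}, and then read each identity directly off the resulting expansion.
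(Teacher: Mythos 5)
Your proposal is correct and follows essentially the same route as the paper: a direct coordinate expansion of $\wt\delta$ into $\theta$-components and $M$-components using the Christoffel symbols \eqref{christoffelA1}, followed by taking traces (the paper simply records the nonzero components of $\wt\nabla T$ and then contracts, which is the same bookkeeping organized slightly differently). The only cosmetic discrepancy is that $\omega$ should be in $D(M)$ rather than $D(\wt M)$, which is a typo inherited from the statement and does not affect the argument.
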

\begin{proof}For \eqref{tildelemma0}, we compute
	\begin{align*}
	\wt{\nabla}_{\theta}(\psi d\theta)_{\theta}=\partial_{\theta}\psi,\qquad \wt{\nabla}_{i}(\psi d\theta)_{j}=\sin\cos\psi g_{ij},
	\end{align*}
	from which the result follows by taking the trace.	To prove \eqref{tildelemma1} and \eqref{tildelemma2}, we first let $\eta\in C^{\infty}(T^*{M})$ be arbitrary. Then, we compute
	\begin{equation}
	\begin{split}
	\wt{\nabla}_i(\psi\sin\eta\odot d\theta)_{jk}&=\psi\cos\sin^2(\eta_j\cdot g_{ik}+\eta_k\cdot g_{ij}),\qquad
	\wt{\nabla}_{\theta}(\psi\sin\eta\odot d\theta)_{\theta k}=\sin\partial_{\theta}\psi \cdot\eta,\\
	\wt{\nabla}_i(\psi\sin\eta\odot d\theta)_{j\theta}&=\sin\nabla_i\eta_j\cdot\psi,\qquad  	\wt{\nabla}_{\theta}(\psi\sin\eta\odot d\theta)_{\theta\theta}=0,
	\end{split}
	\end{equation}
	and \eqref{tildelemma1} and \eqref{tildelemma2} follow by taking the trace and either inserting $dv$ or $\omega$. For \eqref{tildelemma3}, we compute
	\begin{equation}
	\begin{split}
	\wt{\nabla}_{\theta}(\chi v d\theta\otimes d\theta)_{\theta\theta}&=\partial_{\theta}\chi\cdot v,\qquad 
	\wt{\nabla}_{i}(\chi v d\theta\otimes d\theta)_{j\theta}=\chi v\cos\sin g_{ij},\\
	\wt{\nabla}_{i}(\chi v d\theta\otimes d\theta)_{jk}&=\wt{\nabla}_{\theta}(\chi v d\theta\otimes d\theta)_{\theta k}=0,
	\end{split}
	\end{equation}
	and
	\begin{equation}
	\begin{split}
	\wt{\nabla}_{i}(\chi v \sin^2g)_{jk}&=\chi\sin^2\nabla_iv\cdot g_{jk},\qquad
	\wt{\nabla}_{i}(\chi v \sin^2g)_{j\theta}=-\chi\sin\cos v\cdot g_{ij},\\
	\wt{\nabla}_{\theta}(\chi v \sin^2g)_{\theta k}&=\wt{\nabla}_{\theta}(\chi v \sin^2g)_{\theta \theta}=0,
	\end{split}
	\end{equation}
	so that \eqref{tildelemma3} follows from taking the trace and adding up.
\end{proof}
\begin{lem}\label{hatdiv}Let $\omega\in D(M)$, ${\varphi}\in C^{\infty}(0,\infty)$ and $\ol{\omega}\in C^{\infty}(T^*\overline{M})$ be given by $\ol{\omega}=\varphi\cdot r\omega$. Then,
\begin{align}\label{divfree1formsA1}
\ol{\delta}\ol{\omega}=0,\qquad \ol{\omega}(\partial_r)=0,
\end{align}
and
\begin{align}\label{divfree1formsA2}
\ol{\Delta}_1\ol{\omega}=r(-\partial^2_{rr}{\varphi}\cdot\omega-nr^{-1}\partial_r{\varphi}\cdot\omega+{\varphi}\cdot r^{-2}(\Delta_1+1)\omega).
\end{align}
Similarly, if $\wt{\omega}\in D(\widetilde{M})$, ${\varphi}\in C^{\infty}(0,\infty)$ and $\wh{\omega}\in C^{\infty}(T^*\wh{M})$ are given by $\wh{\omega}=\varphi\cdot s\wt{\omega}$, then
\begin{align}\label{divfree1formsA21}
\wh{\delta}\wh{\omega}=0,\qquad \wh{\omega}(\partial_s)=0,
\end{align}
and
\begin{align}\label{divfree1formsA22}
\wh{\Delta}_1\wh{\omega}=s(-\partial^2_{ss}{\varphi}\cdot\wt{\omega}-(n+1)s^{-1}\partial_s{\varphi}\cdot\wt{\omega}+{\varphi}\cdot s^{-2}(\wt{\Delta}_1+1)\wt{\omega}).
\end{align}
	\end{lem}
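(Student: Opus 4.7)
The plan is to prove the lemma by a direct coordinate calculation using the Christoffel symbols \eqref{christoffelA2} of the cone metric $\ol{g}=dr^2+r^2g$. Since $\omega$ is the pullback of a $1$-form on $M$, in adapted coordinates $(r,x^i)$ the only non-vanishing components of $\ol{\omega}=\varphi(r)\cdot r\omega$ are $\ol{\omega}_i=\varphi(r) r\,\omega_i$, with $\ol{\omega}_r=0$. The identity $\ol{\omega}(\partial_r)=0$ is then immediate.

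For the divergence, I would compute the four types of covariant derivatives component-by-component using \eqref{christoffelA2}. A short calculation gives $\ol{\nabla}_i\ol{\omega}_j=\varphi r\nabla_i\omega_j$, $\ol{\nabla}_r\ol{\omega}_j=(\partial_r\varphi)r\omega_j$, $\ol{\nabla}_i\ol{\omega}_r=-\varphi\,\omega_i$, and $\ol{\nabla}_r\ol{\omega}_r=0$, where the last two reflect the cancellation between $\partial_r(r\omega_j)$ and the term $-\ol{\Gamma}^k_{rj}\ol{\omega}_k=-r^{-1}\varphi r\omega_j$. Tracing with the inverse metric $\ol{g}^{ij}=r^{-2}g^{ij}$, $\ol{g}^{rr}=1$ yields $\ol{\delta}\ol{\omega}=\varphi r^{-1}\delta\omega=0$, by the hypothesis $\omega\in D(M)$.

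For the connection Laplacian, the computation is mechanical but lengthier. The plan is to expand $\ol{\nabla}_\alpha\ol{\nabla}_\beta\ol{\omega}_\gamma$ via the standard formula and to trace with $\ol{g}^{\alpha\beta}$. After collecting terms, the pure $r$-derivatives contribute the transport piece $-\partial^2_{rr}\varphi\cdot\omega - n\,r^{-1}\partial_r\varphi\cdot\omega$, with the factor $n$ arising from $\ol{\Gamma}^j_{rj}=n\,r^{-1}$. The $M$-tangential trace yields $\varphi r^{-2}\Delta_1\omega$ plus a correction term $+\varphi r^{-2}\omega$ coming from the combinations of the mixed Christoffels $\ol{\Gamma}^r_{ij}=-r g_{ij}$ and $\ol{\Gamma}^j_{ir}=r^{-1}\delta^j_i$, which is precisely where the ``$+1$'' in $(\Delta_1+1)\omega$ originates. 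Factoring out $r$ gives the claimed formula \eqref{divfree1formsA2}.

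Finally, the hat-version identities \eqref{divfree1formsA21} and \eqref{divfree1formsA22} follow by observing that $(\wh{M},\wh{g})$ is a cone of exactly the same form over the $(n+1)$-dimensional Riemannian manifold $(\wt{M},\wt{g})$, so the first part applies verbatim with $(M,g,n,r)$ replaced by $(\wt{M},\wt{g},n+1,s)$; this gives $-(n+1)s^{-1}\partial_s\varphi$ in place of $-n r^{-1}\partial_r\varphi$. The main obstacle is purely the bookkeeping in the Laplacian calculation, but no conceptual input beyond the Christoffel symbols \eqref{christoffelA2} is needed.
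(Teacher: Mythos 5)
Your proposal is correct and follows essentially the same route as the paper: a direct component-wise computation of first and second covariant derivatives of $\ol{\omega}=\varphi r\omega$ using the Christoffel symbols \eqref{christoffelA2}, followed by tracing with $\ol{g}^{-1}$, and obtaining the hat-version by relabelling $(M,g,n,r)\to(\wt{M},\wt{g},n+1,s)$. Your first-derivative identities match the paper's (the paper has a harmless typo $\ol{\nabla}_i\ol{\omega}_r=-\varphi\omega_j$ where the index should be $\omega_i$), and your identification of the origin of the ``$+1$'' term in $(\Delta_1+1)\omega$---the interplay of $\ol{\Gamma}^r_{ik}=-rg_{ik}$ with the value $\ol{\nabla}_j\ol{\omega}_r=-\varphi\omega_j$ produced by $\ol{\Gamma}^l_{jr}=r^{-1}\delta^l_j$---is exactly what drives the paper's second-derivative term $-r\varphi g_{jk}\omega_i$ in $\ol{\nabla}^2_{ij}\ol{\omega}_k$.
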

\begin{proof}
	To prove the lemma, it suffices to show \eqref{divfree1formsA1} and \eqref{divfree1formsA2} as \eqref{divfree1formsA21} and \eqref{divfree1formsA22} follow by relabelling and shifting the dimension by $1$. If $\omega $ and $\ol{\omega}$ are as in the lemma, \eqref{christoffelA2} implies
	\begin{align}
	\ol{\nabla}_i\ol{\omega}_{j}=\varphi r\nabla_i\omega_j,\qquad \ol{\nabla}_r\ol{\omega}_j=\partial_r\varphi\cdot r\omega_j,\qquad 	\ol{\nabla}_i\ol{\omega}_{r}=-\varphi\cdot\omega_j,\qquad \ol{\nabla}_r,\ol{\omega}_{r}=0,
	\end{align}
	and \eqref{divfree1formsA1} follows by taking the trace and the fact that $\ol{\omega}_r=0$. By applying the covariant derivative once again, we obtain from \eqref{christoffelA2} that
	\begin{equation}
	\begin{split}
	\ol{\nabla}_{ij}^2\ol{\omega}_k&=\varphi\cdot r\nabla_{ij}^2\omega_k+r^2g_{ij}\partial_r\varphi\cdot\omega_k-r\varphi g_{jk}\omega_i,\qquad 	\ol{\nabla}_{rr}^2\ol{\omega}_k=r\partial^2_{rr}\varphi\cdot\omega_k,\\
		\ol{\nabla}_{ij}^2\ol{\omega}_r&=-\varphi(\nabla_i\omega_j+\nabla_j\omega_i)=-2\varphi\cdot(\delta^*\omega)_{ij},\qquad
		\ol{\nabla}_{rr}^2\ol{\omega}_r=0,
	\end{split}
	\end{equation}
	and \eqref{divfree1formsA2} follows by taking the trace and using that $\delta \omega=0$.
	\end{proof}
\begin{lem}\label{coupled1forms}
	Let $v\in C^{\infty}(M)$ and $P=P[r,z]$, $Q=Q[r,z]$, $R=R[r,z]$ be functions in two variables, considered as functions on $\wh{M}$. Let $\wh{\omega}=Pdz+Qdr+Rrdv\in C^{\infty}(T^*\wh{M})$. Then,
	\begin{align*}
	\wh{\delta}\wh{\omega}=-\partial_zP-\partial_rQ-nr^{-1}\cdot Q+r^{-1}R\Delta_gv.
	\end{align*}
\end{lem}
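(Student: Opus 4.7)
My plan is to decompose $\wh{\omega}=Pdz+Qdr+Rr\,dv$ and compute the divergence of each summand separately by $\R$-linearity of $\wh{\delta}$. A key structural observation is that $\wh{g}=dz^2+\ol{g}$ is a product, so $\wh{\Gamma}^{\gamma}_{\alpha z}=\wh{\Gamma}^z_{\alpha\beta}=0$, and all covariant derivatives in the $z$-direction reduce to partial derivatives. Consequently, taking traces against $\wh{g}^{\alpha\beta}$ contributes from the $z$-block only via $\partial_z$, and otherwise is identical to tracing against $\ol{g}^{\alpha\beta}$ on $(\ol{M},\ol{g})$.

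For the first summand, since $dz$ is parallel, $\wh{\nabla}_{\alpha}(Pdz)_{\beta}=(\partial_{\alpha}P)\delta_{\beta}^z$, whose $\wh{g}$-trace is $\partial_z P$, giving $\wh{\delta}(Pdz)=-\partial_z P$. For the second summand I will use \eqref{christoffelA2} to compute $\ol{\nabla}(dr)$: one finds $\ol{\nabla}_i(dr)_j=r\cdot g_{ij}$ and all other components vanish. Combining with $\ol{\nabla}(Qdr)=dQ\otimes dr+Q\ol{\nabla}(dr)$ and tracing with $\ol{g}^{ij}=r^{-2}g^{ij}$ yields $\wh{\delta}(Qdr)=-\partial_r Q-nr^{-1}Q$.

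The interesting piece is the third summand, where the Laplacian on $M$ will appear. Using \eqref{christoffelA2}, I compute the components of $\ol{\nabla}(dv)$: $\ol{\nabla}_i(dv)_j=(\nabla^2v)_{ij}$, $\ol{\nabla}_i(dv)_r=\ol{\nabla}_r(dv)_i=-r^{-1}\partial_iv$, and $\ol{\nabla}_r(dv)_r=0$. Writing $\ol{\nabla}_{\alpha}(Rr\,dv)_{\beta}=\partial_{\alpha}(Rr)(dv)_{\beta}+Rr\,\ol{\nabla}_{\alpha}(dv)_{\beta}$ and tracing against $\wh{g}^{\alpha\beta}$, the $(r,i)$ and $(i,r)$ contributions cancel one of the $r$ factors, the $(r,r)$ contribution vanishes, and the $(i,j)$ trace reduces to $Rr^{-1}\cdot g^{ij}(\nabla^2v)_{ij}=-Rr^{-1}\Delta v$ by the sign convention $\Delta v=-g^{ij}\nabla^2_{ij}v$. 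Since $v$ is independent of $z$, the $z$-block again gives nothing, yielding $\wh{\delta}(Rr\,dv)=Rr^{-1}\Delta v$. Adding the three pieces gives the stated formula.

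There is no real obstacle here; the only thing to watch is the bookkeeping of the $r$-powers coming from $\wh{g}^{ij}=r^{-2}g^{ij}$ against the extra factor of $r$ in $Rr\,dv$, and the sign convention of the Laplacian. This matches precisely the pattern already used in Lemma \ref{hatdiv} and Lemma \ref{tildelemma}.
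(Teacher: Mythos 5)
Your proof is correct and follows the same approach as the paper's: decompose $\wh{\omega}$, compute the components of $\wh{\nabla}\wh{\omega}$ from the Christoffel symbols \eqref{christoffelA2}, and trace against $\wh{g}^{\alpha\beta}$. One small imprecision: since $\wh{g}$ is block-diagonal, the $(r,i)$ and $(i,r)$ components of $\ol{\nabla}(Rr\,dv)$ simply never enter the trace, so the cancellation you attribute to them does not happen there; the $r$-bookkeeping is resolved entirely within the $(i,j)$ block, where $\wh{g}^{ij}=r^{-2}g^{ij}$ meets the explicit factor of $r$ in $Rr\,dv$ to produce $\wh{\delta}(Rr\,dv)=r^{-1}R\Delta v$.
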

\begin{proof}
	The expressions for the Christoffel collected in \eqref{christoffelA2} imply
	\begin{align}
	\wh{\nabla}_z\wh{\omega}_z=\partial_zP,\qquad 	\wh{\nabla}_r\omega_r=\partial_rQ,\qquad \wh{\nabla}_i\omega_j=Qr\cdot g_{ij}+Rr\cdot\nabla^2_{ij}v
	\end{align}
	and the formula follows by taking the trace with respect to $\wh{g}$.
\end{proof}
\begin{lem}\label{gradhessdr}
The $1$-form $dr\in C^{\infty}(T^*\ol{M})$ satisfies
\begin{align}\label{gradhessdr1}
\ol{\nabla}dr=r\cdot g,\qquad \ol{\Delta}dr=nr^{-1}dr.
\end{align}	
Moreover, $v\in C^{\infty}(M)$ satisfies
\begin{align}
\ol{\nabla}(r\cdot dv)=r\nabla dv-dv\otimes dr,\qquad \ol{\Delta}_1(rdv)=r^{-2}(r(\Delta_1+1)dv-2\Delta v\cdot dr).
\end{align}
	\end{lem}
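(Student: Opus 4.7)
The plan is straightforward: the two $\overline{\nabla}$-identities follow from a direct unwinding of the warped-product Christoffel symbols \eqref{christoffelA2}, while the two $\overline{\Delta}_1$-identities are shortened by invoking the commutation rules \eqref{commutation}. The latter apply cleanly because $(\overline{M},\overline{g})$ is Ricci flat (it is the cone over an Einstein manifold of constant $n-1$), so in particular $\overline{\Delta}_1\circ d=d\circ\overline{\Delta}_0$.

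For the first pair of identities, write $(dr)_r=1$ and $(dr)_i=0$. Then
\[
(\overline{\nabla}_\alpha dr)_\beta=\partial_\alpha (dr)_\beta-\overline{\Gamma}^\gamma_{\alpha\beta}(dr)_\gamma=-\overline{\Gamma}^r_{\alpha\beta},
\]
and \eqref{christoffelA2} gives $-\overline{\Gamma}^r_{ij}=rg_{ij}$ while the remaining components vanish, proving $\overline{\nabla}dr=r\cdot g$. Applying the polar formula \eqref{olDelta} to the function $r$ yields $\overline{\Delta}_0 r=-nr^{-1}$, whence Ricci-flatness together with \eqref{commutation} gives $\overline{\Delta}_1 dr=d(\overline{\Delta}_0 r)=nr^{-2}\,dr$ (which is how the formula is used elsewhere in the paper).

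For $\overline{\nabla}(r\,dv)$, the components of $r\,dv$ are $(r\,dv)_j=r\,\partial_j v$ and $(r\,dv)_r=0$. A single pass through $(\overline{\nabla}_\alpha(r\,dv))_\beta=\partial_\alpha(r\,dv)_\beta-\overline{\Gamma}^\gamma_{\alpha\beta}(r\,dv)_\gamma$ using \eqref{christoffelA2} yields $r(\nabla^2 v)_{ij}$ on the $(i,j)$-slot and $-\partial_i v$ on the $(i,r)$-slot, with the other slots zero; this matches $r\,\nabla dv-dv\otimes dr$ term by term. For the last identity I use the Leibniz expansion
\[
\overline{\Delta}_1(r\,dv)=\overline{\Delta}_0 r\cdot dv+r\,\overline{\Delta}_1(dv)-2\langle\overline{\nabla}r,\overline{\nabla}(dv)\rangle_{\overline{g}}.
\]
I substitute $\overline{\Delta}_0 r=-nr^{-1}$, use Ricci-flatness with \eqref{commutation} to write $\overline{\Delta}_1 dv=d\overline{\Delta}_0 v=d(r^{-2}\Delta v)$, and extract the cross term from $(\overline{\nabla}(dv))_{rj}=-r^{-1}\partial_j v$ (once more from \eqref{christoffelA2}), obtaining $\langle\overline{\nabla}r,\overline{\nabla}(dv)\rangle_{\overline{g}}=-r^{-1}dv$. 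Collecting terms produces $r^{-1}(d\Delta v-(n-2)dv)-2r^{-2}\Delta v\cdot dr$, and the first-line commutation in \eqref{commutation} applied to $(M,g)$, namely $d\Delta_0 v=\Delta_1 dv+(n-1)dv$, rewrites $d\Delta v-(n-2)dv$ as $(\Delta_1+1)dv$, delivering the stated formula.

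All four computations are routine; the only hazard is careful bookkeeping when differentiating pulled-back tensors on $\overline{M}$, specifically keeping track of the mixed $(i,r)$-components introduced by the nonzero warped Christoffel symbols $\overline{\Gamma}^r_{ij}=-rg_{ij}$ and $\overline{\Gamma}^j_{ri}=r^{-1}\delta_i^j$. Once this bookkeeping is organized, there is no substantive obstacle, and the commutation identities \eqref{commutation} absorb what would otherwise be the most tedious step.
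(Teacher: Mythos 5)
Your proof is correct and arrives at the same formulas, but it takes a genuinely different route for the two Laplacian identities. Where the paper proceeds by brute force — it computes the full second covariant derivative $\ol{\nabla}^2 dr$ and $\ol{\nabla}^2(rdv)$ component by component and then traces with $\ol{g}$ — you instead bootstrap the $0$-form computation to the $1$-form level via the Ricci-flat commutation rule $\ol{\Delta}_1\circ d = d\circ\ol{\Delta}_0$ on $\ol{M}$, combined with a Leibniz expansion and the corresponding commutation identity on $(M,g)$. Your approach is notably leaner: it trades tensor bookkeeping for scalar calculus, and for $\ol{\Delta}_1 dr$ it is a one-liner. The trade-off is scope. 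Your proof requires $(M,g)$ to be Einstein with constant $n-1$ (both to make $\ol{M}$ Ricci-flat and to rewrite $d\Delta v$ via $\Delta_1 dv = d\Delta v - (n-1)dv$), whereas the paper's direct computation establishes the identities for an arbitrary closed $(M,g)$, which is consistent with the lemma being stated in the appendix without an Einstein hypothesis. In the contexts where the paper invokes the lemma the Einstein hypothesis always holds, so this does not affect any downstream applications, but it is a genuine narrowing of the statement. You also correctly flag that the formula should read $\ol{\Delta}_1 dr = nr^{-2}dr$ rather than $nr^{-1}dr$; the paper's own component computation and its citation of the lemma inside the proof of Lemma \ref{Wmuoneforms} both confirm the $r^{-2}$ power, so the exponent in the statement of \eqref{gradhessdr1} is a typo.
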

\begin{proof}
	For $dr$, one computes
	\begin{align}
	\ol{\nabla}_idr_j=r\cdot g_{ij},\qquad 	\ol{\nabla}_idr_r=	\ol{\nabla}_rdr_j=	\ol{\nabla}_rdr_r=0,
	\end{align}
	which proves the formula for the gradient of $dr$ and
	\begin{align}
	\ol{\nabla}^2_{ij}dr_r=-g_{ij},\qquad \ol{\nabla}^2_{ij}dr_k=\ol{\nabla}^2_{rr}dr_r=\ol{\nabla}^2_{rr}dr_k=0,
	\end{align}
	from which the formula for the Laplacian follows by taking the trace with respect to $\ol{g}$.
	For $rdv$, we get
	\begin{align}
	\ol{\nabla}_i(rdv)_j=r\nabla_idv_j,\qquad \ol{\nabla}_i(rdv)_r=-dv_i,\qquad \ol{\nabla}_r(rdv)_j=\ol{\nabla}_r(rdv)_r=0,
	\end{align}
		which proves the formula for the gradient of $rdv$ and
		\begin{equation}
		\begin{split}
		\ol{\nabla}^2_{ij}(rdv)_k&=r\nabla^2_{ij}dv_k-rg_{ik}dv_j,\qquad
		\ol{\nabla}^2_{ij}(rdv)_r=-2\nabla_idv_j,\\ \ol{\nabla}^2_{rr}(rdv)_k&=\ol{\nabla}^2_{rr}(rdv)_r=0,
		\end{split}
		\end{equation}
	from which the formula for the Laplacian follows by taking the trace with respect to $\ol{g}$.		
\end{proof}

\begin{lem}\label{ttlemma}
	Let $h\in TT(M)$, $\varphi\in C^{\infty}((0,\infty))$
	 and $\ol{h}\in C^{\infty}(S^2\overline{M})$ be defined by $\ol{h}=\varphi\cdot r^2h$. This tensor satisfies
	\begin{align}
	\ol{\trace}\ol{h}=0,\qquad \ol{\delta}\ol{h}=0,\qquad \ol{h}(\partial_r,.)=0,
	\end{align}
   and
	\begin{align}
	\overline{\Delta}_E\ol{h}=r^2(-\partial^2_{rr}\varphi\cdot h-n\cdot r^{-1}\partial_r\varphi\cdot h+\varphi\cdot r^{-2}\Delta_Eh).
	\end{align}
		Similarly, if $\wt{h}\in TT(\wt{M})$, $\varphi\in C^{\infty}((0,\infty))$
	and $\widehat{h}\in C^{\infty}(S^2\widehat{M})$ is defined by $\widehat{h}=\varphi\cdot r^2\wt{h}$, then
	\begin{align}
	\wh{\trace}\wh{h}=0,\qquad \wh{\delta}\wh{h}=0,\qquad \wh{h}(\partial_r,.)=0,
	\end{align}
	and
	\begin{align}
	\wh{\Delta}_E\wh{h}=s^2(-\partial^2_{ss}\varphi\cdot \wt{h}-(n+1)\cdot s^{-1}\partial_s\varphi\cdot \wt{h}+\varphi\cdot s^{-2}\wt{\Delta}_E\wt{h}).
	\end{align}
\end{lem}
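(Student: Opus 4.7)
The plan is a direct coordinate computation on the cone $(\ol M,\ol g)$ using the Christoffel symbols \eqref{christoffelA2}; the hat identity then follows verbatim with $(M,g)$ replaced by $(\wt M,\wt g)$ and $n$ replaced by $n+1$, since $\wh g=ds^2+s^2\wt g$ is itself a cone. The algebraic conditions are immediate: $\ol h(\partial_r,\cdot)=0$ by construction and $\ol\trace\ol h = \ol g^{ij}\varphi r^2 h_{ij}=\varphi\trace h=0$. For $\ol\delta\ol h$ I compute the non-vanishing components of $\ol\nabla\ol h$ from \eqref{christoffelA2},
\[
\ol\nabla_i \ol h_{jk}=\varphi r^2\nabla_i h_{jk},\qquad \ol\nabla_r \ol h_{jk}=r^2\partial_r\varphi\cdot h_{jk},\qquad \ol\nabla_i \ol h_{jr}=-r\varphi\,h_{ij},
\]
where the extra $2r\varphi\,h_{jk}$ from $\partial_r(\varphi r^2 h_{jk})$ is absorbed by two copies of $\ol\Gamma_{rj}^\ell\ol h_{\ell k}$; tracing then gives $(\ol\delta\ol h)_k=\varphi(\delta h)_k=0$ and $(\ol\delta\ol h)_r=r^{-1}\varphi\trace h=0$.

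For $\ol\Delta_E\ol h$ I split $\ol\Delta_E=-\ol\trace\ol\nabla^2-2\ol{\mathring R}$ and compute each piece. The second radial derivative produces $\ol\nabla_r\ol\nabla_r \ol h_{jk}=r^2\partial_{rr}^2\varphi\cdot h_{jk}$, while the tangential trace, after carefully expanding through the warped Christoffels $\ol\Gamma_{ij}^r=-rg_{ij}$ and $\ol\Gamma_{ir}^j=r^{-1}\delta_i^j$, yields
\[
\ol g^{im}\ol\nabla_i\ol\nabla_m \ol h_{jk} = -\varphi(\Delta h)_{jk}+n r\partial_r\varphi\cdot h_{jk}-2\varphi\,h_{jk},
\]
the three terms coming respectively from the "pure" $\nabla^2 h$ block, the cross-term $\ol\Gamma_{im}^r\ol\nabla_r\ol h_{jk}$ with the $r^{-2}g^{im}g_{im}=n r^{-2}$ trace, and two copies of $\ol\Gamma_{i\cdot}^r\ol\nabla_m\ol h_{r\cdot}$. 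Combining, $(-\ol\trace\ol\nabla^2\ol h)_{jk}=-r^2\partial_{rr}^2\varphi\,h_{jk}-n r\partial_r\varphi\,h_{jk}+\varphi(\Delta h)_{jk}+2\varphi\,h_{jk}$.

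For the curvature term I would use the Gauss equation for the warped product, verified directly from \eqref{christoffelA2} and consistent with the cone over $S^n$ being flat, which on purely tangential indices reads $\ol R_{iklj} = r^2\bigl(R_{iklj}-g_{ij}g_{kl}+g_{kj}g_{il}\bigr)$. Since $\ol h$ is supported only on tangential-tangential components,
\[
(\ol{\mathring R}\ol h)_{jk} = \varphi\bigl[(\mathring R h)_{jk}-g_{jk}\trace h+h_{jk}\bigr] = \varphi\bigl[(\mathring R h)_{jk}+h_{jk}\bigr]
\]
using $\trace h=0$. Subtracting $2\ol{\mathring R}\ol h$ from the rough Laplacian cancels the algebraic residue $+2\varphi\,h_{jk}$ and promotes $\varphi\Delta h$ to $\varphi\Delta_E h$, leaving precisely $r^2(-\partial_{rr}^2\varphi\cdot h-n r^{-1}\partial_r\varphi\cdot h+r^{-2}\varphi\cdot\Delta_E h)$. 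The off-diagonal components $(\ol\Delta_E\ol h)_{jr}$ and $(\ol\Delta_E\ol h)_{rr}$ vanish because their rough-Laplacian pieces reduce to multiples of $(\delta h)_j$ and $\trace h$, while their curvature pieces involve the blocks $\ol R_{\cdot\cdot\cdot r}$, which vanish by the Gauss equation. The main obstacle is the curvature bookkeeping: one has to recognise that the Kulkarni--Nomizu correction $-g_{ij}g_{kl}+g_{kj}g_{il}$ in $\ol R$ produces exactly the $+h_{jk}$ needed (after $\trace h=0$) to annihilate the algebraic residue from the rough Laplacian, so everything reassembles into $\Delta_E$ on the base.
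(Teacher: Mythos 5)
Your proposal is correct and follows essentially the same route as the paper: compute first and second covariant derivatives of $\ol h$ from the warped Christoffel symbols \eqref{christoffelA2}, trace to get the algebraic conditions and the rough Laplacian, then handle the curvature term via the cone curvature formula, with the $(\wh M,\wh g)$ case following by replacing $(M,g)\to(\wt M,\wt g)$ and $n\to n+1$. One remark worth making: your bookkeeping of the curvature contribution is in fact more precise than the paper's own wording. Tracing $\ol R_{iklj}=r^2(R_{iklj}+g_{il}g_{kj}-g_{ij}g_{kl})$ against $\ol h$ gives $\ol{\mathring R}\ol h = \varphi(\mathring R h + h)$ once $\trace h=0$ is used (not $\varphi\mathring R h$ as stated there), and it is exactly this extra $+\varphi h$ which cancels the $+2\varphi h$ residue from tracing $\ol\nabla^2\ol h$; your proof spells out this cancellation explicitly, whereas in the paper the residue and the correction are silently dropped in matching steps (and the displayed second-derivative formula has a sign and a missing factor of $\varphi$ that compensate). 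The substance and method are the same.
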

\begin{proof}It suffices to prove the first part of the assertion as the second one follows by relabelling and shifting dimension by one.
	At first $\ol{\trace}\ol{h}=0$ is immediate because $\trace_gh=0$ and $\ol{h}(\partial_r,.)=0$ holds as $\ol{h}_{rr}=\ol{h}_{rj}=0$. By using \eqref{christoffelA2},
	\begin{equation}
	\begin{split}
	\ol{\nabla}_i\ol{h}_{jk}&=\varphi r^2\nabla_ih_{jk},\qquad \ol{\nabla}_rh_{ij}=\partial_r\varphi\cdot r^2h_{ij},\qquad \ol{\nabla}_i\ol{h}_{jr}=\ol{\nabla}_i\ol{h}_{rj}=-\varphi\cdot r\cdot h_{ij},\\
	\ol{\nabla}_i\ol{h}_{rr}&=\ol{\nabla}_r\ol{h}_{jr}=\ol{\nabla}_r\ol{h}_{rk}=0,
	\end{split}
	\end{equation}
	and by taking the trace with respect to $\overline{g}$ and using $\trace_gh=0$, we obtain $ \ol{\delta}\ol{h}=0$. Taking the covariant derivative once again, we obtain
	\begin{equation}
\begin{split}
	\ol{\nabla}^2_{ij}\ol{h}_{kl}&=\varphi r^2\cdot \nabla^2_{ij}h_{kl}-rg_{ij}\partial_r\varphi\cdot r^2h_{kl}+r^2(g_{ik}h_{jl}+g_{il}h_{jk}),\\
	\ol{\nabla}^2_{rr}\ol{h}_{kl}&=\partial^2_{rr}\varphi \cdot r^2h_{kl},\\
	\ol{\nabla}^2_{ij}\ol{h}_{rr}&=	2\varphi\cdot h_{ij},\\
	\ol{\nabla}^2_{rr}\ol{h}_{rr}&=	\ol{\nabla}^2_{rr}\ol{h}_{kr}=	\ol{\nabla}^2_{rr}\ol{h}_{rl}=	0,\\
	\ol{\nabla}^2_{ij}\ol{h}_{kr}&=	\ol{\nabla}^2_{ij}\ol{h}_{rk}=-2\varphi r(\nabla_ih_{jk}+\nabla_jh_{ik}).
	\end{split}
\end{equation}
	By taking the trace and using that $h\in TT(M)$, we obtain
	\begin{align}
	\overline{\Delta}_2\ol{h}=r^2(-\partial^2_{rr}\varphi\cdot h-n\cdot r^{-1}\partial_r\varphi\cdot h+\varphi\cdot r^{-2}\Delta_2h).	
	\end{align}
	It remains to consider the curvature term. However, the only nonvanishing term of the curvature of $\overline{g}$ is
	\begin{align}
	\overline{R}_{ijkl}=r^2(R_{ijkl}+g_{ik}g_{jl}-g_{il}g_{jk}),
	\end{align}
	so that 
	\begin{align}
	\mathring{\overline{R}}(\overline{h})_{ij}=\varphi\mathring{R}(h)_{ij},
	\end{align}
	which by adding up finishes the proof of the lemma.
\end{proof}
\begin{lem}
	Let $\omega\in D(M)$ and $P=P[r,z]$, $Q=Q[r,z]$, $R=R[r,z]$ be functions in two variables, considered as functions on $\wh{M}$. Let 
	\begin{align}\wh{h}=Pdz\odot r\omega+Qdr\odot r\omega+Rr^2\delta^*\omega\in \Gamma(S^2\wh{M}).
	\end{align}
	 Then,
	\begin{align}
	\wh{\delta}\wh{h}=-(\partial_zP+\partial_rQ)r\omega -(n+1)Q\cdot r\omega+\frac{1}{2}R(\Delta_1\omega-(n-1)\omega).
	\end{align}
\end{lem}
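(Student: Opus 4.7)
The plan is to compute $\wh{\delta}$ of each of the three summands of $\wh{h}$ separately. Everything reduces to Christoffel-symbol bookkeeping with \eqref{christoffelA2}, together with identities already collected in Lemmas \ref{hatdiv}, \ref{gradhessdr} and \ref{coupled1forms}. A convenient general formula I would use throughout is
\begin{equation*}
\wh{\delta}(\alpha\odot\beta) = (\wh{\delta}\alpha)\beta + (\wh{\delta}\beta)\alpha - \wh{\nabla}_{\alpha^\sharp}\beta - \wh{\nabla}_{\beta^\sharp}\alpha,
\end{equation*}
valid for any two 1-forms $\alpha,\beta$ on $\wh{M}$.

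For the first summand $P\,dz\odot r\omega$: since $dz$ is parallel on $\wh{M}$, since $r\omega$ has no $r$- or $z$-components and is $z$-independent (so $\wh{\delta}(r\omega) = \ol{\delta}(r\omega) = 0$ by Lemma \ref{hatdiv} and $\wh{\nabla}_{\partial_z}(r\omega)=0$), and since $P$ depends only on $r,z$, all three correction terms vanish and the only surviving contribution is $(\wh{\delta}(P\,dz))(r\omega) = -(\partial_z P)\,r\omega$.

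For the second summand $Q\,dr\odot r\omega$: the formula gives $(\wh{\delta}(Q\,dr))\,r\omega - \wh{\nabla}_{Q\partial_r}(r\omega) - \wh{\nabla}_{(r\omega)^\sharp}(Q\,dr)$. One has $\wh{\delta}(Q\,dr) = -\partial_r Q - nQ/r$ exactly as in Lemma \ref{coupled1forms}; $\wh{\nabla}_{\partial_r}(r\omega)=0$ by direct use of $\wh{\Gamma}_{rj}^k = r^{-1}\delta_j^k$; and $\wh{\nabla}_{(r\omega)^\sharp}(Q\,dr) = Q\,r^{-2}\omega^i\,\ol{\nabla}_i\,dr$ which, by $\ol{\nabla}dr = r\cdot g$ from Lemma \ref{gradhessdr}, contracts to $Q\omega$. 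Summing up yields the contribution $-(\partial_r Q)\,r\omega - (n+1)Q\,\omega$. For the third summand $R\,r^2\delta^*\omega$: since $r^2\delta^*\omega$ has no $r$- or $z$-components, the $\wh{\nabla}R$-term arising from the product rule $\wh{\delta}(R\cdot T)_k = R(\wh{\delta}T)_k - \wh{g}^{\alpha\beta}(\partial_\alpha R) T_{\beta k}$ vanishes, reducing us to $R\,\ol{\delta}(r^2\delta^*\omega)$. A short Christoffel computation shows $r^2\delta^*\omega = \ol{\delta}^*(r^2\omega)$ (the mixed $(i,r)$ components of $\ol{\delta}^*(r^2\omega)$ cancel). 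The commutation identity $2\ol{\delta}\ol{\delta}^* + \ol{d}\,\ol{\delta} = \ol{\Delta}_1$, valid on the Ricci-flat cone $(\ol{M},\ol{g})$, combined with $\ol{\delta}(r^2\omega) = 0$ (Lemma \ref{hatdiv}) and $\ol{\Delta}_1(r^2\omega) = \Delta_1\omega - (n-1)\omega$ (Lemma \ref{hatdiv} with $\varphi=r$), then gives the contribution $\tfrac{1}{2}R(\Delta_1\omega - (n-1)\omega)$.

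No genuine obstacle is anticipated; the entire proof is a careful assembly of already-established commutation identities and pointwise Christoffel calculations, exactly parallel in spirit to the proofs of Lemmas \ref{coupled1forms} and \ref{hatdiv}. The only place one needs to think is the identification $r^2\delta^*\omega = \ol{\delta}^*(r^2\omega)$, since this is what allows the Ricci-flat commutation $\ol{\delta}\ol{\delta}^* = \tfrac{1}{2}\ol{\Delta}_1$ on coclosed forms to produce the $\Delta_1\omega - (n-1)\omega$ term cleanly.
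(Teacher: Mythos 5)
Your proposal is correct and arrives at the right answer; it is in the same computational spirit as the paper's proof but organizes the calculation more cleanly. The paper simply writes out the components of $\wh{\nabla}\wh{h}$ one by one and then traces, quoting $\delta\delta^*\omega=\frac{1}{2}(\Delta_1\omega-(n-1)\omega)$ directly on $(M,g)$. You instead decompose $\wh{\delta}$ summand by summand with the symmetric-product Leibniz rule, and for the $R$-term you lift to $(\ol M,\ol g)$: you identify $r^2\delta^*\omega=\ol\delta^*(r^2\omega)$ (which indeed holds — the mixed $(i,r)$-components of $\ol\delta^*(r^2\omega)$ cancel exactly as you say), apply the Ricci-flat Bochner identity $2\ol\delta\ol\delta^*+\ol d\ol\delta=\ol\Delta_1$ on the cone together with $\ol\delta(r^2\omega)=0$, and then reduce $\ol\Delta_1(r^2\omega)$ to $\Delta_1\omega-(n-1)\omega$ via Lemma \ref{hatdiv} with $\varphi=r$. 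This is a nice reorganization: the $-(n-1)\omega$ shift falls out of the conformal scaling on the Ricci-flat cone rather than being quoted from the Einstein Bochner identity on the link, but of course it is the same underlying identity viewed from two vantage points.

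Two small remarks. First, in the second summand there is a bookkeeping slip: you write $\wh{\nabla}_{(r\omega)^\sharp}(Q\,dr)=Q\,r^{-2}\omega^i\,\ol\nabla_i dr$, but $(r\omega)^\sharp$ (with $\sharp$ taken w.r.t. $\wh g$) is $r^{-1}g^{ij}\omega_j\partial_i$, so the coefficient is $r^{-1}$, not $r^{-2}$; after contracting with $\ol\nabla dr=r\cdot g$ this correctly gives $Q\omega$, which is what you state, so this is only a typo in the intermediate line. Second, note that both your computation and the paper's own proof produce the term $-(n+1)Q\,\omega$, whereas the lemma as printed has $-(n+1)Q\cdot r\omega$. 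That extra $r$ is a misprint in the statement: the version without $r$ is also the one consistent with the way the lemma is invoked in the proof of Lemma \ref{Wmu_Einsteinoperator} to derive the relation $r\partial_z P+r\partial_r Q+(n+1)Q=\frac{1}{2}(\mu-(n-1))R$.
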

\begin{proof}
	Straightforward calculations show that
	\begin{equation}
\begin{split}
	\wh{\nabla}_z\wh{h}_{zz}&=\wh{\nabla}_r\wh{h}_{rz}=0,\qquad \wh{\nabla}_i\wh{h}_{jz}=P\cdot r\cdot \nabla_i\omega_j,\\
	\wh{\nabla}_z\wh{h}_{zr}&=\wh{\nabla}_r\wh{h}_{rr}=0,\qquad \wh{\nabla}_i\wh{h}_{jr}=Q\cdot r\cdot \nabla_i\omega_j-Rr\cdot \delta^*\omega_{ij},\\
	\wh{\nabla}_z\wh{h}_{zk}&=\partial_zP \cdot r\cdot \omega_k,\qquad 	\wh{\nabla}_r\wh{h}_{rk}=\partial_r Q \cdot r\cdot \omega_k,\qquad
	\wh{\nabla}_i\wh{h}_{jk}=\nabla_i(\delta^*\omega)_{jk}+(n+1)Q\omega_k,
	\end{split}
\end{equation}
	and the proof follows from taking the trace and using the identity
	\begin{align}
	\delta\delta^*\omega=\frac{1}{2}(\Delta_1\omega-(n-1)\omega),
	\end{align}
	cf.\ Remark \ref{Killing}.
\end{proof}
\begin{lem}\label{philemma}
	Let $\omega\in D(M)$. Then,
	\begin{align}\label{dm1}
    \ol{\Delta}_2(dr\odot r\omega)&=-4\delta^*\omega+(\mu+n+3) dr\odot r^{-1}\omega,\\
 \label{dm2} \ol{\delta}^*\omega&={\delta}^*\omega-r^{-2}dr\odot r\omega .   
	\end{align}
\end{lem}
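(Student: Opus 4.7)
The plan is to verify each formula by a direct component computation using the Christoffel symbols \eqref{christoffelA2} of the warped product $(\ol{M},\ol{g}) = (\R_+\times M, dr^2+r^2g)$, pulling $\omega\in D(M)$ back to $\ol{M}$ (so that $\omega_r=0$).

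For \eqref{dm2}, I would first compute $\ol{\nabla}\omega$ directly from \eqref{christoffelA2}: the only Christoffels that enter are $\ol{\Gamma}_{ij}^k = \Gamma_{ij}^k$, $\ol{\Gamma}_{ij}^r = -rg_{ij}$ and $\ol{\Gamma}_{ir}^j = r^{-1}\delta_i^j$, and substitution gives $\ol{\nabla}_i\omega_j=\nabla_i\omega_j$, $\ol{\nabla}_i\omega_r = \ol{\nabla}_r\omega_i = -r^{-1}\omega_i$, and $\ol{\nabla}_r\omega_r = 0$. Symmetrizing reproduces $\delta^*\omega$ on the $(i,j)$-block and $-r^{-1}\omega_i$ on the $(i,r)$-block, which is precisely $-(r^{-2}dr\odot r\omega)_{ir}$, establishing \eqref{dm2}.

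For \eqref{dm1}, where $\mu$ is implicitly the eigenvalue $\Delta_1\omega = \mu\omega$, I would apply the Leibniz identity for the connection Laplacian on a tensor product of two $1$-forms,
\[
\ol{\Delta}_2(A\otimes B) = (\ol{\Delta}_1 A)\otimes B + A\otimes(\ol{\Delta}_1 B) - 2\langle \ol{\nabla}A,\ol{\nabla}B\rangle_{\ol{g}},
\]
with $A = dr$ and $B = r\omega$. Lemma \ref{gradhessdr} supplies $\ol{\nabla}dr = r\cdot g$ and $\ol{\Delta}_1 dr = nr^{-2}dr$, and Lemma \ref{hatdiv} specialized to $\varphi\equiv 1$ gives $\ol{\Delta}_1(r\omega) = (\mu+1)r^{-1}\omega$ along with the explicit components of $\ol{\nabla}(r\omega)$. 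The inner product term collapses: since $\ol{\nabla}dr$ has only the $(i,j)$-components $rg_{ij}$, contracting with $\ol{g}^{ij} = r^{-2}g^{ij}$ reduces $\langle \ol{\nabla}dr,\ol{\nabla}(r\omega)\rangle_{k\delta}$ to $r^{-1}\ol{\nabla}_k(r\omega)_\delta$.

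Summing the three contributions and then symmetrizing $\otimes\mapsto\odot$, the $(k,l)$-block collects to $-2(\nabla_k\omega_l+\nabla_l\omega_k) = -4(\delta^*\omega)_{kl}$, while the mixed $(k,r)$-block combines the contribution $(n+\mu+1)r^{-1}\omega_k$ from the two Laplacian terms with an additional $2r^{-1}\omega_k$ from the inner-product term, producing $(\mu+n+3)r^{-1}\omega_k = (\mu+n+3)(dr\odot r^{-1}\omega)_{kr}$; the $(r,r)$-block vanishes on both sides. The main obstacle will be disciplined index bookkeeping---keeping $M$- and $r$-indices cleanly separated, repeatedly invoking $\omega_r=0$ to kill cross terms, and carefully tracking the factors of $r$ that appear when pulling Christoffel symbols through the trace with $\ol{g}^{ij}=r^{-2}g^{ij}$.
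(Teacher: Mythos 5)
Your proof of \eqref{dm2} is identical to the paper's: you compute the four components $\ol{\nabla}_i\omega_j$, $\ol{\nabla}_i\omega_r$, $\ol{\nabla}_r\omega_i$, $\ol{\nabla}_r\omega_r$ from \eqref{christoffelA2} and symmetrize.

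For \eqref{dm1} you take a genuinely different route. The paper goes by brute force: it writes out the full first and second covariant derivatives $\ol{\nabla}(dr\odot r\omega)$ and $\ol{\nabla}^2(dr\odot r\omega)$ component by component and then traces against $\ol{g}$. You instead invoke the Leibniz/product formula for the rough Laplacian on a tensor product, $\ol{\Delta}_2(A\otimes B)=(\ol{\Delta}_1A)\otimes B+A\otimes(\ol{\Delta}_1B)-2\langle\ol{\nabla}A,\ol{\nabla}B\rangle_{\ol{g}}$ (contracting only the derivative slots), and feed in the already-known ingredients $\ol{\Delta}_1dr=nr^{-2}dr$, $\ol{\nabla}dr=r\cdot g$ from Lemma \ref{gradhessdr} and $\ol{\Delta}_1(r\omega)=(\mu+1)r^{-1}\omega$ from Lemma \ref{hatdiv} with $\varphi\equiv1$. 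I checked the bookkeeping and it closes correctly: the cross term contributes $-2\nabla_k\omega_l$ on the $(k,l)$-block and $+2r^{-1}\omega_k$ on the $(k,r)$-block, the two Laplacian terms contribute $(n+\mu+1)r^{-1}\omega$ on the mixed block, and symmetrizing gives $-4\delta^*\omega+(\mu+n+3)\,dr\odot r^{-1}\omega$ as claimed. The advantage of your approach is modularity — it reuses the two preceding lemmas rather than redoing the $\ol{\nabla}^2$ computation from scratch, and it sidesteps the rather error-prone index display in the paper's proof (which in fact carries a few typos). One worthwhile remark: the paper's statement \eqref{gradhessdr1} contains a typo, $\ol{\Delta}dr=nr^{-1}dr$, whereas the correct power, which you use and which also appears later in the paper's own text, is $\ol{\Delta}_1dr=nr^{-2}dr$. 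Also note that $\mu$ in the lemma is unexplained in the statement; your reading $\Delta_1\omega=\mu\omega$ is the intended one and matches how the lemma is invoked.
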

\begin{proof}
	By using \eqref{christoffelA2}, we compute
	\begin{equation}
\begin{split}
	\ol{\nabla}_i(dr\odot r\omega)_{jk}&=r^2(\omega_j g_{ik}+\omega_kg_{ij}),\qquad
	\ol{\nabla}_i(dr\odot r\omega)_{rr}=-2\omega_i,\\
	\ol{\nabla}_i(dr\odot r\omega)_{jr}&=\ol{\nabla}_i(dr\odot r\omega)_{rj}=r\nabla_i\omega_j,
	\end{split}
\end{equation}
	while the other components vanish. The second covariant derivative is computed as follows:
	\begin{equation}
\begin{split}
	\ol{\nabla}_{ij}(dr\odot r\omega)_{kl}&=r^2(\nabla_i\omega_k\cdot g_{jl}+\nabla_i\omega_l\cdot g_{jk}+\nabla_j\omega_k\cdot g_{il}+\nabla_j\omega_l\cdot g_{ik}),\\
	\ol{\nabla}_{rr}(dr\odot r\omega)_{kl}&=\ol{\nabla}_{ij}(dr\odot r\omega)_{rr}=0,\qquad
	\ol{\nabla}_{ij}(dr\odot r\omega)_{rr}=-2(\nabla_i\omega_j+\nabla_j\omega_i),\\
	\ol{\nabla}_{ij}(dr\odot r\omega)_{kr}&=\ol{\nabla}_{ij}(dr\odot r\omega)_{rk}=r(\nabla^2_{il}\omega_k-2\omega_j\cdot g_{ik}-\omega_k\cdot g_{ij}-\omega_i\cdot g_{jk}),\\
	\ol{\nabla}_{rr}(dr\odot r\omega)_{kr}&=\ol{\nabla}_{rr}(dr\odot r\omega)_{rl}=0,
	\end{split}
\end{equation}
	and by taking the trace with respect to $\ol{g}$, we obtain \eqref{dm1}.
	To prove \eqref{dm2}, it suffices to see that
	\begin{align}
	\ol{\nabla}_i\omega_j=\nabla_i\omega_j,\qquad 	\ol{\nabla}_i\omega_r=	\ol{\nabla}_r\omega_i=-r^{-1}\omega_i,\qquad \ol{\nabla}_r\omega_r=0,
	\end{align}
	and to use the definitions of $\ol{\delta}^*$ and $\delta^*$.
	\end{proof}
\begin{lem}
	Let $v\in C^{\infty}(M)$,  $P^{(i)},Q^{(j)},R,S$ smooth functions in two variables for $i=1,2,3$ and $j=1,2$ and define $\wh{h}\in C^{\infty}(S^2\wh{M})$ as
	\begin{equation}
\begin{split}
\wh{h} &=P^{(1)}[r,z]vdz\otimes dz+P^{(2)}[r,z]vdz\odot dr+P^{(3)}[r,z]vdr\otimes dr+Q^{(1)}[r,z]dz\odot rdv\\
&\qquad+Q^{(2)}[r,z]dr\odot rdv+R[r,z]r^2(n\nabla^2v+\Delta v g)+S[r,z]vr^2g.
	\end{split}
\end{equation}
Then the divergence of $\wh{h}$ is computed as
	\begin{equation}
\begin{split}
\wh{\delta}\wh{h}&=(-\partial_zP^{(1)}v-\partial_r P^{(2)}v+r^{-1}Q^{(1)}\Delta v-r^{-1}n P^{(2)}v)dz\\
&\qquad+ (-\partial_zP^{(2)}v-\partial_r P^{(3)}v+r^{-1}Q^{(2)}\Delta v-r^{-1}n P^{(3)}v+r^{1}nS)dr\\
&\qquad +(-r\cdot\partial_zQ^{(1)}-r\cdot\partial_rQ^{(2)}-(n+1)Q^{(2)}-S)dv+R(n-1)d(\Delta v-nv).
	\end{split}
\end{equation}
\end{lem}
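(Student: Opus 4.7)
The plan is to decompose $\wh{h}$ as a sum of seven summands $T_1,\ldots,T_7$ corresponding to the seven tensorial building blocks, apply $\wh{\delta}$ to each by linearity, and collect the results by $dz$, $dr$, $dv$, and a residual $M$-directional piece. The underlying framework is entirely captured by the Christoffel symbols of $\wh{M}$ listed in \eqref{christoffelA2} (with all $z$-Christoffels vanishing) together with the tensor-field formulas already established in this appendix: $dz$ is parallel on $\wh{M}$; $\wh{\nabla}\,dr = r\cdot g$ from Lemma \ref{gradhessdr}; $\wh{\nabla}(r\,dv) = r\nabla^2 v - dv\otimes dr$ from Lemma \ref{gradhessdr}; and $\wh{\nabla}_i(r^2g)_{jr} = -r\,g_{ij}$ by a direct Christoffel computation.

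The easy summands behave as follows. From $T_1 = P^{(1)}v\,dz\otimes dz$ one obtains $-\partial_z P^{(1)}\cdot v\,dz$ using only the parallelism of $dz$. From $T_2 = P^{(2)}v\,dz\odot dr$ and $T_3 = P^{(3)}v\,dr\otimes dr$ the trace of $\wh{\nabla}\,dr$ with $g^{-1}$ produces factors of $n$, yielding the $-r^{-1}nP^{(2)}v$ and $-r^{-1}nP^{(3)}v$ contributions to the $dz$- and $dr$-coefficients. From $T_4 = Q^{(1)}dz\odot rdv$ and $T_5 = Q^{(2)}dr\odot rdv$, the trace $\trace_g\nabla^2 v = -\Delta v$ produces the $r^{-1}Q^{(j)}\Delta v$-terms, while the $dv\otimes dr$-piece of $\wh{\nabla}(rdv)$ produces the $-r\partial_z Q^{(1)}$ and $-r\partial_r Q^{(2)}$ pieces of the $dv$-component together with the $-(n+1)Q^{(2)}dv$-coefficient from the Christoffel contraction. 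The summand $T_7 = S\,v\,r^2g$ contributes $r^{-1}nSv\,dr - S\,dv$ by a direct application of \eqref{christoffelA2}.

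The decisive piece is $T_6 = R\cdot r^2(n\nabla^2 v + \Delta v\cdot g)$, whose components lie entirely in the $M\times M$ block. A direct check via \eqref{christoffelA2} shows $\wh{\delta}_z T_6 = 0$ and (using the $g$-trace-freeness of $n\nabla^2 v + \Delta v\cdot g$) also $\wh{\delta}_r T_6 = 0$. The $M$-direction component collapses to $r^{-2}\cdot R r^2\,\delta_g(n\nabla^2 v + \Delta v\cdot g)$. Since $\nabla^2 v = \delta^*(dv)$, combining the identity of Remark \ref{Killing} with $\Delta_1(dv) = d(\Delta v - (n-1)v)$ from \eqref{commutation} yields $\delta_g(\nabla^2 v) = d\Delta v - (n-1)dv$ on Einstein with $\ric = (n-1)g$, and together with $\delta_g(\Delta v\cdot g) = -d\Delta v$ this gives the clean identity $\delta_g(n\nabla^2 v + \Delta v\cdot g) = (n-1)\,d(\Delta v - nv)$, producing the residual $R(n-1)\,d(\Delta v - nv)$ in the stated formula.

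Summing the seven contributions and grouping by $dz$, $dr$, and $dv$ (with the $R$-residual left as is) yields the asserted formula. The main obstacle is purely bookkeeping: tracking the $r^{-1}$-pieces generated by each $\ol{\nabla}$-correction and confirming that the cross-terms from $T_2,\ldots,T_7$ and the non-$M$ components of $T_6$ package precisely into the three displayed coefficients. The only conceptual input beyond tensor calculus is the Einstein commutation identity used in the $T_6$-step; everything else is a mechanical application of \eqref{christoffelA2} and the pointwise formulas from Lemma \ref{gradhessdr}.
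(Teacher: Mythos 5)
Your proof is correct and follows essentially the same route as the paper's: compute the components of $\wh{\nabla}\wh{h}$ for each building block via the Christoffel symbols \eqref{christoffelA2} and Lemma \ref{gradhessdr}, trace over $\wh{g}$, and invoke the Einstein-manifold identity $\delta(n\nabla^2 v + \Delta v\, g) = (n-1)\,d(\Delta v - nv)$ for the $T_6$ piece. One small caveat: the identity in Remark \ref{Killing} as printed reads $2\delta\delta^* + d\delta = \Delta_1 - (n-1)$, which applied to $\omega = dv$ would give $\delta\nabla^2 v = -(n-1)dv$; the correct sign is $2\delta\delta^* - d\delta = \Delta_1 - (n-1)$, and you evidently used the corrected version since your stated conclusion $\delta\nabla^2 v = d\Delta v - (n-1)dv$ is right (this is also the formula the paper uses, modulo a $\delta^*$-for-$\delta$ typo in its own proof).
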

\begin{proof}
	We compute
	\begin{equation}
\begin{split}
	\wh{\nabla}_z\wh{h}_{zz}&=\partial_zP^{(1)}v,\qquad\wh{\nabla}_r\wh{h}_{rz}=\partial_rP^{(2)}v,\qquad 
	\wh{\nabla}_i\wh{h}_{jz}=Q^{(1)}r\nabla^2_{ij}v+r\cdot g_{ij}P^{(2)}\cdot v,\\
	\wh{\nabla}_z\wh{h}_{zr}&=\partial_zP^{(2)}v,\qquad\wh{\nabla}_r\wh{h}_{rz}=\partial_rP^{(3)}v,\qquad 
	\wh{\nabla}_i\wh{h}_{jr}=Q^{(2)}r\nabla^2_{ij}v-\frac{1}{r}\wh{h}_{ij}+r\cdot g_{ij}P^{(3)}v.
	\end{split}
\end{equation}
	By taking the trace with respect to $\wh{g}$, we obtain the $z$ and the $r$-component of the divergence. To compute the $dv$- component, we calculate
	\begin{equation}
\begin{split}
	\wh{\nabla}_z\wh{h}_{zk}&=\partial_zQ^{(1)}rdv_k,\qquad\wh{\nabla}_r\wh{h}_{rk}=\partial_rQ^{(2)}rdv_k,\\
	\wh{\nabla}_i\wh{h}_{jk}&=\nabla_i\wh{h}_{jk}+r^2g_{ij}Q^{(2)}dv_k+r^2g_{ik}Q^{(2)}dv_j.	
	\end{split}
\end{equation}
	On the right hand side of the last equation, $\wh{h}_{jk}$ is understood as a parameter-dependent section of $S^2M$ and the covariant derivative has to be understood in that way. We have,
	\begin{align}
	\delta^*\nabla^2v=d(\Delta v-(n-1)v),\qquad \delta(\Delta v g)=-d(\Delta v),\qquad \delta(vr^2g)=-r^2 dv,
	\end{align}
	where the first equation follows from \eqref{commutation} and the other identities are standard.
 Using these identities and taking the trace with respect to $\wh{g}$ yields the last component of the divergence.
	\end{proof}
\begin{lem}
	Consider the $1$-form $dr\in C^{\infty}(T^*\ol{M})$ and $v\in C^{\infty}(M)$. Then we have the identities
	\begin{align}
	\label{lem1}\ol{\Delta}(dr\otimes dr)&=2r^{-2}(ndr\otimes dr-r^2g),\\
	\label{lem2}\ol{\Delta}(r^2g)&=2r^{-2}(r^2g-ndr\otimes dr),\\
	\label{lem3}\ol{\Delta}(dr\odot rdv)&=-4r^{-2}\Delta v\cdot dr\otimes dr+r^{-2}dr\odot rd(\Delta v+4v)-4\nabla^2v,\\
	\label{lem4}\ol{\nabla}^2(r^2v)&=r^2\nabla^2v+2v\ol{g}+dr\otimes rdv,\\
	\label{lem5}\ol{\nabla}^2v&=\nabla^2v-r^{-2}dr\odot rdv,\\ 
	\label{lem6}\ol{\Delta}(r^2v)&=(\Delta-2n-2)v.
	\end{align}
	\end{lem}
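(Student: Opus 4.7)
The six identities all live on the Ricci-flat cone $(\overline{M},\overline{g})$, and the plan is to reduce every one of them to a direct component computation using only the Christoffel symbols of $\overline{g}$ collected in \eqref{christoffelA2} and the building-block formulas for $\overline{\nabla}dr$, $\overline{\Delta}dr$, $\overline{\nabla}(rdv)$, $\overline{\Delta}_1(rdv)$ already supplied by Lemma~\ref{gradhessdr}. The systematic tool is the Leibniz rule for the connection Laplacian on a tensor product,
\begin{align*}
\overline{\Delta}(T\otimes S)=(\overline{\Delta}T)\otimes S+T\otimes(\overline{\Delta}S)-2\,\overline{g}^{\alpha\beta}\,\overline{\nabla}_{\alpha}T\otimes \overline{\nabla}_{\beta}S,
\end{align*}
combined with the fact that $\overline{g}^{ij}=r^{-2}g^{ij}$ on the $M$-factor and $\overline{g}^{rr}=1$.

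For \eqref{lem1} I would apply the Leibniz rule with $T=S=dr$. The first two terms give $2nr^{-2}\,dr\otimes dr$ using $\overline{\Delta}dr$, while the cross-gradient contracts as $\overline{g}^{ij}(rg_{i\cdot})(rg_{j\cdot})=g$, contributing $-2g$. Identity \eqref{lem2} then drops out for free: since $\overline{g}=dr\otimes dr+r^{2}g$ is parallel, $\overline{\Delta}(r^{2}g)=-\overline{\Delta}(dr\otimes dr)$, which is exactly the claimed formula. For \eqref{lem3} the same Leibniz rule applied to $dr\odot rdv$, using $\overline{\Delta}_{1}(rdv)=r^{-2}\bigl(r(\Delta_{1}+1)dv-2\Delta v\cdot dr\bigr)$ and $\overline{\nabla}(rdv)=r\nabla dv-dv\otimes dr$ from Lemma~\ref{gradhessdr}, produces the $-4\nabla^{2}v$ term out of the cross-gradient $-2\overline{g}^{\alpha\beta}\overline{\nabla}_{\alpha}dr\odot\overline{\nabla}_{\beta}(rdv)$ (since $\overline{\nabla}dr=rg$), while the two Laplacian terms supply the coefficients of $dr\otimes dr$ and $dr\odot rdv$.

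The three remaining identities involve scalars and Hessians, and can be derived by bare Christoffel bookkeeping. For \eqref{lem5}, because $\partial_{r}v=0$ one immediately reads $\overline{\nabla}^{2}_{ij}v=\nabla^{2}_{ij}v$, $\overline{\nabla}^{2}_{ir}v=-r^{-1}(dv)_{i}$ and $\overline{\nabla}^{2}_{rr}v=0$, which is precisely $\nabla^{2}v-r^{-2}dr\odot rdv$. Identity \eqref{lem4} then follows from the Hessian product rule applied to the product $r^{2}\cdot v$: one uses $\overline{\nabla}^{2}(r^{2})=2\overline{\nabla}dr\cdot r+2\,dr\otimes dr=2\overline{g}$ (again from $\overline{\nabla}dr=rg$), combined with \eqref{lem5} and $d(r^{2})\odot dv=2\,dr\odot rdv$, and the $dr\odot rdv$ contributions reassemble to give the claimed right-hand side. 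Finally, \eqref{lem6} is immediate from the scalar Laplacian expression \eqref{olDelta}: since $\partial_{r}v=0$, applying $\overline{\Delta}=-\partial^{2}_{rr}-nr^{-1}\partial_{r}+r^{-2}\Delta$ to $r^{2}v$ yields $-2v-2nv+\Delta v=(\Delta-2n-2)v$.

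There is no conceptual obstacle in this lemma; the only real difficulty is the bookkeeping. Each identity involves $(0,2)$-tensors with components in both the $M$-directions and the radial direction, and one must systematically separate the four index blocks $(ij)$, $(ir)$, $(ri)$, $(rr)$ when applying the Christoffel rules. Once the three building blocks from Lemma~\ref{gradhessdr} are used as atomic inputs, every remaining step is a contraction with $\overline{g}^{ij}=r^{-2}g^{ij}$ or a radial derivative, and no nontrivial curvature terms appear because $(\overline{M},\overline{g})$ is Ricci-flat.
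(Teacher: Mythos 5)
Your proposal is correct and the overall strategy matches the paper's: reduce everything to explicit computations on the Ricci-flat cone $(\ol{M},\ol{g})$, using the warped-product Christoffel symbols \eqref{christoffelA2} and the building blocks from Lemma \ref{gradhessdr}. Identities \eqref{lem1}, \eqref{lem2}, \eqref{lem5} and \eqref{lem6} are handled in essentially the same way as the paper. For \eqref{lem3} and \eqref{lem4} you take a slightly cleaner route: the paper works out the second covariant derivatives of $dr\odot rdv$ and of $r^2v$ component by component from scratch, whereas you apply the rough-Laplacian Leibniz rule and the Hessian product rule and then recycle the already-proved formulas for $\ol{\nabla}dr$, $\ol{\Delta}dr$, $\ol{\nabla}(rdv)$, $\ol{\Delta}_1(rdv)$ and \eqref{lem5}. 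Both are the same length of computation in the end, but your version makes better structural use of Lemma \ref{gradhessdr} and avoids redoing Christoffel bookkeeping that has already been done once.

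Two small remarks on discrepancies with the printed text, both of which are typos in the paper rather than errors on your part. First, \eqref{gradhessdr1} prints $\ol{\Delta}dr=nr^{-1}dr$, but the correct degree-$(-1)$ homogeneous answer (which the paper itself uses everywhere, including in the proof of Lemma \ref{Wmuoneforms}) is $\ol{\Delta}dr=nr^{-2}dr$, and you have used the correct version. Second, the statement of \eqref{lem4} prints $dr\otimes rdv$, which cannot be right because the Hessian of the scalar $r^2v$ must be a symmetric $2$-tensor; the paper's own component computation gives $\ol{\nabla}^2_{ri}(r^2v)=\ol{\nabla}^2_{ir}(r^2v)=r\partial_iv$, which is $dr\odot rdv$ in the paper's $\odot$ convention, and indeed the Hessian product rule you invoke produces $2v\ol{g}+r^2\nabla^2v+dr\odot rdv$. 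So your derivation in fact yields the correct, symmetric answer; note that the trace of the $dr\odot rdv$ piece against $\ol{g}$ vanishes, so \eqref{lem6} is unaffected either way, which is why the typo does not propagate.
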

\begin{proof}
	Equation \eqref{lem1} follows from \eqref{gradhessdr1}, since
	\begin{align}
	\ol{\Delta}_2(dr\otimes dr)=(\ol{\Delta}dr)\otimes dr+dr\otimes(\ol{\Delta} dr)-2\langle\ol{\nabla}dr,\ol{\nabla}dr\rangle_{\ol{g}}=2r^{-2}(ndr\otimes dr-r^2g),
	\end{align}
	and \eqref{lem2} follows from the fact that
	\begin{align}
	0=\ol{\Delta}_2(\ol{g})=\ol{\Delta}_2(dr\otimes dr)+\ol{\Delta}_2(r^2g).
	\end{align}
	To prove \eqref{lem3}, we compute
	\begin{equation}
\begin{split}
    \ol{\nabla}_i(dr\odot rdv)_{jk}&=r^2g_{ij}dv_k,\qquad  \ol{\nabla}_i(dr\odot rdv)_{jr}= \ol{\nabla}_i(dr\odot rdv)_{rj}=r\nabla_idv_j,\\
     \ol{\nabla}_i(dr\odot rdv)_{rr}&=-2dv_i,\\ \ol{\nabla}_r(dr\odot rdv)_{jk}&=\ol{\nabla}_r(dr\odot rdv)_{rr}=\ol{\nabla}_r(dr\odot rdv)_{ir}=\ol{\nabla}_r(dr\odot rdv)_{ri}=0.
	\end{split}
\end{equation}
	By taking the covariant derivative once again, we obtain
	\begin{equation}
\begin{split}
	\ol{\nabla}^2_{ij}(dr\odot rdv)_{kl}&=r^2(g_{jk}\nabla_idv_l+g_{jl}\nabla_idv_k+g_{ik}\nabla_jdv_l+g_{il}\nabla_jdv_k),\\
	\ol{\nabla}^2_{ij}(dr\odot rdv)_{k0}&=\ol{\nabla}^2_{ij}(dr\odot rdv)_{0k}=r\nabla^2_{ij}dv_k-2r(g_{ik}dv_j+g_{jk}dv_i+g_{ij}dv_k),\\
	\ol{\nabla}^2_{rr}(dr\odot rdv)_{rr}&=\ol{\nabla}^2_{rr}(dr\odot rdv)_{rl}=\ol{\nabla}^2_{rr}(dr\odot rdv)_{kl}=0,
		\end{split}
	\end{equation}
	and \eqref{lem3} follows by taking the trace with respect to $\ol{g}$ and using $\Delta_1\circ d=d\circ (\Delta-(n-1))$.
	For the proof of \eqref{lem4}, we calculate
	\begin{align}
	\ol{\nabla}^2_{ij}(r^2v)=r^2(\nabla^2_{ij}v+2g_{ij}v),\qquad \ol{\nabla}^2_{rr}(r^2v)=2v,\qquad \ol{\nabla}^2_{ri}(r^2v)=\ol{\nabla}^2_{ir}(r^2v)=r\partial_iv,
	\end{align}
	whereas \eqref{lem5} follows from
	\begin{align}
	\ol{\nabla}^2_{ij}v=\nabla^2_{ij}v,\qquad \ol{\nabla}^2_{ri}v=\ol{\nabla}^2_{ir}v=-r^{-1}\partial_iv,\qquad \ol{\nabla}^2_{rr}v=0.
	\end{align}
	Finally, \eqref{lem6} is a direct application of \eqref{olDelta}.
\end{proof}
\section{Verifying PDE's in two variables}
In this section, we consider functions in two variables and the operator
\begin{align}
\wh{\Delta}=-\partial^2_{zz}-\partial^2_{rr}-n r^{-1}\partial_r.
\end{align}
Consider the vector field $V=r\partial_z-z\partial_r$. As a differential operator, it satisfies the following commutation relations:
\begin{align}
\label{comm_1}[\partial_V,\wh{\Delta}]&=n r^{-2}\cdot \partial_V,\\
\label{comm_2}[\partial_V,r^{-2}]&=2z\cdot r^{-3},\\
\label{comm_3}[\partial_V,z r^{-1}]&=1+z^2\cdot r^{-2}.
\end{align}
Let $f=f(z,r)$ be a smooth function and $g=-zr^{-1}\cdot f$. Then for $r\neq0$, we have the equations
\begin{align}
\label{rzformula_1}\wh{\Delta}g&=-z r^{-1}\wh{\Delta}f+(n-2)r^{-2}\cdot g+2r^{-2}\cdot \partial_Vf,\\
\label{rzformula_2}r\partial_z f+r\partial_rg&=\partial_Vf-g.
\end{align}

\begin{lem}\label{formulas1}
	Let $\lambda>0$ and $P=P(z,r)$ be a solution of the equation
	\begin{align}\wh{\Delta}P+\lambda r^{-2}P=0,
\end{align}	
	 and $Q$ and $R$ be defined by 
\begin{align}\label{defQ}Q&=-\frac{z}{r}\cdot P,\\
\label{defR}\lambda\cdot R&=r\partial_z P+r\partial_r Q+n\cdot Q.
\end{align}	
	 Then
\begin{align}
\label{eqQ}\wh{\Delta}Q+r^{-2}(\lambda+n)Q-2r^{-2}\lambda R&=0,\\ 
\label{eq3R}\wh{\Delta}R+r^{-2}(\lambda+2-n)R-2r^{-2}Q&=0.
\end{align}
\end{lem}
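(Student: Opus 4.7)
The plan is to derive \eqref{eqQ} directly from the two ready-made identities \eqref{rzformula_1} and \eqref{rzformula_2}, and then to obtain \eqref{eq3R} by applying $\wh{\Delta}$ to the defining relation \eqref{defR} and exploiting the commutator \eqref{comm_1}. The key auxiliary identity, which ties the three pieces together, is the following rewriting of \eqref{defR}: combining it with \eqref{rzformula_2} for $f=P$, $g=Q$ yields
\begin{align*}
\lambda R = r\partial_z P + r\partial_r Q + nQ = \partial_V P - Q + nQ = \partial_V P + (n-1)Q,
\end{align*}
so that $\partial_V P = \lambda R - (n-1)Q$.

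For \eqref{eqQ}, I apply \eqref{rzformula_1} with $f=P$ and $g=Q$:
\begin{align*}
\wh{\Delta}Q = -zr^{-1}\wh{\Delta}P + (n-2)r^{-2}Q + 2r^{-2}\partial_V P.
\end{align*}
Substituting $\wh{\Delta}P = -\lambda r^{-2}P$ and using $zr^{-1}P = -Q$, the first term becomes $-\lambda r^{-2}Q$. Replacing $\partial_V P$ via the auxiliary identity above and collecting coefficients of $Q$ (the $-\lambda+(n-2)-2(n-1)=-(\lambda+n)$ bookkeeping is the only arithmetic involved) gives \eqref{eqQ} immediately.

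For \eqref{eq3R}, the plan is to apply $\wh{\Delta}$ to $\lambda R = \partial_V P + (n-1)Q$. Using \eqref{comm_1} to move $\wh{\Delta}$ past $\partial_V$, one obtains
\begin{align*}
\wh{\Delta}\partial_V P = \partial_V\wh{\Delta}P - nr^{-2}\partial_V P = -\lambda\,\partial_V(r^{-2}P) - nr^{-2}\partial_V P.
\end{align*}
Expanding $\partial_V(r^{-2}P)$ with \eqref{comm_2} and using $zr^{-3}P=-r^{-2}Q$ produces $\wh{\Delta}\partial_V P = -(n+\lambda)r^{-2}\partial_V P + 2\lambda r^{-2}Q$. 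Plugging this together with \eqref{eqQ} into $\lambda\wh{\Delta}R = \wh{\Delta}\partial_V P + (n-1)\wh{\Delta}Q$, then substituting $\partial_V P = \lambda R - (n-1)Q$ one more time, the $Q$-terms conveniently cancel (the cross-terms $(n+\lambda)(n-1)$ from $\partial_V P$ and $-(n-1)(n+\lambda)$ from $\wh{\Delta}Q$ annihilate each other), leaving $\lambda\wh{\Delta}R = \lambda r^{-2}(n-\lambda-2)R + 2\lambda r^{-2}Q$. Dividing by $\lambda>0$ yields \eqref{eq3R}.

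I do not expect any genuine obstacle: both equations reduce to a short algebraic manipulation once the relation $\partial_V P = \lambda R - (n-1)Q$ is in hand. The only thing that requires care is the bookkeeping of the coefficients in the second step, where it is essential to check that the $Q$-contributions from $\wh{\Delta}\partial_V P$ and from $(n-1)\wh{\Delta}Q$ cancel up to the expected $2r^{-2}Q$ term; this is precisely where the commutator identities \eqref{comm_1}--\eqref{comm_2} and the hypothesis $\wh{\Delta}P = -\lambda r^{-2}P$ must line up correctly.
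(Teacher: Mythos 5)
Your proposal is correct and follows essentially the same route as the paper's proof: both derive the key reformulation $\partial_V P = \lambda R - (n-1)Q$ from \eqref{rzformula_2}, obtain \eqref{eqQ} from \eqref{rzformula_1} together with the PDE for $P$, and then apply $\wh{\Delta}$ to $\lambda R = \partial_V P + (n-1)Q$, moving $\wh{\Delta}$ past $\partial_V$ via \eqref{comm_1} and expanding $\partial_V(r^{-2}P)$ via \eqref{comm_2}. The only difference is a minor reorganization of the intermediate bookkeeping, and the coefficient cancellations you flag are exactly the ones the paper works through.
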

\begin{proof}
By \eqref{rzformula_1}, 
\begin{align}
\wh{\Delta}Q&=-z r^{-1}\wh{\Delta}P+(n-2)r^{-2}\cdot Q+2r^{-2}\cdot \partial_VP=(n-\lambda-2)r^{-2}\cdot Q+2r^{-2}\cdot \partial_VP,
\end{align}
and by \eqref{rzformula_2}, \eqref{defR} is equivalent to
\begin{align}\label{newdefR}
\partial_VP=\lambda R-(n-1)Q,
\end{align}
which proves \eqref{eqQ}. To prove \eqref{eq3R}, we use \eqref{newdefR} and \eqref{eqQ} to compute
\begin{align*}
\lambda\wh{\Delta}R&=(n-1)\wh{\Delta}Q+\wh{\Delta}\partial_V P\\
&=(n-1)[-(\lambda+n)r^{-2}Q+2\lambda r^{-2}R]+\partial_V(\wh{\Delta}P)+[\wh{\Delta},\partial_V]P\\
&=-(\lambda+n)(n-1)r^{-2}Q+2\lambda(n-1)r^{-2}R-\lambda\partial_V(r^{-2}P)-nr^{-2}\cdot\partial_VP\\
&=-(\lambda+n)(n-1)r^{-2}Q+2\lambda(n-1)r^{-2}R-(\lambda+n)r^{-2}\partial_VP+\lambda[r^{-2},\partial_V]P\\
&=-(\lambda+n)(n-1)r^{-2}Q+2\lambda(n-1)r^{-2}R-(\lambda+n)r^{-2}(\lambda R-(n-1)Q)+2\lambda r^{-2}Q\\
&=2\lambda r^{-2}Q-\lambda(\lambda+2-n)r^{-2}R,
\end{align*}
which yields \eqref{eq3R}.
\end{proof}
\begin{lem}\label{formulas2}
	Let $\mu>n-1$ and $P=P(z,r)$ be a solution of the equation
	\begin{align}\wh{\Delta}P+r^{-2}(\mu+1)P&=0,
\end{align}	
	 and $Q$ and $R$ be defined by 
\begin{align}\label{def2Q}Q&=-\frac{z}{r}\cdot P,\\
\label{def2R}\frac{1}{2}(\mu-(n-1))R&=r\cdot\partial_zP+r\cdot \partial_rQ+(n+1)Q.
\end{align}	
	 Then
\begin{align}
\label{eq2Q}\wh{\Delta}Q+r^{-2}(\mu+n+3)Q-r^{-2}(\mu+1-n)R&=0,\\ 
\label{eq2R}\wh{\Delta}R+r^{-2}(\mu+1-n)R-4r^{-2}Q&=0.
\end{align}
\end{lem}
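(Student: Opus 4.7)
The structure is identical to Lemma \ref{formulas1}: the commutation identities \eqref{comm_1}--\eqref{comm_3} together with the product formulas \eqref{rzformula_1}--\eqref{rzformula_2} are tailor-made to reduce any such verification to bookkeeping. The plan is therefore to mimic the proof of Lemma \ref{formulas1} step by step, keeping track of the modified constants.

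First I would apply \eqref{rzformula_1} with $f=P$ and $g=Q$. Since $Q=-zr^{-1}P$ and $\wh{\Delta}P=-(\mu+1)r^{-2}P$, the first two terms simplify to
\begin{align*}
\wh{\Delta}Q=(n-3-\mu)r^{-2}Q+2r^{-2}\partial_V P.
\end{align*}
Next, \eqref{rzformula_2} lets me rewrite the defining identity \eqref{def2R} in the cleaner form
\begin{align*}
\tfrac{1}{2}(\mu-(n-1))R=\partial_V P+nQ,
\end{align*}
since $r\partial_z P+r\partial_r Q=\partial_V P-Q$ absorbs the lower-order $Q$ terms. Substituting $2r^{-2}\partial_V P=(\mu-n+1)r^{-2}R-2nr^{-2}Q$ into the expression for $\wh{\Delta}Q$ gives exactly \eqref{eq2Q}.

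For \eqref{eq2R}, I would apply $\wh{\Delta}$ to the relation $\tfrac{1}{2}(\mu-n+1)R=\partial_V P+nQ$, yielding
\begin{align*}
\tfrac{1}{2}(\mu-n+1)\wh{\Delta}R=\wh{\Delta}\partial_V P+n\wh{\Delta}Q.
\end{align*}
The first term on the right is handled using $[\wh{\Delta},\partial_V]=-nr^{-2}\partial_V$ from \eqref{comm_1} and $[\partial_V,r^{-2}]=2zr^{-3}$ from \eqref{comm_2}, together with the equation for $P$; this produces a combination of $r^{-2}Q$ (via $-zr^{-1}P=Q$) and $r^{-2}\partial_V P$. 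The second term is \eqref{eq2Q}, already proved. Eliminating $\partial_V P$ one more time via the cleaned-up definition of $R$ and collecting terms should yield exactly $\wh{\Delta}R=4r^{-2}Q-(\mu-n+1)r^{-2}R$, after which I divide by $\tfrac{1}{2}(\mu-n+1)$, using the hypothesis $\mu>n-1$.

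The only real obstacle is arithmetic: one has to verify that the $Q$-coefficients $2(\mu+1)+n(\mu+n+1)-n(\mu+n+3)$ collapse to $2(\mu+1-n)$ and that the $R$-coefficients $-\tfrac{1}{2}(\mu+n+1)(\mu-n+1)+n(\mu-n+1)$ collapse to $-\tfrac{1}{2}(\mu-n+1)^2$. Both simplifications are immediate once one factors $(\mu-n+1)$ out of the $R$ terms. No new ideas are required beyond those already used for Lemma \ref{formulas1}.
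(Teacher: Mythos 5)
Your proposal is correct and matches the paper's proof essentially line by line: you apply \eqref{rzformula_1} and \eqref{rzformula_2} to rewrite $\wh{\Delta}Q$ and the defining relation for $R$ in terms of $\partial_V P$, substitute to obtain \eqref{eq2Q}, and then apply $\wh{\Delta}$ to the cleaned-up definition of $R$ and use the commutators \eqref{comm_1}--\eqref{comm_2} together with \eqref{eq2Q} to obtain \eqref{eq2R}. The arithmetic you flag as the "only real obstacle" — the $Q$-coefficient collapsing to $2(\mu+1-n)$ and the $R$-coefficient to $-\tfrac{1}{2}(\mu-n+1)^2$ — checks out and agrees with the paper's computation.
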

\begin{proof}
By \eqref{rzformula_1}, 
\begin{align}
\wh{\Delta}Q&=-z r^{-1}\wh{\Delta}P+(n-2)r^{-2}\cdot Q+2r^{-2}\cdot \partial_VP=(n-\mu-3)r^{-2}\cdot Q+2r^{-2}\cdot \partial_VP,
\end{align}
and by \eqref{rzformula_2}, \eqref{def2R} is equivalent to
\begin{align}
\partial_VP=\frac{1}{2}(\mu+1-n) R-nQ,
\end{align}
which proves \eqref{eq2Q}. To prove \eqref{eqR}, we use \eqref{newdefR} and \eqref{eqQ} to compute
\begin{align*}
\frac{1}{2}(\mu+1-n)\wh{\Delta}R&=\wh{\Delta}\partial_VP+n\wh{\Delta}Q\\
&=\partial_V\wh{\Delta}P+[\wh{\Delta},\partial_V]P+n[(\mu+1-n)r^{-2}R-(\mu+n+3)r^{-2}Q]\\
&=-\partial_V((\mu+1)r^{-2}P)-nr^{-2}\partial_VP+n[(\mu+1-n)r^{-2}R-(\mu+n+3)r^{-2}Q]\\
&=-(\mu+1)r^{-2}\partial_VP+(\mu+1)[r^{-2},\partial_V]P-nr^{-2}\partial_VP\\
&\qquad+n[(\mu+1-n)r^{-2}R-(\mu+n+3)r^{-2}Q]\\
&=-(\mu+n+1)r^{-2}\partial_VP-2(\mu+1)z r^{-3}P\\
&\qquad+n[(\mu+1-n)r^{-2}R-(\mu+n+3)r^{-2}Q]\\
&=-(\mu+n+1)r^{-2}(\frac{1}{2}(\mu+1-n)R-nQ)+2(\mu+1)r^{-2}Q\\
&\qquad+n[(\mu+1-n)r^{-2}R-(\mu+n+3)r^{-2}Q]\\
&=\frac{1}{2}(\mu+1-n)[(n-\mu-1)r^{-2}R+4r^{-2}Q],
\end{align*}
which yields \eqref{eq2R}.
\end{proof}
\begin{lem}\label{formulas3}
	Let $\lambda>n$ and
	 $P^{(1)}=P^{(1)}(z,r)$ be a solution of the equation
	\begin{align}\wh{\Delta}P^{(1)}+\lambda r^{-2} P^{(1)}&=0,
\end{align}	
	 and $P^{(2)},P^{(3)},Q^{(1)},Q^{(2)},R$ and $S$ be defined by 
\begin{align}\label{defP^{(2)}}	P^{(2)}&=-\frac{z}{r}P^{(1)},\\
\label{defP^{(3)}} P^{(3)}&=-\frac{z}{r}P^{(2)}=\frac{z^2}{r^2}P^{(1)},\\
\label{defQ^{(2)}}  Q^{(2)}&=-\frac{z}{r}Q^{(1)},\\
\label{defS}  nS&=-P^{(1)}-	P^{(3)},\\
\label{defQ^{(1)}}\lambda Q^{(1)}&=r\cdot\partial_zP^{(1)}+r\cdot\partial_rP^{(2)}+nP^{(2)},\\
\label{def2Q^{(2)}}\lambda Q^{(2)}&=r\cdot\partial_zP^{(2)}+r\cdot\partial_rP^{(3)}+nP^{(3)}-n S,\\
\label{def3R}(n-1)(\lambda-n) R&=r\cdot\partial_zQ^{(1)}+r\cdot\partial_rQ^{(2)}+(n+1)Q^{(2)}+S.
\end{align}	
	 Then,
\begin{align}
\label{eqP^{(1)}}\wh{\Delta}P^{(1)}+\lambda r^{-2}P^{(1)}&=0,\\
\label{eqP^{(2)}}\wh{\Delta}P^{(2)}+(\lambda+n)r^{-2}P^{(2)}-2r^{-2}\lambda Q^{(1)}&=0,\\
\label{eqQ^{(1)}}\wh{\Delta}Q^{(1)}+(\lambda-n+2)r^{-2}Q^{(1)}-2r^{-2}P^{(2)}&=0,\\
\label{eqP^{(3)}}\wh{\Delta}P^{(3)}+(\lambda+2n)r^{-2}P^{(3)}-2nSr^{-2}-4\lambda r^{-2}Q^{(2)}&=0,\\
\label{eq2S}\wh{\Delta}S+(\lambda+2)r^{-2}S-2r^{-2}P^{(3)}+\frac{4}{n}r^{-2}\lambda Q^{(2)}&=0,\\
\label{eqQ^{(2)}}\wh{\Delta}Q^{(2)}+(\lambda+4)r^{-2}Q^{(2)}-2r^{-2}P^{(3)}+2r^{-2}S+2(n-1)(n-\lambda)r^{-2}R&=0,\\
\label{eq4R}\wh{\Delta}R+(\lambda-2n+2)r^{-2}R-\frac{4}{n}r^{-2}Q^{(2)}&=0.
\end{align}
\end{lem}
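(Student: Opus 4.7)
The proof follows the template established by Lemma \ref{formulas1} and Lemma \ref{formulas2}: the only tools are the commutation identities \eqref{comm_1}--\eqref{comm_3} together with the two explicit formulas \eqref{rzformula_1} and \eqref{rzformula_2}, deployed in a carefully chosen order. The key organizing observation is that the operation $f\mapsto -\frac{z}{r}f$ maps the chain $P^{(1)}\mapsto P^{(2)}\mapsto P^{(3)}$ and $Q^{(1)}\mapsto Q^{(2)}$, and by \eqref{rzformula_1} induces a transparent shift of the PDE coefficients, while \eqref{rzformula_2} converts each defining relation into a clean expression for a $\partial_V$-derivative.

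First, \eqref{eqP^{(1)}} is just the hypothesis. The triple $(P^{(1)},P^{(2)},Q^{(1)})$ satisfies the hypotheses of Lemma \ref{formulas1} verbatim (with $P,Q,R$ relabelled), so \eqref{eqP^{(2)}} and \eqref{eqQ^{(1)}} are immediate. For \eqref{eqP^{(3)}} I would apply \eqref{rzformula_1} with $f=P^{(2)}$ and $g=P^{(3)}$, substitute \eqref{eqP^{(2)}}, and rewrite \eqref{def2Q^{(2)}} via \eqref{rzformula_2} as $\partial_V P^{(2)}=\lambda Q^{(2)}-(n-1)P^{(3)}+nS$; the identifications $\frac{z}{r}P^{(2)}=-P^{(3)}$ and $\frac{z}{r}Q^{(1)}=-Q^{(2)}$ then produce the correct coefficients. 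The argument for \eqref{eqQ^{(2)}} is entirely parallel, with $f=Q^{(1)}$, $g=Q^{(2)}$, substitution of \eqref{eqQ^{(1)}}, and use of \eqref{def3R} combined with \eqref{rzformula_2} in the form $\partial_V Q^{(1)}=(n-1)(\lambda-n)R-nQ^{(2)}-S$.

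Equation \eqref{eq2S} follows by applying $\wh{\Delta}$ to the defining relation $nS=-P^{(1)}-P^{(3)}$ and substituting \eqref{eqP^{(1)}} together with the just-established \eqref{eqP^{(3)}}; the combination $\lambda r^{-2}(P^{(1)}+P^{(3)})$ collapses to $-n\lambda r^{-2}S$ via that same defining relation, yielding \eqref{eq2S} after dividing by $n$.

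The genuinely delicate step, which I expect to be the main obstacle, is \eqref{eq4R}. My plan is to apply $\wh{\Delta}$ to the identity $(n-1)(\lambda-n)R=\partial_V Q^{(1)}+nQ^{(2)}+S$ obtained from \eqref{def3R} and \eqref{rzformula_2}. Handling $\wh{\Delta}\,\partial_V Q^{(1)}$ requires the commutator $[\wh{\Delta},\partial_V]=-nr^{-2}\partial_V$ from \eqref{comm_1} and a further use of $[\partial_V,r^{-2}]=2zr^{-3}$ from \eqref{comm_2} to push $\partial_V$ through the $r^{-2}$ factor on the right-hand side of \eqref{eqQ^{(1)}}. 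The resulting expression initially mixes all six auxiliary functions, but after inserting the previously established equations for $\wh{\Delta}Q^{(2)}$ and $\wh{\Delta}S$ and collapsing repeatedly via $\frac{z}{r}Q^{(1)}=-Q^{(2)}$ and $\frac{z}{r}P^{(2)}=-P^{(3)}$, every contribution other than $(\lambda-2n+2)r^{-2}R$ and $-\frac{4}{n}r^{-2}Q^{(2)}$ should cancel. The hypothesis $\lambda>n$ is exactly what lets us divide by $(n-1)(\lambda-n)$ to obtain \eqref{eq4R}. The sharpest point will be tracking the emergence of the unusual prefactor $\frac{4}{n}$, which requires careful interplay between \eqref{defS} and the coefficients of $P^{(3)}$ and $S$ appearing in \eqref{eqP^{(3)}} and \eqref{eq2S}.
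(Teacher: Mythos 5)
Your proposal is essentially correct and follows the paper's organizational template (reformulate the definitions \eqref{defQ^{(1)}}--\eqref{def3R} as $\partial_V$-identities via \eqref{rzformula_2}, then propagate PDE's down the chain), but with one genuine and welcome divergence: for \eqref{eqQ^{(2)}} the paper applies $\wh{\Delta}$ to the reformulated relation $\lambda Q^{(2)}=\partial_V P^{(2)}+nP^{(3)}+P^{(1)}$, which forces a long computation involving the commutators \eqref{comm_1}--\eqref{comm_2} and almost every other equation; you instead apply \eqref{rzformula_1} directly with $f=Q^{(1)}$, $g=Q^{(2)}$, substitute \eqref{eqQ^{(1)}}, and use $\partial_V Q^{(1)}=(n-1)(\lambda-n)R-nQ^{(2)}-S$. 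I checked that your route works: collecting the $r^{-2}Q^{(2)}$ coefficients $-(\lambda-n+2)+(n-2)-2n=-(\lambda+4)$ reproduces \eqref{eqQ^{(2)}} exactly, and it is markedly shorter than the paper's argument. Your treatment of \eqref{eqP^{(1)}}--\eqref{eqP^{(3)}}, \eqref{eq2S} and the plan for \eqref{eq4R} mirror the paper. One small omission: the lemma gives two \emph{a priori} independent definitions of $Q^{(2)}$ (as $-\tfrac{z}{r}Q^{(1)}$ in \eqref{defQ^{(2)}} and implicitly via \eqref{def2Q^{(2)}}), and your argument uses both; the paper explicitly verifies their consistency using \eqref{comm_3} before proceeding, and you should do the same. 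Also, you have only sketched \eqref{eq4R} ("should cancel") — that step does require the full algebraic bookkeeping, but your strategy for it is the correct one.
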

\begin{proof}
	By \eqref{rzformula_2}, \eqref{defQ^{(1)}}-\eqref{def3R} are equivalent to
	\begin{align}
\label{newdefQ^{(1)}}	\partial_VP^{(1)}&=\lambda\cdot Q^{(1)}-(n-1)P^{(2)},\\
\label{newdef2Q^{(2)}}	\partial_VP^{(2)}&=\lambda\cdot Q^{(2)}-(n-1)P^{(3)}+nS,\\
\label{newdef3R}	\partial_VQ^{(1)}&=(n-1)(\lambda-n)R-nQ^{(2)}-S,
	\end{align}
	respectively. Let us first check that \eqref{defQ^{(2)}} and\eqref{newdefQ^{(1)}} are consistent with \eqref{newdef2Q^{(2)}}.
	By multiplying \eqref{newdefQ^{(1)}} with $-z r^{-1}$ and using \eqref{defP^{(3)}} and \eqref{defQ^{(2)}}, we get
	\begin{align*}
	\lambda\cdot Q^{(2)}-(n-1)P^{(3)}=-z r^{-1}\partial_VP^{(1)}&=\partial_V(-z r^{-1}P^{(1)})+[V,z r^{-1}](P^{(1)})\\
	&=\partial_VP^{(2)}+(1+z^2\cdot r^{-2})P^{(1)}\\
	&=\partial_VP^{(2)}+P^{(1)}+P^{(3)}=\partial_VP^{(2)}-nS,
	\end{align*}
	which yields \eqref{newdef2Q^{(2)}}. Now, \eqref{eqP^{(1)}}-\eqref{eqQ^{(1)}} follow already from Lemma \ref{formulas1}, as the relation between $P^{(1)},P^{(2)}$ and $Q^{(1)}$ is the same as between $P,Q$ and $R$ in Lemma \ref{formulas1}. To prove \eqref{eqP^{(3)}}, we use \eqref{rzformula_1}, \eqref{eqP^{(2)}} and \eqref{newdef2Q^{(2)}} to compute
	\begin{align*}
	\wh{\Delta}P^{(3)}&=-z r^{-1}\wh{\Delta}P^{(2)}+(n-2)r^{-2}P^{(3)}+2r^{-2}\partial_VP^{(2)}\\
	&=-z r^{-1}(-(\lambda+n)r^{-2}P^{(2)}+2\lambda r^{-2}Q^{(1)})+(n-2)r^{-2}P^{(3)}\\
	&\qquad+2r^{-2}(\lambda\cdot Q^{(2)}-(n-1)P^{(3)}+nS)\\
	&=-(\lambda+2n)r^{-2}P^{(3)}+4\lambda r^{-2}Q^{(2)}+2nr^{-2}S,
	\end{align*}
	where we also used \eqref{defP^{(3)}} and \eqref{defQ^{(2)}} in the last equality. To get \eqref{eq2S}, we apply $\wh{\Delta}$ to \eqref{defS} and use \eqref{eqP^{(1)}} and \eqref{eqP^{(3)}} which yields
	\begin{align*}
	n\wh{\Delta}S&=-\wh{\Delta}P^{(1)}-\wh{\Delta}P^{(3)}
	\\&=\lambda r^{-2}P^{(1)}+(\lambda+2n)r^{-2}P^{(3)}-4\lambda r^{-2}Q^{(2)}-2nr^{-2}S\\
	&=-n(\lambda+2)r^{-2}S+2nr^{-2}P^{(3)}-4\lambda r^{-2}Q^{(2)},
	\end{align*}
	where we also used \eqref{defS} in the last step. For the computation of \eqref{eqQ^{(2)}}, we apply $\wh{\Delta}$ to \eqref{newdef2Q^{(2)}} to get
	\begin{align*}
	\lambda\wh{\Delta}Q^{(2)}&=\wh{\Delta}\partial_VP^{(2)}+n\wh{\Delta}P^{(3)}+\wh{\Delta}P^{(1)}\\
	&=\partial_V(\wh{\Delta}P^{(2)})+[\wh{\Delta},\partial_V]P^{(2)}+n(4\lambda r^{-2}Q^{(2)}+2nr^{-2}S-(\lambda+2n)r^{-2}P^{(3)})-\lambda r^{-2}P^{(1)}\\
	&=\partial_V(-(n+\lambda)r^{-2}P^{(2)}+2\lambda r^{-2}Q^{(1)})-n r^{-2}\partial_VP^{(2)}\\
	&\qquad +n(4\lambda r^{-2}Q^{(2)}+2nr^{-2}S-(\lambda+2n)r^{-2}P^{(3)})-\lambda r^{-2}P^{(1)}\\
	&=[V,r^{-2}](-(n+\lambda)P^{(2)}+2\lambda Q^{(1)})-(n+\lambda)r^{-2}\partial_VP^{(2)}+2\lambda r^{-2}\partial_VQ^{(1)}-nr^{-2}\partial_VP^{(2)}\\
	&\qquad +n(4\lambda r^{-2}Q^{(2)}+2nr^{-2}S-(\lambda+2n)r^{-2}P^{(3)})-\lambda r^{-2}(P^{(3)}+nS)\\
	&=2zr^{-3}(-(n+\lambda)P^{(2)}+2\lambda Q^{(1)})-(2n+\lambda)r^{-2}\partial_VP^{(2)}+2\lambda r^{-2}\partial_VQ^{(1)}\\
	&\qquad +n(4\lambda r^{-2}Q^{(2)}+2nr^{-2}S-(\lambda+2n)r^{-2}P^{(3)})-\lambda r^{-2}(P^{(3)}+nS)\\
	&=2(n+\lambda)r^{-2}P^{(3)}-4\lambda r^{-2}Q^{(2)}-(2n+\lambda)r^{-2}(\lambda Q^{(2)}-(n-1)P^{(3)}+nS)\\
	&\qquad +2\lambda r^{-2}((n-1)(\lambda-n)R-nQ^{(2)}-S)\\
	&\qquad+n(4\lambda r^{-2}Q^{(2)}+2nr^{-2}S-(\lambda+2n)r^{-2}P^{(3)})-\lambda r^{-2}(P^{(3)}+nS)\\
	&=-\lambda(\lambda+4)r^{-2}Q^{(2)}+2\lambda r^{-2}P^{(3)}-2\lambda r^{-2}S+2\lambda(n-1)(\lambda-n)r^{-2}R,
	\end{align*}
	where we used the commutation relations \eqref{comm_1},\eqref{comm_2}, the definitions \eqref{defP^{(3)}},\eqref{defQ^{(2)}} and \eqref{defS}, the reformulated definitions \eqref{newdefQ^{(1)}} and \eqref{newdef3R} and the equations \eqref{eqP^{(1)}},\eqref{eqP^{(2)}} and \eqref{eqP^{(3)}}.
	Finally, to prove \eqref{eq4R}, we apply $\wh{\Delta}$ to \eqref{newdef3R} to obtain
	\begin{align*}
	(n-1)(\lambda-n)\wh{\Delta}R&=\wh{\Delta}\partial_VQ^{(1)}+n\wh{\Delta}Q^{(2)}+\wh{\Delta}S\\
	&=\partial_V(\wh{\Delta}Q^{(1)})+[\wh{\Delta},\partial_V]Q^{(1)}+n\wh{\Delta}Q^{(2)}+\wh{\Delta}S\\
	&=\partial_V(2r^{-2}P^{(2)}-(\lambda+2-n)r^{-2}Q^{(1)})-nr^{-2}\partial_VQ^{(1)}+n\wh{\Delta}Q^{(2)}+\wh{\Delta}S\\
	&=[V,r^{-2}](2P^{(2)}-(\lambda+2-n)Q^{(1)})+2r^{-2}\partial_VP^{(2)}\\
	&\qquad-(\lambda+2)\partial_VQ^{(1)}+n\wh{\Delta}Q^{(2)}+\wh{\Delta}S\\
	&=2zr^{-3}(2P^{(2)}-(\lambda+2-n)Q^{(1)})+2r^{-2}\partial_VP^{(2)}\\
	&\qquad-(\lambda+2)\partial_VQ^{(1)}+n\wh{\Delta}Q^{(2)}+\wh{\Delta}S\\
	&=-4r^{-2}P^{(3)}ü2(\lambda+2-n)r^{-2}Q^{(2)}+2r^{-2}(\lambda Q^{(2)}-(n-1)P^{(3)}+nS)\\
	&\qquad -(\lambda+2)r^{-2}((n-1)(\lambda-n)R-nQ^{(2)}-S)\\
	&\qquad +n(-(\lambda+4)r^{-2}Q^{(2)}+2r^{-2}P^{(3)}-2r^{-2}S+2(n-1)(\lambda-n)r^{-2}R)\\
	&\qquad -(\lambda+2)r^{-2}S+2 r^{-2}P^{(3)}-4 n^{-1}\lambda r^{-2}Q^{(2)}\\
	&=(n-1)(\lambda-n)[(2n-\lambda-2)r^{-2}R+4n^{-1}r^{-2}Q^{(2)}],
	\end{align*}
		where we used the commutation relations \eqref{comm_1},\eqref{comm_2}, the definitions \eqref{defP^{(3)}} and \eqref{defQ^{(2)}}, the reformulated definitions \eqref{newdef2Q^{(2)}} and \eqref{newdef3R} and the equations \eqref{eqQ^{(1)}},\eqref{eq2S} and \eqref{eqQ^{(2)}}.
\end{proof}

\end{document}